\documentclass[a4paper]{amsart}

\usepackage[T1]{fontenc}
\usepackage[latin1]{inputenc}
\usepackage[backend=biber,style=numeric, doi=false,isbn=false,url=false, maxnames=10, minnames = 9]{biblatex}
\usepackage[shortlabels,inline]{enumitem}
\usepackage{amsfonts}
\usepackage{amssymb}
\usepackage{amsmath}
\usepackage{amsthm}
\usepackage{faktor}
\usepackage[x11names,dvipsnames]{xcolor}
\usepackage{graphicx}%
\usepackage{tikz-cd}
\usepackage{tikz}\usetikzlibrary{nfold}
 
\usepackage[pdftex,linktocpage,breaklinks]{hyperref}
\hypersetup{
colorlinks=true,allcolors=blue,
linkcolor=blue,
citecolor=red,
anchorcolor=blue,
urlcolor=blue,
runcolor=red,
filecolor=red
pdfborder=0 0 0,
}
\PassOptionsToPackage{unicode}{hyperref}
\usepackage{algorithm}
\usepackage{algpseudocode}

\numberwithin{equation}{section} 

\theoremstyle{plain}
\newtheorem{FactCounter}{dummy}[section]
\newtheorem{Theorem}[FactCounter]{Theorem} %

\newtheorem{Proposition}[FactCounter]{Proposition} %
\newtheorem{Lemma}[FactCounter]{Lemma} %
\newtheorem{Corollary}[FactCounter]{Corollary} %
\theoremstyle{definition}
\newtheorem{Definition}[FactCounter]{Definition} %
\theoremstyle{remark}
\newtheorem{Remark}[FactCounter]{Remark} %
\newtheorem{Example}[FactCounter]{Example} %
\newcommand\operator[1]{\mathop{\operatorname{#1}}\nolimits}

\newcommand{\source}{\textbf{s}}
\newcommand{\range}{\textbf{r}}
\DeclareMathOperator{\lcm}{lcm}
\newcommand{\supp}{\operator{supp}}
\newcommand{\Stab}{\operator{Stab}}
\newcommand{\NN}{\mathbb{N}}
\newcommand{\CC}{\mathbb{C}}
\newcommand{\ZZ}{\mathbb{Z}}
\newcommand{\MatZ}{\text{M}_{N}(\ZZ)}
\newcommand{\MatN}{\text{M}_{N}(\NN)}

\newcommand{\TightIdeal}{{T}_{K}}
\newcommand{\SingularIdeal}{{I}_{K}}
\newcommand{\SingIdeal}{\SingularIdeal}

\newcommand{\SingularNotTightIdeal}{\SingularIdeal\setminus\TightIdeal}
\DeclareMathOperator{\Support}{\text{supp}}
\newcommand{\G}{\mathcal G}
\newcommand{\LongComment}[1]{\Comment{\parbox[t]{4cm}{#1}}}
\renewcommand{\int}{\operatorname{int}}
\newcommand{\onto}{\twoheadrightarrow}
\renewcommand{\hat}{\widehat}

\title{On the simplicity of Katsura algebras}

\author[Josiah Aakre]{Josiah Aakre}
\address{Department of Mathematics, University of Manchester, Manchester M13 9PL, United Kingdom.}
\email{josiah.aakre@postgrad.manchester.ac.uk}

\author[N\'ora Szak\'acs]{N\'ora Szak\'acs}
\address{Department of Mathematics, University of Manchester, Manchester M13 9PL, United Kingdom.}
\email{nora.szakacs@manchester.ac.uk}

\subjclass[2020]{46L55, 20M25, 20M18}

\keywords{Katsura algebras; Self-similar groupoids; Simplicity}

\begin{document}

\maketitle
\begin{abstract}
	 We give a complete characterization of the (purely infinite) simplicity of Katsura algebras and the associated Steinberg algebras. This is achieved by characterizing when the singular ideals vanish via the self-similar groupoid model derived from Exel and Pardo. Analogous results are given for the algebras arising from the faithful quotient of the self-similar action. Finally, we describe polynomial-space algorithms to determine if each singular ideal vanishes and provide the first non-Hausdorff examples of non-contracting self-similar groupoids for which simplicity is algorithmically decidable.
\end{abstract}

\section{Introduction}

A key milestone in the classification theory of $C^{\ast}$-algebras is the Kirchberg-Phillips theorem, stating that all nuclear, separable, purely infinite simple $C^{\ast}$-algebras in the UCT class -- \emph{UCT Kirchberg algebras} for short -- are classified by their $K$-theory. 
In an influential paper \cite{Katsura}, Katsura defined a family of $C^{\ast}$-algebras $\mathcal O_{A,B}$ built from a pair of integer matrices $A$ and $B$ which contains all UCT Kirchberg algebras, moreover, the $K$-theory of $\mathcal O_{A,B}$ is easily computed from the matrices $A$ and $B$. The algebra $\mathcal O_{A,B}$ is unital exactly if $A, B$ are finite dimensional. While all Katsura algebras are nuclear, separable and in the UCT class, they are not always purely infinite simple, and the main goal of this paper is to characterize when this happens in terms of the matrices $A$ and $B$, focusing on the unital case.

Exel and Pardo realized \cite{ExParSSGraphs} that unital Katsura algebras can be viewed as $C^{\ast}$-algebras associated to self-similar actions of $\mathbb Z$ on the path spaces of finite graphs. 
This led them to introduce the notion of self-similar group actions on graphs,
and associated $C^{\ast}$-algebras, which generalize $C^{\ast}$-algebras of self-similar groups introduced by Nekrashevych \cite{Nek05SSGroups} in the same way that graph $C^{\ast}$-algebras generalize Cuntz algebras. This is further extended by \cite{LACA2018268} where the acting group is more generally replaced by a (discrete) groupoid. In this setting, Katsura algebras arise from a self-similar action of a bundle of infinite cyclic groups, termed Katsura-Exel-Pardo- or \emph{KEP-groupoids} for short, and this has now become the standard framework in which they are studied \cite{HUME_WHITTAKER_2025,ortega2020homology, NylandOrtegaKEP, miller2025homologyktheoryselfsimilaractions}.

Crucially, the approach of Exel and Pardo also gave Katsura algebras an \emph{ample groupoid} model (not to be confused with the self-similar groupoid mentioned above). The simplicity of nuclear Hausdorff groupoid $C^{\ast}$-algebras is well-understood, and was used in \cite[Section 18]{ExParSSGraphs} to study when $\mathcal O_{A,B}$ is purely infinite simple, obtaining a full classification in the Hausdorff case.

Katsura algebras do not always come from Hausdorff groupoids however, and simplicity in the non-Hausdorff setting is much more challenging to understand, the difficulty being that the $C^{\ast}$-algebra contains non-continuous functions. A general theory of non-Hausdorff simplicity developed in \cite{ClarkExelPardoSimsStarling2019} identifies the so-called \emph{singular ideal} as the main obstacle. 
Despite much subsequent research towards understanding when the singular ideal of a groupoid $C^{\ast}$-algebra vanishes, this remains a difficult property to pin down, and no characterization exists which is easy to verify on concrete examples. 

A significant breakthrough in \cite{brix2025hausdorffcoversnonhausdorffgroupoids} associates to each non-Hausdorff groupoid a `Hausdorff cover', providing a bridge from the non-Hausdorff setting to the Hausdorff case. Using this bridge, \cite{hume2025characterizationszerosingularideal, gonzales2026densesubalgebrassingularideal} show that in a large class of ample groupoids, the singular ideal is detected by a dense $*$-subalgebra called the (complex) Steinberg algebra of the groupoid, and in particular the simplicity of the Steinberg algebra and the $C^{\ast}$-algebra coincide. In explicit examples, the most successful approach to obtain a characterization of $C^{\ast}$-simplicity has been via the complex Steinberg algebra \cite{Gardella2025, AAKRE2026130446}. 

Steinberg algebras of ample groupoids can be defined over any field, and have been extensively studied in their own right, in particular they encompass all inverse semigroup algebras. Curiously, their simplicity can depend on the characteristic of the field \cite{ClarkExelPardoSimsStarling2019, SteSzaEtale}. Utilizing the results of \cite{gonzales2026densesubalgebrassingularideal}, the simplicity of Katsura algebras can be determined via the associated complex Steinberg algebra. Nevertheless, our methods extend seamlessly to Steinberg algebras over any field, so we frame our work in this broader setting.

Our main result about unital Katsura algebras is Theorem \ref{Thm:unfaithful J=0}, which gives a complete characterization of when the singular ideal of the Steinberg algebra vanishes. This is independent of the field and equivalent to the singular ideal vanishing in the Katsura algebra. We apply the result to obtain a complete characterization of simplicity of Katsura algebras in Corollary \ref{Cor: nonfaithful simplicity main result}. In Section \ref{Sec: complexity analysis-faithful}, we show that the conditions of Theorem \ref{Thm:unfaithful J=0} are effectively decidable, namely in polynomial space on the input $A, B$. By known results it follows that the simplicity of the Katsura algebra (and equivalently, any Steinberg algebra) is also decidable in polynomial space.

The self-similar action of a KEP-groupoid may fail to be faithful, in which case we refer to the corresponding quotient as the \emph{faithful KEP-groupoid}. The associated $C^{\ast}$-algebra is a natural quotient of the Katsura algebra, and was first studied in \cite{HUME_WHITTAKER_2025}. It turns out that the singular ideal of the Katsura algebra always arises from the non-faithfulness of the action, in particular, whenever the action is faithful and the associated ample groupoid is minimal and topologically free, the Katsura algebra is simple.

This naturally motivates the study of when \emph{faithful KEP-groupoid} ($C^{\ast}$)-algebras have a vanishing singular ideal. This turns out more difficult to characterize, but we obtain analogous results to the non-faithful case. Theorem \ref{Thm: testing singular elements faithful} provides a characterization of when the singular ideal of the Steinberg algebra vanishes, which again is independent of the field and equivalent to the vanishing of the singular ideal in the $C^*$-algebra. This is applied to obtain our simplicity result for faithful KEP-groupoid ($C^{\ast}$)-algebras in Corollary \ref{Cor: faithful simplicity main result}. The singular ideal characterization is again decidable in polynomial space on the input $A,B$ (Section \ref{Sec: complexity analysis-unfaithful}), and thus by known results, so is the simplicity. We remark that in the setting of finite matrices, simplicity implies pure infiniteness for both Katsura algebras and their faithful quotients.

In both cases, the singular ideal of the Steinberg algebra is studied by viewing it as the tight quotient of an inverse semigroup algebra, and using the characterization of the singular ideal in this language given in \cite{SteSzaEtale}. We also utilize ideas from \cite{Gardella2025} and \cite{AAKRE2026130446}, which study the simplicity of algebras associated to \emph{contracting} self-similar actions. KEP-groupoid actions are typically not contracting, and as such comprise the first family of non-contracting self-similar actions where simplicity is explicitly characterized. 

The paper is organized as follows. In Section 2, we review some preliminary results. Section 3 shows that the singular ideal of both the `usual' KEP-groupoids and their faithful quotients is detected by the complex Steinberg algebra. Section 4 proves a number of statements about KEP-groupoids which are useful in both the faithful and non-faithful settings, in particular we show that the faithful quotient of a KEP-groupoid can be computed in polynomial space on input $A,B$, which may be of independent interest.
Sections 5 and 6 outline the theoretical underpinnings required to decide the vanishing of the singular ideal in the `usual' and faithful cases respectively. Section 7 contains the formal algorithms and complexity analysis. Section 8 is dedicated to examples, in particular we give examples of KEP-groupoids as well as faithful KEP-groupoids with nontrivial singular ideals, which seems to be new in the literature. We also demonstrate that there is no implication between the singular ideal vanishing in a Katsura algebra and in its faithful quotient.

\section{Preliminaries}
\subsection{Graphs and self-similar groupoids}
Let $\mathbb{N}$ denote the nonzero natural numbers. Throughout, let $N\in \NN$, and let $[1,N]:=\{1,2, \ldots, N\}$. Let $\MatN$ and $\MatZ$ be the set of $N\times N$ matrices with entries in $\NN$ and $\ZZ$ respectively. 

In this paper, a directed graph $E=(E^0, E^1, \range, \source)$ consists of a finite set of vertices $E^0$, a finite set of edges $E^1$, and maps $\range,\source: E^1\rightarrow E^0$ assigning the range and source of each edge. Following the usual convention in dynamical systems, a path $p=e_1\ldots e_n$ in $E$ is a sequence of edges in $E^1$ such that $\source(e_{i})=\range(e_{i+1})$ for $1\leq i < n$.  We denote the length of a path $p$ by $|p|$, and the set of all paths in $E$ of length $n$ is denoted by $E^n$. We consider vertices to be paths of length 0, called empty paths. The set of all finite length paths is denoted by $E^*$ and the range and source maps extend to $E^*$ by defining $\range(e_1\ldots e_n) = \range(e_1)$ and $\source(e_1\ldots e_n) = \source(e_n)$, or in the case of $v\in E^0$, we set $\range(v)=v=\source(v)$. A cycle is a path $p$ satisfying $|p|>0$ and $\range(p)=\source(p)$. A path is simple if it does not contain a cycle as a subsequence of edges. A cycle is simple if it does not contain a cycle as a proper subsequence of edges. Path concatenation is a natural partial operation on $E^*$, and whenever we write $pq\in E^*$, it is implied that $p,q\in E^*$ and $\source(p)=\range(q)$. Similarly, for $p,q \in E^*$, we write
\[pE^*:= \{pw \colon w \in E^*, \source(p)=\range(w)\},\]
\[E^*q:= \{wq \colon w \in E^*, \source(w)=\range(q)\}.\]
Vertices are idempotent under path concatenation, and for any path $p\in E^*$, we have $p= \range(p)p\source(p)$.

We denote the set of infinite paths with range but no source by
\[E^{\omega}=\{e_{1}e_{2}e_{3}\ldots \colon e_i\in E^1, \source(e_{i})=\range(e_{i+1})\text{ for all }i\},\]
and analogously define $pE^{\omega}$ for $p\in E^*$.
Define $E^{-1}=\{e^{-1} \colon  e\in E^1\}$ to be a set in bijection with (but disjoint from) $E^1$.

In the paper, graphs will be given by their adjacency matrices (again we follow the convention of dynamical systems, and consider the transpose of the usual adjacency matrix in graph theory). Given a matrix $A\in \MatN$, let $E_A$ be a directed graph with vertices $[1,N]$ and for each $u,v\in[1,N]$, directed edges $e_{u,v,r}$ for each $0\leq r < A_{u,v}$. The incidence is given by $\range(e_{u,v,r})=u$ and $\source(e_{u,v,r})=v$. 

Recall that a groupoid is a small category in which every morphism has an inverse. The objects of a groupoid $G$ are denoted by $G^{0}$ and are sometimes referred to as units. For each object $v$, the set of isomorphisms of $v$ forms a group called the \emph{isotropy group} of $v$, denoted by $(G)_v$. Identifying each object of $G$ with the identity of its isotropy group, we view $G^{0}$ as a subset of $G$. A groupoid is called a \emph{group bundle} if it is the union of its isotropy groups.

\emph{Self-similar groupoids}, first defined in \cite{LACA2018268} as generalizations of self-similar groups, are groupoids acting on the path space of (usually finite) graphs. Given a directed graph $E$, a self-similar groupoid $(G, E)$ consists of a groupoid $G$ with object set identified with $E^0$, equipped with an action of $G$ on $E^*$ such that for all $e\in E^1$ and $g\in G\range(e)$, there exists a unique $g|_e\in G$ such that
\[g(ep) = g (e)g|_e(p),\]
for all $p\in \source(e)E^*$. Note that we do not assume the action to be faithful. We call the element $g|_e$ the section of $g$ at $e$. Sections may be extended to finite paths by recursively defining
\[g|_{ep}=(g|_e)|_p,\]
for all $ep\in E^*$. 

\smallskip

\textbf{Standing assumption:} throughout the paper, we will assume that the graph $E$ is finite as is suitable for our algorithmic methods. Because the graph is finite, all algebras considered are unital. Everywhere except Section \ref{Subsec: addressing sources} we also assume $E$ has no sources.

\smallskip

\subsection{The simplicity of inverse semigroup and ample groupoid algebras}
\label{Subsec: semigroup and groupoids algebras}

Recall that a semigroup $S$ is called inverse if for every element $s \in S$ there is a unique element $s^*$ such that $ss^*s=s$ and $s^*ss^*=s^*$.
Every self-similar groupoid gives rise to an inverse semigroup which we denote by $S_{(G,E)}$, which is generated by $E^0\cup E^1\cup E^{-1}\cup G$ together with a zero $0$. We set $g^*:=g^{-1}$, $e^*:=e^{-1}$ and $v^*:=v$ for $g\in G$, $e\in E^1$, and $v\in E^0$. The operation of $S_{(G, E)}$ extends that of $G$ by defining any undefined products in $G$ as 0. The following additional relations are imposed:
\begin{enumerate}
    \item $e\source(e) = \range(e)e = e$ for all $e\in E^0\cup E^1\cup E^{-1}$;
    \item $uv=0$ for distinct $u,v\in E^0$;
    \item $e^* f=0$ for distinct $e,f\in E^1$;
    \item $e^* e= \source(e)$ for $e\in E^1$;
    \item $ge = g(e)g|_e$ for all $g\in G$ and $e\in \source(g)E^1$.
\end{enumerate}
By a formal induction, one can extend (5) for paths: $gp = g(p)g|_p$ for all $g\in G$ and $p\in \source(g)E^*$.
It can easily be shown that non-zero elements of $S_{(G, E)}$ can be uniquely expressed as $pgq^*$ with $p,q\in E^*$, $\source(p)=\source(q)$, and $g\in G\source(p)$, which we refer to as the normal form. In particular, we view $E^* \cup \{0\}$ as a submonoid of $S_{(G,E)}$.

Extending the action of $G$, the inverse semigroup $S_{(G, E)}$ acts naturally on both $E^*$ and $E^{\omega}$ by partial bijections, 
given by
\begin{align*}
	pgq^* \colon qE^* \cup qE^\omega &\to pE^* \cup pE^\omega,\\
qw &\mapsto pg(w).
\end{align*}

Recall that a topological groupoid is a groupoid where all operations (including the source and range maps) are continuous. A groupoid is called \emph{étale} if the unit space is locally compact and Hausdorff, and the source and range maps are local homeomorphisms. An \emph{ample groupoid} is an étale groupoid with a totally disconnected unit space.

The inverse semigroup $S_{(G, E)}$ gives rise to the ample groupoid $\G_{(G, E)}$ as its tight groupoid (in the sense of Exel \cite{ExelCombinatorialalgebras}). Explicitly,
the groupoid $\G_{(G, E)}$ has unit space $E^\omega$ (equipped with the topology generated by cones $pE^\omega$ for $p  \in E^*$)\footnote{Here we use the fact that $E$ has no sources.}, and morphisms
\[\faktor{\{(s,w)\in S\times E^{\omega} \colon  0\neq s=pgq^*, w\in qE^{\omega}\}}{\sim},\]
where $(s,w)\sim(t,z)$ if and only if $w=z$ and there exists some prefix $v$ of $w$ such that $sv=tv$. The $\sim$-equivalence class of $(s,w)$ is denoted by $[s,w]$ and is also called the germ of $(s,w)$. The partial multiplication of $\G_{(G, E)}$ is given by $[s,t(w)][t,w]=[st,w]$, the inverse given by $[s,w]^{-1}=[s^*,s(w)]$, and the source map given by
$\source([s,w])=w$, which we can naturally identify with the identity morphism $[s^*s,w]$. The topology is the quotient topology inherited from $G \times E^\omega$, where $G$ is discrete.

In \cite{STEINBERG2010689}, Steinberg introduced the algebraic analogue of groupoid $C^{\ast}$-algebras, now termed \emph{Steinberg algebras}. We do not require the exact definition here; it suffices to know that the Steinberg algebra $K\G$ over the field $K$ consists of certain compactly supported (but not necessarily continuous) $\G \to K$ functions. The \emph{(reduced) $C^{\ast}$-algebra $C_r^*(\G)$ of $\G$} (in the sense of Paterson \cite{paterson1999groupoids}) can be obtained as the completion of $\mathbb C \G$ in reduced norm. (If $\G$ is amenable, as is the case for the groupoids we consider, then the reduced norm coincides with the universal norm.) 
Elements of $C_r^*(\G)$ can also be seen as $\G \to \mathbb C$ functions via Renault's $j$-map \cite{renault1980groupoid}. Given a function $f \colon \G \to K$, we denote its \emph{strict support} by $\supp(f)$.
We recall the following, important theorems characterizing the simplicity of $K \G$ and $C_r^*(\G)$.

\begin{Theorem}[{\cite[Thm. 4.16]{SteinbergandNora2023}, \cite[Thm. 7.26]{KwasniewskiMeyer2021}, see also \cite[Thm. 4.10]{ClarkExelPardoSimsStarling2019}}]\label{Thm: ample groupoid simplicity characterisations}
Let $\G$ be an amenable ample groupoid. Then
\begin{enumerate}
	\item The algebra $K\G$ is simple if and only if $\G$ is minimal, topologically free, and the ideal 
	$$J_K:=\{f \in K\G \colon \int(\supp(f))=\emptyset\}$$
	vanishes.
	\item The $C^{\ast}$-algebra $C^{\ast}_r(\G)$ is simple if and only if $\G$ is minimal, topologically free, and the ideal 
	$$J:=\{f \in C^{\ast}_r(\G) \colon \int(\supp(f))=\emptyset\}$$
	vanishes.
\end{enumerate}
\end{Theorem}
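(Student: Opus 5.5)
This is a cited characterization (Steinberg--Szakács, Kwaśniewski--Meyer, Clark--Exel--Pardo--Sims--Starling), so I would assemble it from the general theory rather than prove it from scratch. The plan treats the two parts in parallel, and in each case proves the two directions separately.

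\emph{Necessity.} I would first show that $J_K$ (respectively $J$) is a two-sided ideal. This holds because multiplication by elements supported on compact open bisections moves the strict support around by partial homeomorphisms, and these preserve the property of having empty interior. The ideal is proper because it does not contain the unit. Hence simplicity forces $J_K=0$ (respectively $J=0$).

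Minimality is next. If $U\subseteq \G^0$ is a nontrivial open invariant set, then the functions supported on $\G|_U$ generate a proper nonzero ideal; properness is checked by restricting to the closed invariant complement.

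Topological freeness comes last. If it fails, there is a compact open bisection $B$ inside the isotropy with $B\not\subseteq \G^0$. Then I would use $1_B-1_{s(B)}$ together with the fact that the restriction map to isotropy, or equivalently the representations induced from isotropy groups at points of the interior, produces a nontrivial kernel. In the $C^*$ case amenability is what lets me pass between the reduced and full pictures here.

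\emph{Sufficiency.} I would use the standard ``ideal intersection property'' argument. Let $I$ be a nonzero ideal and pick $0\neq f\in I$. Since $J_K=0$, the open set $\int(\supp(f))$ is nonempty. Topological freeness then gives a point $x$ in that interior with trivial isotropy at which $f$ is continuous. Using compact open bisections, I would cut down $f$ by $1_V f 1_V$ for a small clopen neighbourhood $V$ of $x$, giving a nonzero element of $I\cap K\G^0$ that is locally a multiple of $1_V$. Minimality then lets finitely many translates of $V$ cover the compact unit space, so $1\in I$.

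The main obstacle is the cutting-down step in the non-Hausdorff setting. Elements of $K\G$ need not be continuous, so a point of trivial isotropy must be chosen where $f$ behaves well. To handle this I would invoke the dense set of continuity points of $f$, which comes from a Baire-category argument, and intersect it with the dense set of points with trivial isotropy. In the $C^*$ case the same idea goes through via the conditional expectation onto $\ell^\infty$-type functions on $\G^0$ (Kwaśniewski--Meyer's essential expectation), where $J$ is exactly the kernel of that expectation. Since the statement is quoted, I would ultimately cite \cite{SteinbergandNora2023} and \cite{KwasniewskiMeyer2021} for these details.
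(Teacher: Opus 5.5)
Your plan agrees with the paper. The paper gives no proof of this theorem and simply quotes it from the cited works, so deferring to those references is exactly the intended justification. If you keep the heuristic sketch, fix two points: $J$ is not the kernel of the essential expectation $E$, which is not an ideal (it contains $1_U$ for any compact open bisection $U$ missing $\G^0$, but not $1_{U^{-1}}1_U$); it is the largest ideal inside that kernel, i.e.\ the kernel of $C^*_r(\G)\to C^*_{\mathrm{ess}}(\G)$. Also, in the Steinberg case no continuity points are needed, since $1_Vf1_V=f(x)1_V$ for a small clopen $V\ni x$ at any unit $x$ with trivial isotropy; the genuinely non-Hausdorff input is that $J_K=0$, after translating $\int(\supp(f))$ into $\G^0$ by a bisection, supplies such an $x$ with $f(x)\neq 0$.
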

(We remark that only the `left-to-right' direction of item (2) uses amenability, and in fact $C_r^*(\G)=C^{\ast}(\G)$ is sufficient.)
The ideals $J_K$ and $J$ are called the ideals of singular functions, or \emph{singular ideals}. Given a groupoid, whether $J_K=0$ can depend on the characteristic of the field $K$, but it is an open question if $J_{\mathbb C}=0$ and $J=0$ are equivalent. 

For ample groupoids associated to self-similar actions, minimality and topological freeness are well understood. Our focus is on self-similar groupoids $(G,E)$ where $E$ is a finite graph with no sources and $G$ is a bundle of groups. In this setting, minimality of $\G_{(G,E)}$ is characterized by the following condition \cite[Prop. 6.32]{MundKwaTwisOpAlgSSG}:
\begin{itemize}
	\item [(Cof)] Each cycle of $E$ is in the same strongly connected component.
\end{itemize}
Topological freeness is equivalent to the conjunction of the following two conditions \cite[Prop. 6.24]{MundKwaTwisOpAlgSSG}:
\begin{itemize}
	\item [(Evr)] If $g$ fixes every path in $\source(g)E^*$ then there is $p\in\source(g)E^*$ such that $g|_p=\source(p)$.
	\item [(Cyc)] Each cycle of $E$ has an entrance.
\end{itemize}
When $C^{\ast}(\G_{(G,E)})$ is simple, \cite[Thm. 9.35]{MundKwaTwisOpAlgSSG} also characterizes its purely infinite simplicity by the following property:
\begin{itemize}
	\item [(Con)] Every vertex of $E$ is reachable from a cycle. 
\end{itemize}
Note that since all graphs we consider are finite and without sources, (Con) is always satisfied. 
In particular, we immediately obtain the following.
\begin{Proposition}
\label{Prop: simple implies purely infinite}
Let $E$ be a finite graph with no sources and $(G,E)$ a self-similar bundle of groups. Then if $C^{\ast}(\G_{(G,E)})$ is simple, it is also purely infinite simple.
\end{Proposition}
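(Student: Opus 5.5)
The plan is to apply \cite[Thm. 9.35]{MundKwaTwisOpAlgSSG} directly, so the only work is to check its hypotheses. In the setting of that theorem, for a self-similar groupoid action on a finite graph with simple $C^{\ast}(\G_{(G,E)})$, purely infinite simplicity is characterized by condition (Con). It therefore suffices to show that (Con) always holds when $E$ is finite and has no sources. I will also confirm that our standing assumptions ($E$ finite, no sources, $G$ a bundle of groups acting self-similarly) fall within the hypotheses of \cite[Thm. 9.35]{MundKwaTwisOpAlgSSG}, in the same way as for \cite[Prop. 6.24, 6.32]{MundKwaTwisOpAlgSSG}.

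To verify (Con), fix a vertex $v\in E^0$ and build a backward path. Since $E$ has no sources, every vertex receives an edge, where we use the convention that $\range(e)$ is the head. Concretely, for each vertex $u$ there is an edge $e$ with $\range(e)=u$; with the paper's convention that a path $e_1e_2\ldots$ has $\source(e_i)=\range(e_{i+1})$, this lets us extend any finite path $p$ with $\source(p)=u$ to a longer path $pe$.

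Starting from $v$, we therefore obtain paths $e_1e_2\cdots e_n$ with $\range(e_1)=v$ for every $n$. These paths visit the vertices $\range(e_1),\source(e_1),\source(e_2),\ldots,\source(e_n)$. Once $n\geq |E^0|$, the pigeonhole principle gives indices $i<j$ with $\source(e_i)=\source(e_j)$, so the segment $e_{i+1}\cdots e_j$ is a cycle $c$. The prefix $e_1\cdots e_i$ is then a path from $\source(c)$ to $v$, meaning $v$ is reachable from a vertex on the cycle $c$. Care is needed to state "reachable from a cycle" with the paper's (transposed) direction convention, which matches the one used in \cite{MundKwaTwisOpAlgSSG}. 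If the convention is reversed, the same argument applies with the roles of range and source swapped, using absence of sources in whichever sense that paper requires.

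The main obstacle is not mathematical but conventional. I have to make sure the direction in "no sources" and in (Con) agree with \cite{MundKwaTwisOpAlgSSG}'s conventions, and that simplicity of $C^{\ast}(\G_{(G,E)})$ is the exact hypothesis of their Theorem 9.35. Once these are aligned, the result follows immediately: simplicity together with (Con) yields pure infiniteness.
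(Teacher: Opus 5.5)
Your proposal is correct and takes the same route as the paper: the paper invokes \cite[Thm. 9.35]{MundKwaTwisOpAlgSSG} and remarks that (Con) holds automatically for finite graphs without sources. Your backward-path pigeonhole argument supplies the details of that remark, and your direction convention matches the paper's, where no sources means every vertex is a range, i.e.\ $A$ has no zero rows. One small fix: to guarantee a repetition, either take $n>|E^0|$ or include $\range(e_1)=v$ in the pigeonhole list, in which case a repetition involving $v$ simply means $v$ lies on a cycle.
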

We remark that in \cite{MundKwaTwisOpAlgSSG}, (Cof), (Cyc) and (Con) take a more general form applicable to any self-similar groupoids.

Since the above conditions are well understood, the task of determining simplicity of $K\G_{(G,E)}$ or $C^{\ast}_r(\G_{(G,E)})$ reduces to deciding if $J_K$, respectively $J$, vanishes. In our setting, these each turn out to be equivalent to $J_\mathbb C=0$, further reducing the question to the algebraic setting (See Section \ref{Sec : algebraic and c* simplicity}).

Recall that we introduced $\G_{(G,E)}$ as the tight groupoid of the inverse semigroup $S_{(G,E)}$. By \cite[Cor. 2.14]{SteSzaEtale}, it follows that $K\G_{(G,E)}$ is isomorphic to the quotient of the contracted semigroup algebra $KS_{(G,E)}$ by the so-called \emph{tight ideal} $\TightIdeal$, which in this case coincides with the Cuntz-Krieger ideal generated by
$$v-\sum_{e \in\range^{-1}(v)} ee^* \ (v \in E^{0}).$$
This defines a morphism $$\varphi \colon KS_{(G,E)} \onto K\G_{(G,E)}$$
with kernel $\TightIdeal$, and the inverse image $\varphi^{-1}(J_K)$ is called the \emph{singular ideal} of the semigroup algebra, denoted $\SingIdeal$. It clearly follows that $J_K=0$ if and only if $\TightIdeal=\SingIdeal$. A key observation made in \cite[Cor. 6.4]{AAKRE2026130446} asserts that if $I_K \setminus T_K$ is non-empty, then it intersects the (discrete) groupoid algebra $KG$. 

For $$ a = \sum_{s \in S_{(G,E)}\setminus \{0\}} k_s s\in KS_{(G,E)},$$ we denote the set $\{s \in S_{(G,E)}\setminus \{0\} \colon k_s\neq 0\}$ by $\supp a$, and refer to it as the \emph{support}. Likewise, we say that a subset $X\subseteq S_{(G, E)}$  \emph{supports} $a$ or $a$ is supported on $X$ whenever $\supp a\subseteq X$.

Suppose $a \in KG \cap I_K \setminus T_K$. Observe that $a=\sum_{u\in E^0}\sum_{v\in E^0} uav$, where $uav$ is singular for all $u,v\in E^0$ (since $I_K$ is an ideal), and not tight for at least some $u,v\in E^0$ (since $T_K$ is closed under addition). Multiplying by any $g\in vGu$, we may find $guav$ is supported on the isotropy group $(G)_{v}$ and contained in $I_K \setminus T_K$.

In summary:

\begin{Proposition}
\label{prop:how_we_check_J=0}
Let $(G, E)$ be a self-similar groupoid. Then the singular ideal $J_K$ of $K\G_{(G,E)}$ is non-zero if and only if there exists some $v \in E^0$ such that $K(G)_v \cap I_K \setminus T_K \neq \emptyset,$ that is, $(G)_v$ supports elements in $KS_{(G,E)}$ which are singular but not tight.
\end{Proposition}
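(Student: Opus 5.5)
The plan is to prove both directions through the quotient map $\varphi \colon KS_{(G,E)} \onto K\G_{(G,E)}$, whose kernel is $\TightIdeal$ and which satisfies $\SingIdeal = \varphi^{-1}(J_K)$.

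For the reverse direction, suppose $a \in K(G)_v \cap \SingIdeal \setminus \TightIdeal$ for some $v\in E^0$. Then $\varphi(a) \in J_K$, because $a$ is singular. Also $\varphi(a)\neq 0$, because $a\notin \TightIdeal = \ker\varphi$. Hence $J_K \neq 0$.

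For the forward direction, assume $J_K\neq 0$. Since $\varphi$ is surjective, $\SingIdeal = \varphi^{-1}(J_K)$ strictly contains $\TightIdeal$, so $\SingIdeal\setminus\TightIdeal\neq\emptyset$.

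\begin{itemize}
\item First I invoke \cite[Cor. 6.4]{AAKRE2026130446}, which gives an element $a\in KG\cap \SingIdeal\setminus \TightIdeal$. This is the one nontrivial input, and I take it as given.
\item Next, using the relation $uv=0$ for distinct vertices $u,v$ and the fact that $\sum_{u\in E^0}u$ acts as an identity on $KG$, I decompose $a=\sum_{u,v\in E^0} uav$. Each summand $uav$ lies in $\SingIdeal$, since $\SingIdeal$ is an ideal. Because $\TightIdeal$ is closed under addition, some summand $uav$ is not in $\TightIdeal$. This summand is supported on $uGv$, the set of groupoid elements with range $u$ and source $v$ under the convention $g = \range(g)g\source(g)$.
\item Then $uGv$ is nonempty, since $uav\neq 0$. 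I pick $g\in vGu$; for instance, the inverse of any element of $\supp(uav)$ works. Left multiplication by $g$ is a bijection $uGv\to vGv=(G)_v$, so $guav$ is supported on $(G)_v$ and is nonzero. It lies in $\SingIdeal$ because $\SingIdeal$ is an ideal.
\end{itemize}

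The main obstacle is checking that $guav\notin \TightIdeal$. I handle this by left-multiplying by $g^{-1}$: we get $g^{-1}guav = uuav = uav$. If $guav$ were tight, then $uav$ would be tight as well, since $\TightIdeal$ is an ideal. That contradicts the choice of $uav$, so $guav\in K(G)_v\cap \SingIdeal\setminus\TightIdeal$, which completes the proof.
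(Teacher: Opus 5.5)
Your proposal is correct and follows essentially the same route as the paper: reduce to $KG$ via \cite[Cor. 6.4]{AAKRE2026130446}, decompose $a=\sum_{u,v}uav$, pick a non-tight summand, and move it into $(G)_v$ by multiplying by some $g\in vGu$. Your check that $guav\notin\TightIdeal$, by recovering $uav=g^{-1}guav$, and your argument for the reverse direction via $\varphi$ fill in steps the paper leaves implicit.
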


The quotient $KS_{(G,E)}/T_K$ is called the \emph{tight quotient} of $KS_{(G,E)}$, and elements of $T_K$ and $I_K$ are called \emph{tight} and \emph{singular} respectively.

We rely on the following descriptions of the tight and singular ideals of $KS_{(G,E)}$.

\begin{Proposition}[{\cite[Prop. 4.3]{AAKRE2026130446}}]
\label{prop:tight_characterization}
Let $a \in KS_{(G,E)}$. Then $a \in \TightIdeal$ if and only if every infinite path $p \in E^\omega$ has a prefix $q \in E^*$ such that $aq=0$ in $KS_{(G,E)}$.
\end{Proposition}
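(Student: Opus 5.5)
We prove both directions of Proposition \ref{prop:tight_characterization}, using that $T_K$ is the ideal generated by the Cuntz--Krieger elements $\delta_v:=v-\sum_{e\in\range^{-1}(v)}ee^*$ and that $E$ is finite with no sources.

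\emph{Sufficiency.} Suppose every $p\in E^\omega$ has a prefix $q$ with $aq=0$. By compactness of $E^\omega$, which is covered by the open cones $qE^\omega$, finitely many such prefixes $q_1,\dots,q_m$ have cones covering $E^\omega$. Passing to a common length $n\ge \max|q_i|$, every path $r\in E^n$ extends some $q_i$, so $ar=aq_i r'=0$. Iterating the Cuntz--Krieger relations shows $v\equiv\sum_{r\in vE^n} rr^* \pmod{T_K}$ for each vertex $v$, hence $1=\sum_v v\equiv \sum_{r\in E^n} rr^*$. Therefore
\[
a\equiv \sum_{r\in E^n} a r r^* = 0 \pmod{T_K},
\]
so $a\in T_K$.

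\emph{Necessity.} Since $T_K$ is spanned by elements $s\delta_v t$ with $s,t\in S_{(G,E)}$, it suffices to show that the set $A$ of all $a$ satisfying the prefix condition is an ideal containing every $\delta_v$.
\begin{enumerate}
\item $A$ is closed under addition, since we can take the longer of the two prefixes.
\item $A$ is closed under left multiplication trivially: if $aq=0$ then $saq=0$.
\item For $\delta_v$ itself: if $p$ starts at $v$ with first edge $e$, then $\delta_v e=e-e=0$ by relations (3) and (4); if $p$ starts elsewhere, then $\delta_v\,\range(p)=0$.
\end{enumerate}

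The main obstacle is closure under right multiplication by a generator $t=pgq^*$. Write $ta$ for the product and fix $w\in E^\omega$. If $w$ is incompatible with $q$, some prefix $w'$ of $w$ satisfies $q^*w'=0$, so $atw'=0$. Otherwise $w=qx$ and, taking prefixes of the form $qx'$,
\[
a\,pgq^*qx' = a\,p\,g(x')\,g|_{x'} .
\]
The sequence of prefixes $p\,g(x')$ increases along the infinite path $p\,g(x)$, since $g$ acts length- and prefix-preservingly on paths. By hypothesis some prefix $y$ of $p\,g(x)$ satisfies $ay=0$. Choosing $x'$ long enough that $p\,g(x')$ extends $y$ gives $a\,p\,g(x')=0$, and hence $atqx'=0$.

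This establishes that $A$ is an ideal containing the generators of $T_K$, so $T_K\subseteq A$, completing the proof.
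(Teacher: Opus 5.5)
Your proof is correct. The paper gives no proof of its own here; it imports the statement from \cite[Prop.~4.3]{AAKRE2026130446}, so your argument should be compared with the route the paper's setup suggests rather than with an argument in the text.

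That route goes through the groupoid model instead of the Cuntz--Krieger generators. Since $T_K=\ker\varphi$ by \cite[Cor.~2.14]{SteSzaEtale}, $a\in T_K$ iff $\varphi(a)$ vanishes on the germs over every $w\in E^\omega$. For a fixed $w$, choose a prefix $q$ of $w$ long enough that, for the finitely many $s,t\in\supp a$, two things hold: $sq=0$ exactly when $w$ is not in the domain of $s$, and $sq=tq$ exactly when $[s,w]=[t,w]$. Then $aq$ is a sum of distinct elements of $S_{(G,E)}$, one for each germ over $w$, with coefficient the sum of the $k_s$ in that germ class. So $aq=0$ iff $\varphi(a)$ vanishes on the fibre over $w$.

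What each approach buys:
\begin{enumerate}
\item The groupoid route is pointwise. It identifies your condition at each $w$ with the vanishing of $\varphi(a)$ over $w$, it needs no compactness, and it mirrors how the singular-ideal characterization is phrased.
\item Your route is purely algebraic and self-contained given that $T_K$ is generated by the elements $v-\sum_{e\in\range^{-1}(v)}ee^*$. For necessity, the prefix condition defines a two-sided ideal containing these generators, and self-similarity enters only in the right-multiplication step. For sufficiency, compactness of $E^\omega$ combines with the iterated relation $v\equiv\sum_{r\in vE^n}rr^*$ modulo $T_K$.
\end{enumerate}

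Two small points to fix. In the right-multiplication step you mean the product $at$, not $ta$; your computation is indeed of $a\,pgq^*\,qx'$. In the sufficiency step, the claim that every $r\in E^n$ extends some $q_i$ needs $r$ to prolong to an infinite path. That is exactly where the no-sources assumption is used (every vertex receives an edge), and it is worth saying so explicitly.
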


\begin{Proposition}[{\cite[Prop. 4.5]{AAKRE2026130446}}, {\cite[Prop. 4.9]{SteSzaEtale}}]
\label{prop:singular_characterization}
Let $a \in KS_{(G,E)}$. Then $a \in \SingIdeal$ if and only if for every finite path $p \in E^*$ there exists some $q \in s(p)E^*$ such that $apq=0$ in $KS_{(G,E)}$.
\end{Proposition}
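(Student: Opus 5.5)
We prove the two directions separately. Both rest on the fact that right multiplication by a path $x\in E^*$ corresponds, under $\varphi$, to restricting $\varphi(a)$ to germs whose source lies in the cone $xE^\omega$ (up to translation by the compact open bisection of $x$). Concretely, $\varphi(ax)(\gamma)=\varphi(a)(\gamma\,[x,w]^{-1})$ for $\gamma$ with source $w\in \source(x)E^\omega$, and $\varphi(ax)$ vanishes on all other germs.

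For ($\Leftarrow$), suppose every $p$ admits $q$ with $apq=0$, and suppose for contradiction that $\int(\supp\varphi(a))\neq\emptyset$. Shrink a nonempty open subset of the support to a compact open bisection of the form $\{[rgt^*,w]\colon w\in t p'E^\omega\}$. Put $p:=tp'$ and choose $q$ with $apq=0$. Then $0=\varphi(apq)$, and by the translation formula $\varphi(apq)$ takes the same values as $\varphi(a)$ on the germs of the bisection with source in $pqE^\omega$. These germs lie in the support, a contradiction. Hence $a\in\SingIdeal$.

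For ($\Rightarrow$), fix $p$ and put $b=ap\in\SingIdeal$. First replace $b$ by $bq_0$, where $q_0\in \source(p)E^*$ is longer than every $|t|$ appearing in normal forms $rgt^*$ of $\supp b$. Using $gx=g(x)g|_x$, every element of $\supp(bq_0)$ then has normal form $p_ih_i$ with $h_i\in G$; terms with $t$ not a prefix of $q_0$ vanish. Write $bq_0=\sum_i k_i p_ih_i$ with the $p_ih_i$ distinct. For each pair $i,j$, let $A_{ij}\subseteq q_0E^\omega$ be the set of $w$ with $[p_ih_i,w]=[p_jh_j,w]$. This set is open, though possibly not closed, because the groupoid need not be Hausdorff. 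Its boundary $\partial A_{ij}$ is closed and nowhere dense, so the finite union of these boundaries is nowhere dense. We can therefore pick a cone $q_0q_1E^\omega$ missing all of them. On this cone each $A_{ij}$ is either everything or empty, so the relation $i\approx j \iff A_{ij}\supseteq q_0q_1E^\omega$ is an equivalence relation that is constant over the cone.

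For each $\approx$-class $c$, the germs $\{[p_ih_i,w]\colon w\in q_0q_1E^\omega\}$ (for any $i\in c$) form a nonempty open bisection. On it, $\varphi(bq_0)$ takes the value $\sum_{i\in c}k_i$, since distinct classes give distinct germs there. Because $\varphi(bq_0)$ is singular, its support contains no open set, so $\sum_{i\in c}k_i=0$ for every class $c$.

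Finally, for $i\approx j$, the cone $q_0q_1E^\omega$ is compact and is covered by cones on which $p_ih_i$ and $p_jh_j$ agree. Hence there is a uniform length $n$ such that $p_ih_iy=p_jh_jy$ in $S_{(G,E)}$ for all $y\in q_0q_1E^*$ with $|y|\ge n$. Fix one such $y$ and set $q:=y$, which extends $q_0q_1$. Then $apq=\sum_c\bigl(\sum_{i\in c}k_i\bigr)s_c=0$ in $KS_{(G,E)}$, where $s_c$ is the common value of the elements $p_ih_iy$ for $i\in c$.

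The main obstacle is the step in the third paragraph: converting the topological statement "empty interior" into exact cancellation inside $KS_{(G,E)}$. Non-Hausdorffness means the agreement sets $A_{ij}$ need not be clopen. The nowhere-dense-boundary trick is how we pass to a cone on which the germ-equivalence pattern is constant, and I expect this to need the most care.
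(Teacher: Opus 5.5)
The paper gives no proof of its own here. It imports the statement from \cite[Prop.~4.5]{AAKRE2026130446} and \cite[Prop.~4.9]{SteSzaEtale}, where it comes from a general inverse-semigroup characterization of elements whose image in the tight quotient is singular. Your proposal is a different, direct germ-level argument in $\G_{(G,E)}$. The $(\Leftarrow)$ direction via the translation formula is correct. The plan for $(\Rightarrow)$ is also sound and self-contained: rewrite $ap\,q_0$ as $\sum_i k_i p_ih_i$, find a cone on which the germ-coincidence pattern is constant, get vanishing class sums from empty interior, then use compactness to realize the coincidences in $S_{(G,E)}$.

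However, one step fails as written. A cone $C$ that misses every $\partial A_{ij}$ need not have each $A_{ij}$ equal to all of $C$ or disjoint from it. Cones in $E^\omega$ are totally disconnected, so missing the boundary only makes $A_{ij}\cap C$ clopen in $C$. For example, if $A_{ij}$ is a proper subcone of $C$, then $\partial A_{ij}=\emptyset$, yet the pattern is not constant on $C$.

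The repair is easy and makes the boundary argument unnecessary. Run through the finitely many pairs $(i,j)$ one at a time. If the current cone meets $A_{ij}$, replace it by a subcone contained in $A_{ij}$; this is possible because $A_{ij}$ is a union of cones. Otherwise keep the cone. Both $C\subseteq A_{ij}$ and $C\cap A_{ij}=\emptyset$ survive further shrinking, so at the end every $A_{ij}$ either contains or misses $C$. Only openness of the $A_{ij}$ is used, so non-Hausdorffness is not the obstacle you anticipated.

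There is also a bookkeeping slip. Each $p_ih_i$ has domain $\source(q_0)E^\omega$, not $q_0E^\omega$. So the $A_{ij}$ and the cone $C=q_1E^\omega$ should live in $\source(q_0)E^\omega$, with $q_1\in\source(q_0)E^*$. The compactness step then yields $y\in q_1E^*$ with $p_ih_iy=p_jh_jy$ whenever $i\approx j$. The path you want is $q:=q_0y$, for which $apq=bq_0y=\sum_c\bigl(\sum_{i\in c}k_i\bigr)s_c=0$.

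With these corrections the remaining steps go through: the constancy of $\varphi(bq_0)$ on each $U_c$, the vanishing of the class sums, and the final cancellation.
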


We remark that \cite{AAKRE2026130446} defines self-similar groupoids as faithful, but this assumption is not used in the proofs of the above.

We now observe a simple necessary condition for the presence of singular elements which will be very useful later.

\begin{Proposition}\label{Prop: free orbits prevent singular elements}
	Let $H$ be a subgroup of a self-similar groupoid $(G,E)$ acting on $vE^*$, where $v \in E^0$. If the action of $H$ on $vE^\omega$ has a free orbit $O$, then $H$ does not support any non-zero singular elements in $KS_{(G, E)}$ for any field $K$.
\end{Proposition}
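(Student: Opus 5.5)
Let $a=\sum_{h\in H} k_h h$ be a non-zero element of $KS_{(G,E)}$ supported on $H$, with finite support $F=\supp a$. We show $a$ is not singular using Proposition \ref{prop:singular_characterization}: it suffices to find a finite path $p\in vE^*$ such that $apq\neq 0$ for every $q\in \source(p)E^*$. The idea is to compute $apq$ in normal form and exploit freeness of the orbit $O$ to see that distinct $h\in F$ contribute distinct basis elements, so no cancellation occurs.

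\textbf{Choosing the prefix.} Pick $w\in O\subseteq vE^\omega$. Since the orbit is free, $h(w)\neq h'(w)$ for all distinct $h,h'\in H$, in particular for the finitely many pairs in $F$. Distinct infinite paths differ at some finite position, so there is $n$ such that, writing $p$ for the prefix of $w$ of length $n$, the paths $h(p)$ for $h\in F$ are pairwise distinct. (We use that $h$ acts on $E^\omega$ compatibly with prefixes: the length-$n$ prefix of $h(w)$ is $h(p)$.)

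\textbf{Non-vanishing.} For any $q\in\source(p)E^*$ we have, by relation (5) extended to paths, $hpq=h(pq)\,h|_{pq}$, which is a non-zero element in normal form $h(pq)\,h|_{pq}\,\source(h(pq))^*$. Since $h(pq)$ has prefix $h(p)$ (length preserved), the first components $h(pq)$ for distinct $h\in F$ are pairwise distinct. By uniqueness of normal forms, the elements $hpq$, $h\in F$, are pairwise distinct non-zero elements of $S_{(G,E)}$, hence linearly independent in the contracted semigroup algebra. Thus $apq=\sum_{h\in F}k_h\,hpq\neq 0$, as some $k_h\neq0$. Hence $a\notin \SingIdeal$, for any field $K$.

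The only delicate step is extracting the finite prefix $p$ separating all images; this uses finiteness of $F$ and that the action on $E^\omega$ is length-preserving and determined on prefixes by the action on $E^*$. Note that $hpq$ need not be checked against sections $h|_{pq}$, since distinctness of the path components already suffices; this is why non-faithfulness of the action causes no difficulty.
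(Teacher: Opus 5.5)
Your proof is correct and follows the paper's argument: pick a prefix $p$ of a path in the free orbit $O$ so that the images $h(p)$, $h\in\supp a$, are pairwise distinct, then use uniqueness of normal forms to show that the terms of $apq$ cannot cancel for any $q\in\source(p)E^*$. The summands are exactly $hpq=h(pq)\,h|_{pq}$ as you write them; the paper writes the same summands in a slightly abbreviated form.
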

\begin{proof}
	Let $a:=\sum_{g\in\Support a}k_gg\in KH$. Because the action of $H$ on $O$ is free, and the support of $a$ is finite, any infinite path in $O$ has some prefix $p$ such that $g(p)\neq h(p)$ for any distinct $g,h\in \Support a$. Suppose that $a$ is singular. Then there exists some $q\in \source(p)E^*$ such that $apq=0$. It follows that
	\[0 = \sum_{g\in \Support a}k_gg(p)g|_p(q),\]
	where all the summands are in normal form and pairwise different. This implies that $\supp a=\emptyset$ and thus $a=0$.
\end{proof}

\subsection{Katsura algebras and Katsura-Exel-Pardo groupoids}

Katsura algebras were first defined in \cite{Katsura} as models of Kirchberg algebras. Exel and Pardo realized these as algebras of groups acting on graphs in a self-similar way \cite{ExParSSGraphs}, which can also be seen as self-similar groupoids. These have been termed as Katsura-Exel-Pardo self-similar groupoids (KEP-groupoids) in the literature, and are defined as follows.

Let $A\in \MatN$ have no zero rows, and let $B \in \MatZ$ such that $A_{u,v} = 0$ implies $B_{u,v}=0$.
We will call such $(A,B)$ a \emph{Katsura pair}.
Consider the groupoid with unit space $E_A^{0}$ consisting of a copy of the infinite cyclic group $\ZZ^{(v)}=\langle g_v \rangle$ for each $v\in E_A^0$ setting $\source(g_v)=\range(g_v)=v$ which is identified with $g^0_v$.
We define a self-similar action of the groupoid $\bigcup_{v \in E_A^{0}} \ZZ^{(v)}$ on $E_A^*$ as follows. For each $u,v\in [1,N]$, and each $k\in \ZZ$ and $r\in [0,A_{u,v}-1]$, there is a unique $\hat{k}\in \ZZ$ and $\hat{r}\in [0,A_{u,v}-1]$ such that
\begin{equation}\label{EQ: action equation}
    kB_{u,v}+r = \hat{k}A_{u,v}+\hat{r}.
\end{equation}
This governs the action of the groupoid on $E_A^*$ as follows:
for $g_u^k\in \ZZ^{(u)}$ and $e_{u,v,r}\in E_A^1$, define the action
\[g_u^k (e_{u,v,r})=e_{u,v,\hat{r}},\]
and the section
\[g_u^k|_{e_{u,v,r}}=g_v^{\hat{k}}.\]
This extends to a self-similar action on $E_A^*$ by letting $g(ep) := g(e)g|_e(p)$ for all $g\in \bigcup_{v \in E_A^{0}} \ZZ^{(v)}$ and  $ep\in \source(g)E_A^*$. We denote the associated self-similar groupoid by $(\hat G_B, E_A)$, and call it the \emph{KEP-groupoid associated to $A$ and $B$}. The associated $C^{\ast}$-algebra $C^{\ast}_r(\G_{(\hat G_B, E_A)})$ coincides with Katsura's algebra $\mathcal{O}_{A, B}$ \cite{ExParSSGraphs}. It is always nuclear, separable, satisfies the UCT \cite[Prop. 2.9]{Katsura}, and its $K$-theory is easily computable from the matrices $A$ and $B$ \cite[Prop. 2.6]{Katsura}. Our interest is in simplicity and vanishing of the singular ideal. Towards that end, sufficient conditions have been obtained in \cite[Prop. 2.10]{Katsura} and \cite[Thm. 18.12]{ExParSSGraphs}.

The action of $(\hat G_B, E_A)$ on $E_A^*$ is not always faithful.
Following \cite{HUME_WHITTAKER_2025}, by quotienting $(\hat G_B, E_A)$ with the kernel of the action, we also consider the associated faithful self-similar groupoid, denoted by $(G_B, E_A)$, and refer to it as the \emph{faithful KEP-groupoid} associated to $A$ and $B$. By slight abuse of notation, we denote the generators of both $(\hat G_B)_v$ and $(G_B)_v$ by $g_v$, their actions are governed by the same rules above, but the order of $(G_B)_v$ may be finite. 

\subsection{A remark on the assumption of no sources}\label{Subsec: addressing sources}

In this paper, we only consider Katsura pairs $(A, B)$ where $E_A$ has no sources to remain consistent with much of the existing literature \cite{Katsura, ExParSSGraphs, HUME_WHITTAKER_2025, LACA2018268, AAKRE2026130446}. The assumption of no sources has been dropped at times, see for example \cite{MundKwaTwisOpAlgSSG, miller2025homologyktheoryselfsimilaractions}. In the setting of Katsura algebras, it turns out that we can extend our classification of vanishing of the singular ideal to the setting with sources by adding loops to the sources.

Let $A^{\circ}\in \MatN$ have zero rows, and let $B^{\circ} \in \MatZ$ such that $A^{\circ}_{u,v} = 0$ implies $B^{\circ}_{u,v}=0$. Let $A$ and $B$ be the matrices obtained by copying $A^{\circ}$ and $B^{\circ}$ respectively, and then setting $A_{v,v}=1=B_{v,v}$ whenever $v$ is a source in $E_{A^{\circ}}$. The result is that each source in $E_{A^{\circ}}$ is given a loop in $E_{A}$ with associated $B$-value defined as 1. 

 Consider the faithful self-similar groupoid $(G_{B^{\circ}}, E_{A^{\circ}})$ defined by applying Equation \ref{EQ: action equation} to $(A^{\circ}, B^{\circ})$, and consider the faithful KEP-groupoid $(G_B, E_A)$. It is easy to show that the gauge invariant subgroupoid (see Section \ref{Sec : algebraic and c* simplicity}) associated to $( G_{B^{\circ}}, E_{A^{\circ}})$ coincides with that of $(G_{B}, E_{A})$. By Lemma \ref{Lem:J0 and J}, the  $C^{\ast}$-singular ideals of $\G_{(G_{B}, E_{A})}$ and $\G_{( G_{B^{\circ}}, E_{A^{\circ}})}$ vanish at the same time. This coincides with the vanishing of the algebraic singular ideals by Theorem \ref{Thm: J and JC}. These same observations apply to $(\hat G_{B^{\circ}}, E_{A^{\circ}})$ and $(\hat G_{B}, E_{A})$
 as well.
 
Note that the construction of adding loops does not always preserve minimality and topological freeness of the associated ample groupoids.

\section{Algebraic simplicity and $C^{\ast}$-simplicity coincide}
\label{Sec : algebraic and c* simplicity}

Before we describe the algorithms to determine simplicity, let us first show that simplicity of the complex Steinberg algebras of a (faithful or non-faithful) KEP-groupoid coincides with simplicity of the reduced $C^{\ast}$-algebra.

Let $(G,E)$ be a countable, self-similar bundle of abelian groups and $\G_{(G,E)}$ the associated ample groupoid.
Its \emph{gauge invariant subgroupoid} $\G_{0(G, E)}\subseteq\G_{(G, E)}$ is an open subgroupoid defined as
$$\G_{0(G, E)}=\{[rgq^*, p] \in \G_{(G, E)} \colon |r|=|q| \}.$$
By definition, $\CC(\G_{0(G,E)})$ is contained in $\CC(\G_{(G,E)})$ (tacitly extending functions on $\G_{0(G,E)}$ to $\G_{(G,E)}$ by $0$), and thus we also have $C_r^*(\G_{0(G,E)}) \subseteq  C_r^*(\G_{(G,E)})$ \cite[Remark 3.3]{Gardella2025}.

Let $J_0$ denote the singular ideal of $C_r^*(\G_{0(G,E)})$ and let $J_{0\mathbb{C}}$ denote the singular ideal of $\mathbb{C}\G_{0(G,E)}$. By definition, $J_{0\CC}=J_\CC \cap \mathbb{C}\G_{0(G,E)}$.

\begin{Lemma}\label{Lem: isotropies in gauge invariant subgroupoid}
The groupoid $\G_{0(G, E)}$ is amenable with abelian isotropies.	
\end{Lemma}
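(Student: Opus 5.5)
The plan is to prove the two claims separately. For the isotropy, I first describe an arrow of $\G_{0(G,E)}$ fixing $w\in E^\omega$. Such an arrow is a germ $[rgq^*,w]$ with $|r|=|q|$, $w=qw'$, and $rg(w')=w$. Write $w=qw'$ with $w'\in\source(q)E^\omega$. Since $|r|=|q|$ and $rg(w')=qw'$, comparing prefixes of length $|q|$ gives $r=q$ and $g(w')=w'$. So every isotropy germ at $w$ has the form $[qgq^*,w]$ with $g\in(G)_{\source(q)}$ fixing $w'$. Any two such germs can be represented with the same $q$: replace the shorter prefix by a common longer prefix $q'=qp$ of $w$ and use $qgq^*\cdot q'q'^* = qg(p)\,g|_p\,p^*q^*$ together with $g(p)=p$, which holds because $g$ fixes $w'$. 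Then multiplication is $[qgq^*,w][qhq^*,w]=[qghq^*,w]$. Commutativity follows because each $(G)_v$ is abelian ($G$ is a bundle of abelian groups). Hence the isotropy group at $w$ is a quotient of a directed union of abelian groups, and so it is abelian.

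For amenability, I would use the standard cocycle argument. The map $c\colon\G_{(G,E)}\to\ZZ$, $[pgq^*,w]\mapsto|p|-|q|$, is a well-defined continuous cocycle. It is well defined because the germ relation $sv=tv$ preserves $|p|-|q|$ for elements in normal form. Its kernel is exactly $\G_{0(G,E)}$. So an alternative route would be to cite amenability of $\G_{(G,E)}$ plus the fact that open subgroupoids of amenable étale groupoids are amenable (Anantharaman-Delaroche--Renault). However, amenability of the full groupoid is not given for an arbitrary countable bundle of abelian groups, so I prefer a direct argument.

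Write $\G_0=\bigcup_n\G_0^{(n)}$, where $\G_0^{(n)}$ consists of germs $[rgq^*,w]$ with $|r|=|q|\le n$. Using the no-sources assumption, each element with $|r|=|q|<n$ can be rewritten with $|r|=|q|=n$ via $q=\sum_{e}qee^*$ locally. So $\G_0^{(n)}$ is an open subgroupoid consisting of germs of $rgq^*$ with $|r|=|q|=n$. It is isomorphic to a disjoint union, over pairs of paths of length $n$, of transformation-type groupoids. Concretely, it is the product of the pair groupoid on $E^n$ (fibered over sources) with the germ groupoid of the action of the countable abelian group bundle $G$ on $E^\omega$. Germ groupoids of actions of amenable (here abelian, hence amenable) discrete groups are amenable, as quotients of transformation groupoids $H\ltimes X$ by open normal subgroupoids, or directly via Følner sets. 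Finite pair groupoids are amenable, so each $\G_0^{(n)}$ is amenable. Since $\G_0$ is the increasing union of the open subgroupoids $\G_0^{(n)}$, and amenability passes to such increasing unions, $\G_0$ is amenable.

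The main obstacle I expect is the germ-groupoid step. Passing from the transformation groupoid $(G)_v\ltimes vE^\omega$ to its germ quotient needs care in the non-Hausdorff ample setting. I would handle it by noting that the germ groupoid is the image of the amenable transformation groupoid under a continuous open surjective homomorphism that is the identity on units. An approximate invariant mean (Følner-type functions coming from the abelian group) then pushes forward to one on the germ groupoid, giving amenability directly.
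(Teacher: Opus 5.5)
Your isotropy argument is the paper's. Comparing prefixes forces $r=q$, you move both germs to a common prefix of $w$ (the paper's rewriting $[qgq^*,p]=[qrg|_rr^*q^*,p]$ is your $q'g|_pq'^*$), and commutativity of $(G)_{\source(q')}$ finishes. For amenability you take a different route, and your reason for doing so is mistaken. Amenability of $\G_{(G,E)}$ for a bundle of abelian groups is in the literature, and the paper uses exactly that. It notes that the discrete groupoid $G$ is amenable and applies \cite[Thm. 2.18]{miller2025homologyktheoryselfsimilaractions} to get amenability of $\G_{(G,E)}$. It then passes to the open subgroupoid $\G_{0(G,E)}$ by \cite[Prop. 2.17 (i)]{miller2025homologyktheoryselfsimilaractions}. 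Your cocycle remark is the standard mechanism behind results of that kind. Your direct route, writing $\G_{0(G,E)}$ as an increasing union of the $\G_0^{(n)}$, each a disjoint union of products of finite pair groupoids with groupoids $\mathcal H_v$ on $vE^\omega$, gives a useful explicit picture. The union, product and pair-groupoid steps are fine.

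The step you flag, however, is a real gap as written. Here $\mathcal H_v$ is the quotient of $(G)_v\ltimes vE^\omega$ by the relation ``$g(p)=h(p)$ and $g|_p=h|_p$ for some prefix $p$''. It is not the germ groupoid of the action by homeomorphisms; for non-faithful actions such as $\hat G_B$ it has strictly more arrows and is typically non-Hausdorff. Pushing the F\o lner means forward gives $m_i^x=\frac{1}{|F_i|}\sum_{h\in F_i}\delta_{[h,x]}$, and this system need not be continuous. For example, take $h=g_v^k\neq v$ in $\hat G_B$ acting trivially. Then $[h,x]=[v,x]$ exactly when $x$ passes through a zero edge, by Lemma \ref{lem: fixed path computation}. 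So $x\mapsto m_i^x(1_{vE^\omega})$ jumps at paths $rc^\omega$ as in (T1)--(T2), even though $1_{vE^\omega}$ is a legitimate compactly supported test function. Hence ``pushes forward, giving amenability directly'' only goes through once you fix a notion of amenability for non-Hausdorff \'etale groupoids under which such Steinberg-type means suffice, or reduce to a Borel criterion. Handling this technicality is exactly what the paper's citations provide, and citing them is the cheaper way to close the gap.
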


\begin{proof}
Observe that the discrete groupoid $G$ is amenable, since it is the union of abelian (in particular amenable) groups. It follows by \cite[Thm. 2.18]{miller2025homologyktheoryselfsimilaractions}	that $\G_{(G,E)}$ is amenable.
The amenability of $\G_{0(G, E)}$ follows from the amenability of $\G_{(G,E)}$ by  \cite[Prop. 2.17 (i)]{miller2025homologyktheoryselfsimilaractions}.

Now fix $p \in E^\omega$ and consider the isotropy group around $p$, denoted by $p\G_{0(G, E)}p$. Note that if $[rgq^*,p] \in p\G_{0(G, E)}p$, i.e. if $\range([rgq^*,p])=rg(q^*p)	=p$, then both $q$ and $r$ are prefixes of $p$ and of the same length. It follows that $r=q$. 
Now let $[qgq^*, p],[qrhr^*q^*, p]$ be two arbitrary elements of $p\G_{0(G, E)}p$. Then
\begin{align*}
	[qgq^*, p][qrhr^*q^*, p] &\overset{(1)}{=}[qrg|_rr^*q^*, p][qrhr^*q^*, p]\\
	&\overset{(2)}{=} [qrg|_rhr^*q^*, p]\\
	&\overset{(3)}{=} [qrhg|_rr^*q^*, p]\\
	&\overset{(4)}{=} 	[qrhr^*q^*, p][qgq^*, p],
\end{align*}
where (1) follows by $qgq^* \cdot qr=qgr=qg(r)g|_r=qrg|_r=qrg|_rr^*q^* \cdot qr$, (2) and (4) follow by definition and (3) from the assumption that $(G)_{\source(r)}$ is commutative. 
\end{proof}


\begin{Lemma}
	\label{Lem:J0 and J}
Let $(G,E)$ be a self-similar bundle of abelian groups. Then $J_0=0$ if and only if $J=0$.
\end{Lemma}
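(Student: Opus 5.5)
One direction is immediate. Since $C_r^*(\G_{0(G,E)})\subseteq C_r^*(\G_{(G,E)})$ and $\G_{0(G,E)}$ is an open subgroupoid, a function in $C_r^*(\G_{0(G,E)})$ with empty-interior support (in $\G_{0(G,E)}$) also has empty-interior support when viewed in $\G_{(G,E)}$. Hence $J_0\subseteq J$, and $J=0$ implies $J_0=0$. I will argue carefully that the interior of a subset of the open set $\G_{0(G,E)}$ is the same whether computed in $\G_{0(G,E)}$ or in $\G_{(G,E)}$, and that Renault's $j$-map is compatible with the inclusion.

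For the converse, assume $J\neq 0$; the plan is to produce a nonzero element of $J_0$ using the gauge action. The circle $\mathbb T$ acts on $C_r^*(\G_{(G,E)})$ via the continuous cocycle $c([rgq^*,p])=|r|-|q|$, namely $\gamma_z(f)(\gamma)=z^{c(\gamma)}f(\gamma)$. Its fixed-point algebra is $C_r^*(\G_{0(G,E)})=C_r^*(c^{-1}(0))$. There is a faithful conditional expectation
\[\Phi(f)=\int_{\mathbb T}\gamma_z(f)\,dz,\]
which restricts functions to $c^{-1}(0)$. Faithfulness uses amenability of $\G_{(G,E)}$ from Lemma \ref{Lem: isotropies in gauge invariant subgroupoid}.

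Take $0\neq f\in J$. Then $f^*f\in J$ is a nonzero positive element, so $\Phi(f^*f)\neq 0$. Since $c^{-1}(0)$ is clopen, the support of $\Phi(f^*f)$ is $\supp(f^*f)\cap \G_{0(G,E)}$, which has empty interior. Hence $0\neq\Phi(f^*f)\in J_0$.

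The main obstacle is justifying the expectation step rigorously in the non-Hausdorff setting. I need that $\gamma_z$ genuinely acts as multiplication by $z^{c}$ on $j$-images, so that $j(\Phi(f))=j(f)\cdot 1_{c^{-1}(0)}$. I would verify this first on the dense subalgebra $\mathbb C\G_{(G,E)}$, where it is clear, and then extend it by continuity of $j$ in the reduced norm (pointwise evaluation is bounded by the norm). It remains to ensure that $J$ is $\gamma$-invariant and that $\Phi$ is faithful on positive elements. Faithfulness follows because $\Phi$ is an average of a point-norm continuous action, so it is faithful for any circle action. Note that the abelian isotropy part of Lemma \ref{Lem: isotropies in gauge invariant subgroupoid} is not needed here. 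If one instead wants the paper's likely route, that part would be used to apply a result such as \cite{gonzales2026densesubalgebrassingularideal} to $\G_{0(G,E)}$, but the expectation argument above suffices for this equivalence.
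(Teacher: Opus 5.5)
Your argument is correct and is essentially the paper's proof. The paper cites \cite[Lemma 3.5]{Gardella2025} for a faithful conditional expectation $C_r^*(\G_{(G,E)})\to C_r^*(\G_{0(G,E)})$ that carries $J$ onto $J_0$; you construct that same expectation yourself by averaging the gauge action of the cocycle $c$ and checking $j(\Phi(f))=j(f)\cdot 1_{c^{-1}(0)}$. The other direction rests on $J_0\subseteq J$ in both proofs.

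One small correction: your earlier remark that faithfulness of $\Phi$ uses amenability is unnecessary. As you note yourself later, averaging any point-norm continuous circle action gives a faithful expectation.
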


\begin{proof}
		By \cite[Lemma 3.5]{Gardella2025}, there is a faithful conditional expectation $E \colon C_r^*(\G_{(G,E)}) \to  C_r^*(\G_{0(G,E)})$ such that $E(J)=J_0$. Faithfulness here means that the kernel of $E$ contains no nontrivial ideal, in particular $J_0 \neq 0$ implies $J \neq 0$. The converse is implied by $J_0 \subseteq J$ which holds by definition.
\end{proof}

\begin{Theorem}
	\label{Thm: J and JC}
	Let $(G,E)$ be a self-similar bundle of abelian groups. Then the singular ideal $J$ vanishes if and only if the algebraic singular ideal $J_{\mathbb{C}}$ vanishes.
\end{Theorem}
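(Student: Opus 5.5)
The plan is to reduce both vanishing statements to the gauge invariant subgroupoid $\G_{0(G,E)}$ and then use the results of \cite{hume2025characterizationszerosingularideal, gonzales2026densesubalgebrassingularideal} on groupoids whose singular ideal is detected by the complex Steinberg algebra. By Lemma \ref{Lem:J0 and J}, $J=0$ if and only if $J_0=0$. So it suffices to show two equivalences: $J_0=0 \iff J_{0\CC}=0$, and $J_{0\CC}=0\iff J_\CC=0$.

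For the first equivalence, we use Lemma \ref{Lem: isotropies in gauge invariant subgroupoid}: $\G_{0(G,E)}$ is amenable, ample, second countable (since $G$ is countable and $E$ finite), and has abelian isotropy groups. This places it in the class treated in \cite{gonzales2026densesubalgebrassingularideal} (see also \cite{hume2025characterizationszerosingularideal}). There, via the Hausdorff cover of \cite{brix2025hausdorffcoversnonhausdorffgroupoids}, the singular ideal of the reduced $C^{\ast}$-algebra vanishes exactly when the singular ideal of the dense complex Steinberg algebra does. The hypotheses are stated in terms of amenable, or abelian, isotropy, and abelian groups satisfy them trivially. Hence $J_0=0\iff J_{0\CC}=0$.

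For the second equivalence, one direction is immediate, since $J_{0\CC}=J_\CC\cap\CC\G_{0(G,E)}\subseteq J_\CC$; thus $J_\CC=0$ forces $J_{0\CC}=0$. For the converse, we show that a nonzero element of $J_\CC$ produces a nonzero element of $J_{0\CC}$. By Proposition \ref{prop:how_we_check_J=0} with $K=\CC$, if $J_\CC\neq 0$ then some $a\in \CC(G)_v$ lies in $I_\CC\setminus T_\CC$. Every element of $G\subseteq S_{(G,E)}$ has normal form $vgv^*$ with $|v|=|v|=0$, so its image under $\varphi$ is a function supported on the compact open bisections $[g,\cdot]$, which lie in $\G_{0(G,E)}$. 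Therefore $\varphi(a)\in \CC\G_{0(G,E)}$. It is nonzero because $a\notin T_\CC$, and singular because $a\in I_\CC$, so $\varphi(a)\in J_\CC\cap\CC\G_{0(G,E)}=J_{0\CC}$ is a nonzero element. Combining the two equivalences with Lemma \ref{Lem:J0 and J} gives $J=0\iff J_0=0\iff J_{0\CC}=0\iff J_\CC=0$.

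The main obstacle is the first equivalence. We must check carefully that the precise hypotheses of the cited theorem hold for $\G_{0(G,E)}$: second countability, amenability of the groupoid (not just of its isotropy), and the relevant isotropy condition. Lemma \ref{Lem: isotropies in gauge invariant subgroupoid} supplies exactly the amenability and abelian isotropy. If the cited result instead requires, for example, that isotropy groups be torsion-free or satisfy a condition on their group algebras, the first thing to try is to verify that abelian isotropy still suffices. For abelian groups the relevant group $C^{\ast}$-algebras are commutative and the supports of elements can be controlled via Fourier transform, which should be enough.
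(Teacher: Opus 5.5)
Your proof is correct and follows the paper's route for the substantive direction. From $J_\CC=0$ you pass to $J_{0\CC}=0$, apply \cite{gonzales2026densesubalgebrassingularideal} to $\G_{0(G,E)}$ via Lemma \ref{Lem: isotropies in gauge invariant subgroupoid} to get $J_0=0$, and conclude $J=0$ by Lemma \ref{Lem:J0 and J}. Your extra step, producing a nonzero element of $J_{0\CC}$ from $J_\CC\neq 0$ via Proposition \ref{prop:how_we_check_J=0}, is valid but unnecessary: the implication $J=0\Rightarrow J_\CC=0$ follows directly from $J_\CC\subseteq J$, which is how the paper handles it.
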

\begin{proof}
	By definition, if $J=0$ then $J_{\CC}=0$. We only need to show the converse. 
	Suppose that $J_{\CC}=0$, thus in particular
	$J_{0\CC}=0$. By Lemma \ref{Lem: isotropies in gauge invariant subgroupoid}, we can apply \cite[Thm. D and Thm. G]{gonzales2026densesubalgebrassingularideal} to deduce that $J_0=0$. By Lemma \ref{Lem:J0 and J}, $J=0$.	
\end{proof}

\begin{Corollary}\label{Cor: cstar simplicity iff steinberg simplicity}
	Let $(A,B)$ be a Katsura pair.
	Then 
	\begin{enumerate}
		\item  $C_r^*(\G_{(\hat G_B, E_A)})$ is simple if and only if $\mathbb{C}\G_{(\hat G_B, E_A)}$ is simple.
		\item  $C_r^*(\G_{(G_B, E_A)})$ is simple if and only if $\mathbb{C}\G_{(G_B, E_A)}$ is simple.
	\end{enumerate}
\end{Corollary}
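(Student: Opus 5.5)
The plan is to apply Theorem \ref{Thm: ample groupoid simplicity characterisations} to both algebras and compare the conditions it produces. Fix a Katsura pair $(A,B)$ and let $(G,E)$ denote either $(\hat G_B, E_A)$ or its faithful quotient $(G_B,E_A)$. In both cases $G$ is a countable bundle of groups. The fibres are copies of $\ZZ$ for $\hat G_B$, and cyclic quotients of $\ZZ$ for $G_B$, so they are abelian, and the hypotheses of Lemma \ref{Lem: isotropies in gauge invariant subgroupoid}, Lemma \ref{Lem:J0 and J} and Theorem \ref{Thm: J and JC} hold. The first step is to check that $\G=\G_{(G,E)}$ is amenable. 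This is the first part of the proof of Lemma \ref{Lem: isotropies in gauge invariant subgroupoid}: $G$ is a union of abelian groups, and \cite[Thm. 2.18]{miller2025homologyktheoryselfsimilaractions} then gives amenability of $\G$. Note that this argument does not use faithfulness. Hence both parts of Theorem \ref{Thm: ample groupoid simplicity characterisations} apply to $\G$, with $K=\CC$ in part (1).

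The second step is to compare the two characterisations. By Theorem \ref{Thm: ample groupoid simplicity characterisations}, $C_r^*(\G)$ is simple if and only if $\G$ is minimal, topologically free, and $J=0$. Likewise, $\CC\G$ is simple if and only if $\G$ is minimal, topologically free, and $J_\CC=0$. The conditions of minimality and topological freeness depend only on the groupoid $\G$, so they are identical in the two statements. Theorem \ref{Thm: J and JC} gives $J=0 \iff J_\CC=0$. Combining these, the two simplicity statements are equivalent. Applying this once with $G=\hat G_B$ gives (1), and once with $G=G_B$ gives (2).

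I expect no real obstacle, since the substantive work is in Theorem \ref{Thm: J and JC}, which is already established. The only point needing care is that the hypotheses of the cited results genuinely hold in the faithful case. Specifically, $G_B$ is still a bundle of abelian groups. This is true because it is a quotient of $\hat G_B$ by the kernel of the action, and that quotient is taken fibrewise on each $\ZZ^{(v)}$. Countability of the quotient is likewise clear.
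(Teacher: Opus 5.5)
Your proposal is correct and matches the argument the paper intends. The paper states this corollary without a separate proof, as an immediate consequence of Theorem \ref{Thm: ample groupoid simplicity characterisations} and Theorem \ref{Thm: J and JC}. The amenability comes from the proof of Lemma \ref{Lem: isotropies in gauge invariant subgroupoid}, and the argument applies to both $\hat G_B$ and $G_B$ because each is a bundle of cyclic groups.
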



\section{Fixed paths}
\label{Sec: fixed paths}

In order to determine when singular ideals vanish in KEP groupoid algebras and their faithful counterparts, the first step is to understand when elements of KEP-groupoids fix paths.
 

Let $(A, B)$ be a Katsura pair and let $E_A$ be the corresponding graph. Following \cite[Section 18]{ExParSSGraphs}, for a non-empty path $p=e_1\ldots e_n\in E_A^*$, define
\[A_{p}=\prod_{1\leq l \leq n} A_{e_l} \text{ and }B_{p}=\prod_{1\leq l \leq n} B_{e_l},\]
where $A_{e_l}=A_{\range(e_l),\source(e_l)}$ and $B_{e_l}=B_{\range(e_l),\source(e_l)}$. For any $v \in E_A^0$ we put $A_v=B_v=1$. Observe that $A_p \neq 0$ for any path $p$ by definition. We will sometimes refer to $p$ as a \emph{zero path} if $B_p=0$ and a \emph{non-zero path} otherwise.

The following is easy to see by induction on the length of paths:
\begin{Lemma}[{\cite[Lemma 18.4]{ExParSSGraphs}}]\label{Lem:fixed path}
Let $(A,B)$ be a Katsura pair.
A path $p \in vE_A^*$ is fixed by $g_v^k$ if and only if $\frac{kB_q}{A_q} \in \mathbb Z$ for every prefix $q$ of $p$.
\end{Lemma}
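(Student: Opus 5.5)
We prove a slightly stronger statement by induction on $n=|p|$. For $p=e_1\ldots e_n\in vE_A^*$, write $p_j=e_1\ldots e_j$ for its prefixes, with $p_0=v$. The claim is: $g_v^k$ fixes $p$ if and only if $kB_{p_j}/A_{p_j}\in\ZZ$ for all $0\le j\le n$. Moreover, in that case the section is
\[
g_v^k|_{p_j}=g_{\source(p_j)}^{kB_{p_j}/A_{p_j}} \quad \text{for every } j\le n.
\]
For the base case $n=0$, the empty path $v$ is fixed, its only prefix is $v$ with $A_v=B_v=1$, and $g_v^k|_v=g_v^k$. So everything holds trivially.

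For the inductive step, consider the one-edge computation from Equation \eqref{EQ: action equation}. Take $e=e_{u,w,r}$ and $m\in\ZZ$. The relation $mB_e+r=\hat m A_e+\hat r$ shows that $g_u^m(e)=e$, i.e.\ $\hat r=r$, holds if and only if $mB_e=\hat mA_e$. Since $0\le r,\hat r<A_e$, the residue $\hat r$ is determined uniquely. Hence $g_u^m$ fixes $e$ exactly when $A_e\mid mB_e$, and in that case the section is $g_w^{mB_e/A_e}$.

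Now let $p=p_{n-1}e_n$ and apply the inductive hypothesis to $p_{n-1}$. Since $g_v^k(p)=g_v^k(p_{n-1})\,g_v^k|_{p_{n-1}}(e_n)$, and path words are unique, $g_v^k$ fixes $p$ exactly when two things hold. First, $g_v^k$ fixes $p_{n-1}$, which by induction means the integrality condition holds for all prefixes up to $j=n-1$. Second, the section $g^{m}$ fixes $e_n$, where $m=kB_{p_{n-1}}/A_{p_{n-1}}\in\ZZ$. By the one-edge computation, the second condition holds if and only if
\[
\frac{mB_{e_n}}{A_{e_n}}=\frac{kB_{p_{n-1}}B_{e_n}}{A_{p_{n-1}}A_{e_n}}=\frac{kB_p}{A_p}\in\ZZ .
\]
Here we used the multiplicativity $A_p=A_{p_{n-1}}A_{e_n}$ and $B_p=B_{p_{n-1}}B_{e_n}$. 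In that case the new section is $g^{kB_p/A_p}_{\source(p)}$, which completes the induction.

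The only subtle point, and the reason the strengthened hypothesis is needed, is that the lemma's condition alone does not determine the section. To apply the one-edge criterion we need to know the exponent of $g_v^k|_{p_{n-1}}$ explicitly. This is why the section formula is carried along in the induction.
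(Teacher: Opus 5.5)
Your proof is correct and takes the same route as the paper: the paper only asserts that the lemma follows by induction on path length (citing Exel--Pardo), and the section formula you carry along is exactly the paper's Lemma~\ref{lem: fixed path computation}(1), proved there by the same one-edge computation with \eqref{EQ: action equation}. (Had you split off the first edge instead of the last, the lemma itself, quantified over all $v$ and $k$, would serve as its own inductive hypothesis, so carrying the section formula is a convenience of your decomposition rather than a necessity.)
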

 
It immediately follows:
\begin{Lemma}[{\cite[Prop. 3.2]{HUME_WHITTAKER_2025}}]\label{Lem: OG kernel description}
    Let $(A,B)$ be a Katsura pair. Then $g_v^k \in (\hat G_B)_v$ acts trivially on $E_A^*$ if and only if $\frac{kB_p}{A_p}\in \ZZ$ for all $p\in vE_A^*$.
\end{Lemma}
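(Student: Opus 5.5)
The statement follows directly from Lemma \ref{Lem:fixed path}. I will unpack what ``acts trivially on $E_A^*$'' means and match it against the prefix condition.

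Since $g_v^k$ lies in the isotropy group $(\hat G_B)_v$, its domain of action is $vE_A^*$. So $g_v^k$ acts trivially on $E_A^*$ precisely when it fixes every path $p\in vE_A^*$.

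For the forward direction, suppose $g_v^k$ fixes every $p\in vE_A^*$. Applying Lemma \ref{Lem:fixed path} to each such $p$, and keeping only the prefix $q=p$, gives $\frac{kB_p}{A_p}\in\ZZ$. The empty path $p=v$ causes no trouble, since $A_v=B_v=1$.

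For the converse, assume $\frac{kB_p}{A_p}\in\ZZ$ for all $p\in vE_A^*$. Take any $p\in vE_A^*$. Every prefix $q$ of $p$ again lies in $vE_A^*$, because it has range $v$. Hence $\frac{kB_q}{A_q}\in\ZZ$ for every prefix $q$ of $p$, and Lemma \ref{Lem:fixed path} shows that $g_v^k$ fixes $p$. As $p$ was arbitrary, $g_v^k$ acts trivially.

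There is no real obstacle here. The only point to check is that the set $vE_A^*$ is closed under taking prefixes, so that the ``for every prefix'' quantifier in Lemma \ref{Lem:fixed path} collapses into the universal quantifier over all of $vE_A^*$.
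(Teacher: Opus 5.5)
Your proposal is correct and follows the paper's route. The paper states this lemma as an immediate consequence of Lemma \ref{Lem:fixed path}, and your observation that $vE_A^*$ is closed under taking prefixes is exactly what makes that consequence immediate.
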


We apply the above proposition to find a \emph{computable} characterization of the sizes of the isotropy groups of faithful KEP-groupoids. We first need the following lemma.
\begin{Lemma}\label{Lem: factoring cycles into simple cycles}
	For any cycle $c\in E_A^*$, there exists simple cycles $c_1,\ldots, c_n\in E_A^*$ such that the concatenation of edge sequences $c_1c_2\ldots c_n$ is a permutation of the edge sequence of $c$. Consequently, $A_c = \prod_{i=1}^{n} A_{c_i}$ and $B_c = \prod_{i=1}^{n} B_{c_i}$.
\end{Lemma}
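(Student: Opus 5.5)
We argue by strong induction on the length $|c|$ of the cycle $c=e_1\ldots e_m$. Write $v_i:=\range(e_i)$ for $1\le i\le m$ and $v_{m+1}:=\source(e_m)$, so that $v_{m+1}=v_1$ because $c$ is a cycle.

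If $c$ is simple, we take $n=1$ and $c_1=c$.

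Otherwise $c$ contains a cycle as a proper subsequence of edges. We interpret ``subsequence'' as a contiguous block, since that is how paths are read. Then there are indices $1\le i\le j\le m$ with $(i,j)\neq(1,m)$ such that $d:=e_i\ldots e_j$ is a cycle, i.e.\ $v_i=v_{j+1}$.

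Removing $d$ leaves the sequence $c':=e_1\ldots e_{i-1}e_{j+1}\ldots e_m$, which we check is again a path. Inside each of the two blocks consecutive edges are still composable. At the junction we have $\source(e_{i-1})=v_i=v_{j+1}=\range(e_{j+1})$. If $i=1$ there is no junction, and then $c'=e_{j+1}\ldots e_m$ runs from $v_{j+1}=v_1$ to $v_{m+1}=v_1$; the case $j=m$ is symmetric. In every case $c'$ has range $v_1$ and source $v_1$. Moreover $c'$ is non-empty because $(i,j)\neq(1,m)$, so $c'$ is a cycle.

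Both $d$ and $c'$ are cycles of length strictly less than $|c|$. By the induction hypothesis each decomposes into simple cycles whose concatenated edge sequences are permutations of the edge sequences of $d$ and $c'$ respectively. Concatenating the list for $c'$ with the list for $d$ gives simple cycles $c_1,\ldots,c_n$ whose concatenated edge sequence is a permutation of the disjoint union of the edges of $c'$ and $d$, that is, a permutation of the edge sequence of $c$.

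If one instead allows non-contiguous subsequences, the argument is the same with a more general excision. Take the first repeated vertex along $c$: the segment between its two occurrences is a simple cycle, and excising it leaves a shorter cycle.

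The product formulas follow at once. $A_p$ and $B_p$ are products over the edges of $p$ of commuting integers, so they are invariant under permuting edges. Hence $A_c=\prod_i A_{c_i}$ and $B_c=\prod_i B_{c_i}$.

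The only delicate step is checking that the excised remainder is still a genuine cycle, namely that composability holds at the junction and that $c'$ is non-empty. This is exactly what the condition $(i,j)\neq(1,m)$ together with $v_i=v_{j+1}$ guarantees.
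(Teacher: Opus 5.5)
Your proof is correct and follows essentially the same route as the paper: strong induction on $|c|$, splitting a non-simple cycle into two strictly shorter cycles and concatenating their simple-cycle decompositions. The paper phrases the split as ``$c$ is a cyclic permutation of a concatenation $d_1d_2$ of two cycles'', which is your excised sub-cycle $d$ together with a rotation of the remainder $c'$, and you additionally verify the composability and non-emptiness of $c'$ that the paper leaves implicit.
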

\begin{proof}
	Let $c=e_1 \ldots e_m\in E_A^*$ and suppose the statement holds for all cycles shorter than $c$. If $c$ is simple, the statement holds trivially. If $c$ is not simple, then $c$ is a cyclic permutation of the concatenation $d_1d_2$ of two cycles $d_1, d_2\in E_A^*$, in particular $A_c=A_{d_1}A_{d_2}$. The statement holds for $d_1$ and $d_2$, so there exists simple cycles $c_1,\ldots, c_i$ satisfying the statement for $d_1$ and simple cycles $c_{i+1},\ldots,c_n$ satisfying the statement for $d_2$. It follows that the simple cycles $c_1,\ldots,c_n$ establish the claim for $c$.
\end{proof}

\begin{Proposition}\label{Prop: Infinite group characterization}
  Let $(A,B)$ be a Katsura pair and $(G_B, E_A)$ the faithful KEP-groupoid. Let $v\in E_A^0$.
  \begin{enumerate}
  \item Then $(G_B)_v$ is infinite if and only if there is a simple path $p\in vE_A^*$ and a simple cycle $c\in \source(p)E_A^*$ such that $B_p\neq 0$ and $\frac{B_c}{A_c}\notin\ZZ$. In this case, the action of $(\hat G_B)_v=(G_B)_v$ has a free orbit.
  
  \item If $(G_B)_v$ is a finite group, the order is \[|(G_B)_v|= \lcm\left\{\frac{A_p}{\gcd(A_p, |B_p|)}\colon p\in vE_A^*\text{ is a simple path}, B_p\neq 0\right\}.\]
\end{enumerate}    
\end{Proposition}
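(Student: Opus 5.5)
The plan is to compute the kernel of the action explicitly using Lemma~\ref{Lem: OG kernel description}. By that lemma, $g_v^k$ acts trivially exactly when $k B_p/A_p \in \ZZ$ for every $p \in vE_A^*$. Writing $d_p := A_p/\gcd(A_p,|B_p|)$ for the reduced denominator of $B_p/A_p$ (so $d_p=1$ when $B_p=0$), this condition says $d_p \mid k$ for all $p$. Hence the kernel is $d\ZZ$ with $d=\lcm\{d_p \colon p\in vE_A^*\}$ when this lcm is finite, and the kernel is trivial when the $d_p$ are unbounded. So $(G_B)_v \cong \ZZ/d\ZZ$ in the first case and $(G_B)_v = (\hat G_B)_v \cong \ZZ$ in the second. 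Both parts of the statement then reduce to showing that the lcm over all paths equals the lcm over simple paths with $B_p\neq 0$, unless the $(p,c)$ condition of (1) holds, in which case the $d_p$ are unbounded.

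The key step, and the main obstacle, is a decomposition of an arbitrary path. Given $p \in vE_A^*$, I would repeatedly excise a cycle: take the first vertex repeated along $p$ and remove the cycle between its two occurrences. Iterating this yields a simple path $p'$ from $v$ to $\source(p)$ and a finite family of cycles, each based at a vertex $w$ of $p'$. The multiset of edges of $p$ is the disjoint union of the edges of $p'$ and of these cycles. Applying Lemma~\ref{Lem: factoring cycles into simple cycles} to each cycle, I get simple cycles $c_i$, each based on a vertex $w_i$ lying on $p'$, with
\[\frac{B_p}{A_p}=\frac{B_{p'}}{A_{p'}}\prod_i \frac{B_{c_i}}{A_{c_i}}.\]
The care needed is in checking that each $c_i$ really passes through a vertex of $p'$. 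Once that is done, the prefix $q$ of $p'$ ending at $w_i$ is a simple path to a vertex of $c_i$, and if $B_p\neq0$ then $B_q\neq 0$ and $B_{c_i}\neq0$. Rotating $c_i$ so that it starts at $w_i$ does not change $A_{c_i}$ or $B_{c_i}$.

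Now suppose the condition of (1) fails. Then, for every $p$ with $B_p\neq 0$, each factor $B_{c_i}/A_{c_i}$ is an integer, so $d_p \mid d_{p'}$ with $p'$ simple and $B_{p'}\neq 0$. Paths with $B_p=0$ contribute $d_p=1$. Hence the full lcm equals the finite lcm over simple nonzero paths, which gives (2) and finiteness. Conversely, suppose a simple path $p$ with $B_p\neq0$ and a simple cycle $c\in\source(p)E_A^*$ with $B_c/A_c=a/b$ in lowest terms, $b>1$, are given. Then $B_{pc^n}/A_{pc^n}=(B_p/A_p)(a/b)^n$, and its reduced denominator is at least $b^n/A_p$, which is unbounded. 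So the kernel is trivial and $(G_B)_v=(\hat G_B)_v$ is infinite.

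For the free orbit, I would take the infinite path $x=pccc\cdots \in vE_A^\omega$. By Lemma~\ref{Lem:fixed path}, $g_v^k$ fixes $x$ only if $d_q\mid k$ for all prefixes $q$ of $x$. These include the paths $pc^n$, whose denominators are unbounded, so this forces $k=0$. Because $(\hat G_B)_v$ is abelian, points in the same orbit have equal stabilisers, so the whole orbit of $x$ is free.
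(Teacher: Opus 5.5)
Your strategy is the paper's. You describe the kernel via Lemma~\ref{Lem: OG kernel description}, use the unbounded denominators along $pc^n$ to get a free orbit of $pc^\omega$, and reduce an arbitrary non-zero path to a simple one by excising cycles and splitting them with Lemma~\ref{Lem: factoring cycles into simple cycles}. However, the step you flag as needing care is false as stated, so the deferred check cannot be carried out.

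The simple cycles $c_i$ need not meet the simple path $p'$ at all. For example, let $v\neq w$ and take edges $e,f,h$ with $\range(e)=v$, $\source(e)=w=\range(f)=\source(f)=\range(h)$ and $\source(h)=v$, all with non-zero $B$-value, and put $p=efh$. Its vertex sequence is $v,w,w,v$, so $p'$ is forced to be the trivial path $v$. The only partition of its edges into simple cycles is $\{f\}$, $\{e,h\}$. The loop $f$ sits at $w$, which is not on $p'$, so there is no prefix of $p'$ ending on it. Your argument then gives no reason why $B_f/A_f\in\ZZ$, and this is exactly what you need to get $d_p\mid d_{p'}=1$.

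The repair is to take the access path from $p$ rather than from $p'$. Since the edges of $c_i$ occur in $p$, some prefix of $p$ ends at a vertex $w_i$ of $c_i$. Excising cycles from this prefix gives a simple path $q_i\in vE_A^*w_i$ whose edges form a sub-multiset of those of $p$, so $B_{q_i}\neq 0$. Rotating $c_i$ to start at $w_i$ does not change $A_{c_i}$ or $B_{c_i}$, and this puts you in the situation excluded by the negation of (1), giving $B_{c_i}/A_{c_i}\in\ZZ$. This is precisely what the paper does when it observes that $v$ is reachable from each simple cycle $d_j$ along some non-zero path. With this change the rest of your argument goes through.

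There is also a smaller slip. The reduced denominator of $B_{pc^n}/A_{pc^n}=B_pa^n/(A_pb^n)$ need not be at least $b^n/A_p$: take $A_p=1$, $B_p=8$, $a/b=1/2$ and $n=1$. Multiplying by $A_p$ can only shrink the denominator, so the correct bound is the denominator of $B_pa^n/b^n$. Because $\gcd(a,b)=1$, that denominator is $b^n/\gcd(b^n,|B_p|)\geq b^n/|B_p|$, which is still unbounded in $n$, so your conclusion stands.
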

\begin{proof}
   Let $p$ and $c$ be as in part (1). We will show that $(\hat G_B)_v$ acts freely on the orbit of $pc^\omega$, in particular implying that the action is faithful and $(\hat G_B)_v=(G_B)_v$. 
   By Lemma \ref{Lem:fixed path}, $g_v^k \in (\hat G_B)_v$ stabilizes $pc^\omega$ if and only if $\frac{kB_q}{A_q}\in \ZZ$ for all prefixes $q$ of $pc^\omega$. Consider the ratio 
   \[\frac{kB_{pc^n}}{A_{pc^n}}=\frac{kB_{p}}{A_{p}}\cdot\frac{B_{c^n}}{A_{c^n}} = \frac{kB_{p}}{A_{p}}\cdot\left(\frac{B_{c}}{A_{c}}\right)^n,\]
   for some natural number $n$. If $\frac{B_{c}}{A_{c}}\notin\ZZ$, there exists some natural number $n_0$ such that $\frac{kB_{p}}{A_{p}}\cdot\left(\frac{B_{c}}{A_{c}}\right)^n\notin\ZZ$ for all $n\geq n_0$. Thus, $g_v^k$ acts nontrivially on $pc^\omega$ for any $k \neq 0$ as needed.

   We prove the other direction by contrapositive. Suppose that for all $pc\in vE_A^*$ such that  $p$ is a simple path, $B_p\neq 0$ and $c$ is a simple cycle, we have $\frac{B_c}{A_c}\in \ZZ$. Notice that for any $qe\in vE_A^*$ with $B_e=0$, $g_v^k$ fixes $qe$ if and only if it fixes $q$, so it suffices to analyze the action on non-zero paths. We can factor any non-zero $q\in vE_A^*$ as $q=p_1c_1p_2\cdots p_nc_np_{n+1}$ where $c_i$ is a cycle and $p_1 \ldots p_np_{n+1}=:p$ is a simple path. By Lemma \ref{Lem: factoring cycles into simple cycles}, for each $l$, we have $\frac{B_{c_l}}{A_{c_l}} = \prod_{j=1}^{m}\frac{B_{d_j}}{A_{d_j}}$ where $d_1,\ldots,d_m$ are simple cycles with edges contained in $c_l$. Observe that $v$ is reachable from each $d_i$ along some non-zero path, and therefore $\frac{B_{d_j}}{A_{d_j}}\in \ZZ$ for all $j$.  We conclude that $\frac{B_{c_i}}{A_{c_i}}\in \ZZ$ for all $1\leq i\leq n$. Then 
    $$\frac{B_{q}}{A_q}=\frac{B_{p}}{A_{p}}\cdot\frac{B_{c_1}}{A_{c_1}} \dots \frac{B_{c_n}}{A_{c_n}}.$$
    In particular, $g_v^k$ fixes $q$ if and only if $k\frac{B_{p}}{A_{p}} \in \mathbb Z$.
    It follows that $g_v^k$ acts trivially if and only if $k\frac{B_{p}}{A_{p}} \in \mathbb Z$ for any simple path $p$ with $B_p \neq 0$, equivalently, if and only if $k$ divides $\lcm\left\{\frac{A_p}{\gcd(A_p, |B_p|)}\colon p\in vE_A^*\text{ is a simple path}, B_p\neq 0\right\}$. Since there are finitely many simple paths in a finite graph, this least common multiple is finite, implying the finiteness of $(G_B)_v$.
    
Moving on to part (2), it follows from the previous paragraph that
$$|(G_B)_v| = \lcm\left\{\frac{A_p}{\gcd(A_p, |B_p|)}\colon p\in vE_A^*\text{ is a simple path}, B_p\neq 0\right\}.$$
\end{proof}

It follows that on input $(A,B)$ and $1 \leq v \leq |E_A^{0}|$, one can effectively compute the order $|(G_B)_v|$ by first testing (1) to find out if $|(G_B)_v|$ is finite, and if so, using the formula in (2) to compute its order (see Algorithm \ref{alg:order of isotropies}).

We close the section with a technical lemma we will often use.
\begin{Lemma}
\label{lem: fixed path computation}
Let $(A,B)$ be a Katsura pair. Let $u,v\in E_A^0$, and suppose $p \in uE_A^*v$ and $g_u^k(p)=p$. Then
\begin{enumerate}
	\item in $(\hat G_B, E_A)$, we have $g_u^k|_p=g_v^l$ iff 
	$l = k \cdot \frac{B_p}{A_p};$
	\item  in $(G_B, E_A)$, we have $g_u^k|_p=g_v^l$ iff 
	$l \equiv k \cdot \frac{B_p}{A_p} \pmod m,$ where $|(G_B)_v|=m$.
\end{enumerate}
\end{Lemma}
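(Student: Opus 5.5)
Proceed by induction on $|p|$, using the defining equation \eqref{EQ: action equation} and Lemma \ref{Lem:fixed path}. The case $|p|=0$ is trivial: then $p=u=v$, $A_p=B_p=1$, and $g_u^k|_u=g_u^k$.

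For a single edge $e=e_{u,v,r}$, the defining equation gives $kB_e+r=\hat kA_e+\hat r$. The action is $g_u^k(e)=e_{u,v,\hat r}$ and the section is $g_u^k|_e=g_v^{\hat k}$. If $g_u^k$ fixes $e$, then $\hat r=r$, so $\hat k=kB_e/A_e$. In particular this quotient is an integer, which agrees with Lemma \ref{Lem:fixed path}.

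For the inductive step, write $p=qe$ and suppose $g_u^k$ fixes $p$. Then $g_u^k(q)=q$, and $h:=g_u^k|_q$ fixes $e$. In $(\hat G_B,E_A)$ the induction hypothesis gives $h=g_{w}^{k'}$ with $w=\source(q)$ and $k'=kB_q/A_q$. Hence
\[g_u^k|_p=h|_e=g_v^{k'B_e/A_e}, \qquad k'\frac{B_e}{A_e}=k\frac{B_p}{A_p},\]
by multiplicativity of $A$ and $B$ along paths. Since $(\hat G_B)_v\cong\ZZ$ is generated freely by $g_v$, the exponent $l$ is unique. This proves the ``iff'' in (1).

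For (2), the faithful groupoid $(G_B,E_A)$ is the quotient of $(\hat G_B,E_A)$ by the kernel of the action. The action and sections in the quotient are induced from those of $\hat G_B$: sections of kernel elements lie in the kernel, so sections are well defined on the quotient. Choose a lift $g_u^k\in(\hat G_B)_u$ of the given element; it fixes $p$ as well. By (1), its section at $p$ is $g_v^{kB_p/A_p}$ in $\hat G_B$, and its image in $(G_B)_v$ is the image of this element. Since $(G_B)_v$ is cyclic of order $m$, generated by the image of $g_v$, we get $g_v^l=g_v^{kB_p/A_p}$ in $(G_B)_v$ if and only if $l\equiv kB_p/A_p \pmod m$. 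When $(G_B)_v$ is infinite, read this congruence with $m=0$, i.e.\ as equality.

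The main subtlety lies in (2). The exponent $k$ is only defined modulo $|(G_B)_u|$, so we must check that the answer does not depend on the choice of $k$. This follows from the lifting argument above: different lifts differ by kernel elements, whose sections at $p$ are again in the kernel.
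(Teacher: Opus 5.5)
Your proof is correct and follows the paper's argument. You induct on $|p|$, use the defining equation to see that each fixed edge contributes the exponent factor $B_e/A_e$, and combine the factors by multiplicativity of $A_p,B_p$; peeling off the last edge rather than the first, as the paper does, is immaterial. For (2), the paper obtains it from (1) ``by definition''; you spell this step out via lifts and the fact that the kernel is closed under sections, which is a welcome clarification.
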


\begin{proof}
Observe that (1) implies (2) by definition, thus we focus on proving (1), by induction on $|p|$. For $|p|=0$ the claim is obvious. Let $|p| \geq 1$, suppose $g_u^k(p)=p$ and let $g_v^l:=g_u^k|_p$.  Put $p=eq$ where $e \in E_A^1, q\in E_A^*$. Since $g_v^k(e)=e$, by (\ref{EQ: action equation}) we must have 
$kB_e=\hat k A_e$ where $g_u^k|_e=g_{s(e)}^{\hat k}$. Hence $\hat k=k\cdot \frac{B_e}{A_e}$. By induction, we also have $l=\hat k\cdot \frac{B_q}{A_q}$ which implies the claim.
\end{proof}

\section{The singular ideal associated to KEP-groupoids}
\label{Sec: nonfaithful}

In this section, we characterize when Katsura algebras have a vanishing singular ideal by using Propositions \ref{prop:tight_characterization} and \ref{prop:singular_characterization} to look for singular elements in the inverse semgiroup algebra which are not tight.

Throughout the section, let $(A,B)$ be a Katsura pair and $(\hat G_B, E_A)$ the associated KEP-groupoid, and let $v \in E_A^{0}$. An immediate consequence of Propositions \ref{Prop: free orbits prevent singular elements} and \ref{Prop: Infinite group characterization} is the following.

\begin{Proposition}
	\label{Prop: faithful groups nonsingular}
	Let $(\hat G_B, E_A)$ be a KEP-groupoid and $v \in E_A^{0}$. Suppose $(\hat G_B)_v$ acts faithfully on $E_A^*$. Then $(\hat G_B)_v$ does not support a non-zero singular element in $KS_{(\hat G_B, E_A)}$.
\end{Proposition}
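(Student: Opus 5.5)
The plan is to reduce the statement to Proposition \ref{Prop: free orbits prevent singular elements}. For that we need a free orbit of $(\hat G_B)_v$ on $vE_A^\omega$, and Proposition \ref{Prop: Infinite group characterization}(1) supplies one as soon as the faithful isotropy group $(G_B)_v$ is infinite. So the argument splits into two steps: first show that faithfulness forces $(G_B)_v$ to be infinite, then read off the free orbit.

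For the first step, suppose $(\hat G_B)_v$ acts faithfully on $E_A^*$. Then the kernel of the action meets $(\hat G_B)_v$ trivially, so the quotient map $(\hat G_B)_v\to (G_B)_v$ is an isomorphism. Since $(\hat G_B)_v=\langle g_v\rangle\cong\ZZ$, the group $(G_B)_v$ is infinite. (Equivalently, if $(G_B)_v$ had finite order $m$ as computed in Proposition \ref{Prop: Infinite group characterization}(2), then $g_v^m$ would be a nontrivial element of $(\hat G_B)_v$ acting trivially, contradicting faithfulness.)

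For the second step, Proposition \ref{Prop: Infinite group characterization}(1) now gives a simple path $p\in vE_A^*$ with $B_p\neq 0$ and a simple cycle $c\in\source(p)E_A^*$ with $B_c/A_c\notin\ZZ$. Its proof shows that the action of $(\hat G_B)_v$ on the orbit of $pc^\omega\in vE_A^\omega$ is free. I will note that freeness there was established for stabilizers of $pc^\omega$; stabilizers of the other points of the orbit are the same because the group is abelian. With $H=(\hat G_B)_v$ and $O$ this orbit, Proposition \ref{Prop: free orbits prevent singular elements} yields that $H$ supports no nonzero singular element of $KS_{(\hat G_B,E_A)}$ for any field $K$.

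I expect no real obstacle here. The only point needing care is the identification of faithfulness of the $\hat G_B$-isotropy with infiniteness of $(G_B)_v$, which is immediate because $(\hat G_B)_v$ is infinite cyclic.
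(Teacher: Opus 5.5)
Your proposal is correct and is essentially the paper's own argument: the paper presents this proposition as an immediate consequence of Proposition \ref{Prop: free orbits prevent singular elements} and Proposition \ref{Prop: Infinite group characterization}, with faithfulness making $(\hat G_B)_v=(G_B)_v\cong\ZZ$ infinite and part (1) of the latter supplying the free orbit. Your extra remark about stabilizers along the orbit is harmless but not needed, since the existence of a free orbit is already part of the statement of Proposition \ref{Prop: Infinite group characterization}(1).
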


We will see another easy-to-obtain (as well as to verify) necessary condition for a non-zero singular element. The key lemma is as follows.

\begin{Lemma}\label{Lem: ap=0 then Bp=0}
	Suppose $a\in K(\hat G_B)_v\setminus\{0\}$ and $p\in vE_A^*$. Then $ap=0$ implies $B_p=0$.
\end{Lemma}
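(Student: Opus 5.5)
We argue by contrapositive: assuming $B_p\neq 0$, we show that $ap\neq 0$ for every non-zero $a\in K(\hat G_B)_v$. Write
\[a=\sum_{k\in F} c_k g_v^k,\]
with $F\subseteq\ZZ$ finite and all $c_k\neq 0$. By relation (5) and its extension to paths,
\[ap=\sum_{k\in F} c_k\, g_v^k(p)\, g_v^k|_p,\]
and each summand is in normal form $g_v^k(p)\,g_v^k|_p\,\source(p)^*$. Since normal forms are unique, the elements $g_v^k(p)g_v^k|_p$ form a linearly independent family in $KS_{(\hat G_B,E_A)}$ as long as they are pairwise distinct. So it suffices to show that the map
\[k\mapsto \bigl(g_v^k(p),\, g_v^k|_p\bigr)\]
is injective on $\ZZ$ when $B_p\neq 0$. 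Then no cancellation can occur, and $ap\neq 0$.

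For injectivity, suppose $g_v^k(p)=g_v^{k'}(p)$ and $g_v^k|_p=g_v^{k'}|_p$. Since we work in the group $(\hat G_B)_v\cong\ZZ$, set $m=k-k'$. Then $g_v^m=g_v^{-k'}g_v^k$, and self-similarity gives $(gh)(p)=g(h(p))$ and $(gh)|_p=g|_{h(p)}h|_p$. From these, $g_v^m$ fixes $p$ and $g_v^m|_p=\source(p)$, the identity. The fixing of $p$ follows directly from $g_v^{k}(p)=g_v^{k'}(p)$. For the section, a short computation with the cocycle identity applied to $g_v^{-k'}$ at the path $g_v^{k}(p)$ gives
\[g_v^m|_p=\bigl(g_v^{k'}|_p\bigr)^{-1}g_v^k|_p=\source(p).\]
This is valid since all isotropy groups are abelian and the sections land in $(\hat G_B)_{\source(p)}$.

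Now apply Lemma \ref{lem: fixed path computation}(1) in the KEP-groupoid $(\hat G_B,E_A)$. Since $g_v^m$ fixes $p$, we get $g_v^m|_p=g_{\source(p)}^{l}$ with $l=m\cdot B_p/A_p$. The section is trivial, so $l=0$; as $B_p\neq0$ and $A_p\neq 0$, this forces $m=0$, i.e. $k=k'$. This gives the required injectivity.

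The main point needing care is the section computation $g_v^m|_p=(g_v^{k'}|_p)^{-1}g_v^k|_p$. One must check the cocycle identity for inverses, $(g^{-1})|_{g(p)}=(g|_p)^{-1}$, which follows from $\source(p)=(g^{-1}g)|_p=g^{-1}|_{g(p)}g|_p$. A less careful argument could also compare $g_v^k|_p$ and $g_v^{k'}|_p$ directly through the formula of Lemma \ref{lem: fixed path computation}, but that lemma only applies to elements fixing $p$, which is why we pass to $g_v^m$. It is also essential that we work in $\hat G_B$ and not its faithful quotient, where the section is only determined modulo the group order.
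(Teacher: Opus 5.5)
Your proof is correct and follows the paper's argument. Both show that cancellation among the normal forms $g_v^k(p)\,g_v^k|_p$ in $ap$ forces some $g_v^{k-l}$ with $k\neq l$ to fix $p$ with trivial section, and then apply Lemma \ref{lem: fixed path computation}(1) to get $B_p=0$. You write it as a contrapositive and spell out the cocycle computation for $g_v^{k-l}|_p$, correctly identifying the trivial section as $\source(p)$; the paper leaves that computation implicit.
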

\begin{proof}
	If $ap=0$ then we can find distinct $g_v^k, g_v^l\in \supp a$ such that $g_v^k(p)=g_v^l(p)$ and $g_v^k|_p=g_v^l|_p$, or equivalently, $g_v^{k-l}(p)=p$ and $g_v^{k-l}|_p=v=g_v^0$. By Lemma \ref{lem: fixed path computation}, since $k \neq l$, we must have $B_p=0$.
\end{proof}

In combination with Proposition \ref{prop:singular_characterization}, we obtain the following.

\begin{Proposition}
	\label{Prop: singular necessary unfaithful}
If $(\hat G_B)_v$ supports a non-zero singular element, then for all $p\in vE^*$ there exists some $q\in \source(p)E_A^*$ such that $B_{pq}=0$.
\end{Proposition}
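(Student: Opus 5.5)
This follows directly by combining Proposition \ref{prop:singular_characterization} with Lemma \ref{Lem: ap=0 then Bp=0}. Let $a \in K(\hat G_B)_v$ be a non-zero singular element, and fix an arbitrary $p \in vE_A^*$. By Proposition \ref{prop:singular_characterization}, since $a\in \SingIdeal$, there exists some $q \in \source(p)E_A^*$ such that $apq = 0$ in $KS_{(\hat G_B, E_A)}$. Here $pq \in vE_A^*$ is itself a finite path starting at $v$, because $\source(p)=\range(q)$ and $\range(p)=v$.

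We then apply Lemma \ref{Lem: ap=0 then Bp=0} to the non-zero element $a \in K(\hat G_B)_v$ and the path $pq \in vE_A^*$. It gives $B_{pq}=0$, which is the required conclusion for this $p$. Since $p$ was arbitrary, the statement follows.

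There is no real obstacle. The only points to check are that Lemma \ref{Lem: ap=0 then Bp=0} needs $a\neq 0$ and $a$ supported on $(\hat G_B)_v$, both of which hold by hypothesis, and that the concatenation $pq$ is a legitimate path with range $v$, which is immediate from $q\in\source(p)E_A^*$. The substantive work, namely that $ap=0$ forces two distinct group elements to agree on both the image of $p$ and the section at $p$, and hence that $B_p=0$ via Lemma \ref{lem: fixed path computation}, has already been carried out in the lemma.
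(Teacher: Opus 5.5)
Your proposal is correct and is exactly the paper's argument: the paper obtains this proposition by combining Proposition~\ref{prop:singular_characterization} with Lemma~\ref{Lem: ap=0 then Bp=0}, applying the lemma to the path $pq$, just as you do. Your check that $pq\in vE_A^*$ and that $a$ is non-zero and supported on $(\hat G_B)_v$ covers everything the lemma requires.
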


Now let $\pi \colon (\hat G_B, E_A) \to (G_B, E_A)$ be the natural quotient map, and let $\hat T_K$ denote the tight ideal of $KS_{(\hat G_B, E_A)}$.
Let $\kappa$ be the kernel of $\pi$, i.e. the set $\{g \in \hat G_B \colon \pi(g)=\source(g)\}$, and define
$$\kappa_T:= \{g \in \hat G_B \colon g-\source(g) \in \hat T_K\}.$$
Following \cite{miller2025homologyktheoryselfsimilaractions} we call $\kappa_T$ the \emph{tight kernel} of the self-similar action. Of course, both $\kappa$ and $\kappa_T$ contain the units $E_A^0$ by definition. 

\begin{Lemma}
\label{Lem: tight kernel characterization}	
For $g\in \hat G_B \setminus \hat G_B^0$, we have $g \in \kappa_T$ if and only if $g \in \kappa$ and 
every infinite path $p \in \source(g)E_A^\omega$ has a prefix $q$ with $B_q=0$.
\end{Lemma}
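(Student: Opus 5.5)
We apply Proposition \ref{prop:tight_characterization} to the element $a:=g-\source(g)$, where $g=g_v^k$ with $k\neq 0$ and $v=\source(g)$. Thus $g\in\kappa_T$ if and only if every $x\in vE_A^\omega$ has a prefix $q$ with $(g-v)q=0$ in $KS_{(\hat G_B,E_A)}$. Since $(g-v)q=g(q)g|_q-q$ is a difference of two normal forms, it vanishes if and only if $g(q)=q$ and $g|_q=\source(q)$. So the whole proof reduces to understanding, for a single path $q$, when $g_v^k$ fixes $q$ with trivial section.

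By Lemma \ref{lem: fixed path computation}(1), if $g_v^k(q)=q$ then $g_v^k|_q=g_{\source(q)}^{kB_q/A_q}$. Since $k\neq 0$, this section is trivial exactly when $B_q=0$. Hence $g\in\kappa_T$ if and only if every $x\in vE_A^\omega$ has a prefix $q$ with $B_q=0$ and $g(q)=q$.

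For the forward direction, the condition on prefixes is then immediate. To see that $g\in\kappa$, take any finite path $p\in vE_A^*$ and extend it to an infinite path $x=py$, which is possible since $E_A$ has no sources. Some prefix $q$ of $x$ satisfies $(g-v)q=0$, and hence $(g-v)qr=0$ for every extension $qr$. Choosing $qr$ to extend $p$, we get $g(qr)=qr$, and so $g(p)=p$ because the action preserves prefixes. Therefore $g$ acts trivially on $vE_A^*$, i.e.\ $g\in\kappa$.

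For the converse, suppose $g\in\kappa$ and every $x\in vE_A^\omega$ has a prefix $q$ with $B_q=0$. Since $g$ acts trivially, it fixes $q$, and by the computation above $g|_q=\source(q)$. Hence $(g-v)q=0$, and Proposition \ref{prop:tight_characterization} gives $g-v\in\hat T_K$.

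The only delicate point is the normal-form argument in the first step: the claim that $g(q)g|_q-q=0$ forces $g(q)=q$ and $g|_q=\source(q)$. This follows from the uniqueness of normal forms $pgq^*$ in $S_{(\hat G_B,E_A)}$, since both terms are non-zero and a difference of two basis elements of the contracted semigroup algebra vanishes only when they coincide. Everything else is a direct application of the cited results.
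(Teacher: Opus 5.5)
Your proposal is correct and follows essentially the same route as the paper. Both apply Proposition \ref{prop:tight_characterization} to $g-\source(g)$, use uniqueness of normal forms to reduce $(g-\source(g))q=0$ to $g(q)=q$ and $g|_q=\source(q)$, and then use Lemma \ref{lem: fixed path computation} to turn the trivial section into $B_q=0$. You also spell out why $g\in\kappa$ in the forward direction, by extending finite paths using that $E_A$ has no sources; the paper leaves this inside ``the claim follows.'' Conversely, the paper explicitly disposes of the trivial case of infinite paths outside $\source(g)E_A^\omega$, which you pass over silently.
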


\begin{proof}
By Proposition	\ref{prop:tight_characterization}, we have $g-\source(g) \in \hat T_K$ if and only if 
every infinite path $p \in E_A^\omega$ has a prefix $q$ with $gq=\source(g)q$.
If $p \notin \source(g)E_A^\omega$, then $gq=\source(g)q=0$ for any prefix of $p$, and if  $p \in \source(g)E_A^\omega$, then for any prefix $q$ of $p$ we have $gq=g(q)g|_q$ and $\source(g)q=q$. As both $g(q)g|_q$ and $q$ are in normal form, they are equal exactly if $g(q)=q$ and $g|_q=\source(q)$. By Lemma \ref{lem: fixed path computation}, if $g \neq \source(g)$ this is further equivalent to $g(q)=q$ and $B_q=0$. The claim follows.
\end{proof}

By the above, $\kappa_T \subseteq \kappa$, and so for each $v\in E_A^0$, exactly one of the following hold:
\begin{enumerate}
	\item $(\hat G_B)_v$ acts faithfully,
	\item $(\hat G_B)_v$ intersects $\kappa\setminus \kappa_T$,
	\item $(\hat G_B)_v$ intersects $\kappa_T$ nontrivially.
\end{enumerate}

We have seen that in case (1), $(\hat G_B)_v$ supports no nontrivial singular element. We shall now see what happens in the remaining cases.

\begin{Proposition}
	\label{Prop: nonfaithful2}
	Suppose $(\hat G_B)_v$ intersects $\kappa \setminus \kappa_T$. Then $K(\hat G_B)_v$ contains singular elements which are not tight if and only if for all $p\in vE_A^*$ there exists some $q\in \source(p)E_A^*$ such that $B_{pq}=0$.
\end{Proposition}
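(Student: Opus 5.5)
The forward direction is Proposition \ref{Prop: singular necessary unfaithful}. If $K(\hat G_B)_v$ contains a singular element that is not tight, that element is non-zero, so the conclusion follows immediately. The work is in the converse. Assume that every $p\in vE_A^*$ extends to some $pq$ with $B_{pq}=0$. Pick $g\in(\hat G_B)_v\cap(\kappa\setminus\kappa_T)$, so $g\neq v$, and set
\[a:=g-v\in K(\hat G_B)_v.\]
I claim $a$ is singular but not tight.

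\emph{Not tight.} Since $g\notin\kappa_T$, by definition $g-\source(g)=a\notin\hat T_K$. Alternatively, Lemma \ref{Lem: tight kernel characterization} gives this directly: since $g\in\kappa$, there is an infinite path $x\in vE_A^\omega$ none of whose prefixes $q$ has $B_q=0$. For every such prefix we then have $aq=g(q)g|_q-q\neq 0$, because $g(q)=q$ but $g|_q\neq\source(q)$ by Lemma \ref{lem: fixed path computation}. Proposition \ref{prop:tight_characterization} then shows $a\notin\hat T_K$.

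\emph{Singular.} Let $p\in vE_A^*$. By hypothesis there is $q\in\source(p)E_A^*$ with $B_{pq}=0$. Since $g\in\kappa$, $g$ fixes $pq$. Write $g=g_v^k$. Lemma \ref{lem: fixed path computation}(1) gives
\[g|_{pq}=g_{\source(q)}^{\,kB_{pq}/A_{pq}}=\source(q).\]
Hence $g\,pq=g(pq)\,g|_{pq}=pq=v\,pq$, so $apq=0$. For $p\notin vE_A^*$ we have $ap=0$ trivially (take $q=\source(p)$). Proposition \ref{prop:singular_characterization} then shows $a\in I_K$.

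I expect no real obstacle here. The only point needing care is the use of faithfulness-type information: membership in $\kappa$ guarantees that $g$ fixes every path, which is what allows Lemma \ref{lem: fixed path computation} to be applied along arbitrary extensions $pq$.
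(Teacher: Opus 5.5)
Your proof is correct and follows the paper's own argument essentially verbatim. The forward direction comes from Proposition \ref{Prop: singular necessary unfaithful}, and the converse uses $a=g-v$ for $g\in(\hat G_B)_v\cap\kappa\setminus\kappa_T$: this element is non-tight by the definition of $\kappa_T$, and it is singular because $B_{pq}=0$ forces $g|_{pq}=\source(pq)$ by Lemma \ref{lem: fixed path computation}. Your extra non-tightness check via Lemma \ref{Lem: tight kernel characterization} is also valid, though redundant.
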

\begin{proof}
	The `left-to-right' direction follows immediately by Proposition \ref{Prop: singular necessary unfaithful}. For the converse, let $g \in (\hat G_B)_v \cap \kappa \setminus \kappa_T$, and consider $a=g-\source(g)$. By the definition of $\kappa_T$, we have $a \notin \hat T_K$. We will show that $a$ is singular using Proposition \ref{prop:singular_characterization}. Indeed, let $p \in E_A^*$ -- if $\range(p) \neq v$, then $ap=0$. Otherwise by assumption there exists some $q\in \source(p)E_A^*$ such that $B_{pq}=0$, in particular $g(pq)=pq$ and by Lemma \ref{lem: fixed path computation}, $g|_{pq}=\range(pq)$, so $apq=pq-pq=0$. It follows that $a$ is singular.	
\end{proof}

\begin{Proposition}
	\label{Prop: nonfaithful_3}
	Suppose $(\hat G_B)_v$ intersects $\kappa_T$ nontrivially. Then every singular element of $K(\hat G_B)_v$ is tight.
\end{Proposition}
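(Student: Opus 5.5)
The plan is to verify tightness of an arbitrary singular $a\in K(\hat G_B)_v$ directly with Proposition \ref{prop:tight_characterization}. The hypothesis enters only through Lemma \ref{Lem: tight kernel characterization}. Pick $g=g_v^k\in(\hat G_B)_v\cap\kappa_T$ with $k\neq 0$. By that lemma, every infinite path $x\in vE_A^\omega$ has a prefix $q$ with $B_q=0$. Infinite paths not starting at $v$ are harmless, since $aq=0$ for any prefix $q$ of such a path. So it suffices to show the following: if $q\in vE_A^*$ satisfies $B_q=0$, then $aq=0$ in $KS_{(\hat G_B,E_A)}$.

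\textbf{Step 1: sections along zero paths are trivial.} Write $q=q_1eq_2$, where $e$ is the first edge of $q$ with $B_e=0$. For any $h\in(\hat G_B)_v$, equation (\ref{EQ: action equation}) at the edge $h|_{q_1}(e)$ reads $kB_e+r=\hat kA_e+\hat r$ with $B_e=0$ and $0\le r<A_e$. This forces $\hat k=0$, so $h|_{q_1e}$ is a unit. Consequently $h|_q$ is the unit $\source(q)$, and in normal form
\[ hq=h(q)\,\source(q). \]
Hence $aq=\sum_{m}k_m\,g_v^m(q)$, where $a=\sum_m k_m g_v^m$.

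\textbf{Step 2: singularity transfers.} Since $a$ is singular, Proposition \ref{prop:singular_characterization} gives some $r\in\source(q)E_A^*$ with $aqr=0$. By Step 1,
\[ aqr=\sum_m k_m\,g_v^m(q)\,r. \]
The normal-form elements $g_v^m(q)r$ and $g_v^{m'}(q)r$ coincide exactly when $g_v^m(q)=g_v^{m'}(q)$. Grouping the terms of $aqr$ by the path $q'=g_v^m(q)$, the vanishing of $aqr$ means that $\sum_{m\,:\,g_v^m(q)=q'}k_m=0$ for every $q'$. The terms of $aq$ group in exactly the same way, so these same coefficient sums show $aq=0$. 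Therefore every infinite path has a prefix killing $a$, and $a\in\hat T_K$ by Proposition \ref{prop:tight_characterization}.

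\textbf{Main obstacle.} The only delicate point is Step 1. One must confirm that after the first zero edge the section really is the identity regardless of the current exponent and of $r$, i.e.\ that $\hat k=0$ is forced by $0\le r<A_e$. One must also confirm that it stays trivial along the rest of $q$, which holds because units act trivially with trivial sections. Once this is checked, the rest is the same normal-form bookkeeping as in Proposition \ref{Prop: free orbits prevent singular elements}.
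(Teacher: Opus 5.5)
Your proof is correct and follows the paper's argument essentially verbatim. You use Lemma \ref{Lem: tight kernel characterization} to find a prefix $q$ with $B_q=0$ on every infinite path from $v$, invoke singularity to get $aqr=0$, and strip off $r$ by normal-form bookkeeping to conclude $aq=0$. Your Step 1 also makes explicit a fact the paper uses only tacitly in the identity $gqr=g(q)r$: every element of $(\hat G_B)_v$ has trivial section along such a $q$, even elements that do not fix $q$, whereas Lemma \ref{lem: fixed path computation} covers only those that do.
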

\begin{proof}
	Suppose that $a\in K(\hat G_B)_v$ is singular, and let $p \in vE_A^\omega$ be an infinite path. By 
	Lemma \ref{Lem: tight kernel characterization}, there exists a finite prefix $q$ of $p$ with $B_q=0$.
	By the singularity of $a$, there is also some $r \in \source(q)E_A^*$ with $aqr=0$. Let $a=\sum_{g\in \supp a}k_g g$. Then
	\[0=aqr = \sum_{g\in \supp a}k_ggqr = \sum_{g\in \supp a}k_g g(q)r,\]
	which implies that $aq=\sum_{g \in \supp a}k_gg(q)=0$. Thus, $a$ is tight by Proposition \ref{prop:tight_characterization}.
\end{proof}

We obtain the following characterization of vanishing singular ideal.

\begin{Theorem}
	\label{Thm:unfaithful J=0}
Let $(A,B)$ be a Katsura pair and $(\hat G_B, E_A)$ the associated KEP-groupoid. Let $K$ be any field.
The singular ideal $\hat J_K$ of $K\G_{(\hat G_B, E_A)}$ is non-zero if and only if there exists some vertex $v \in E_A^{0}$ with $(\hat G_B)_v$ acting non-faithfully and such that
\begin{enumerate}
	\item[(T1)]  there exist a simple path $r \in vE_A^*$ and a simple cycle $c \in \source(r) E_A^* \source(r)$ with $B_{rc} \neq 0$; and
	\item[(T2)]  for all $p\in vE_A^*$ there exists some $q\in \source(p)E_A^*$ such that $B_{pq}=0$.
\end{enumerate}
\end{Theorem}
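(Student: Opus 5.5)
By Proposition \ref{prop:how_we_check_J=0}, $\hat J_K\neq 0$ if and only if some vertex $v$ has $K(\hat G_B)_v$ containing a singular element that is not tight. So the theorem reduces to showing that, for a fixed vertex $v$, this happens exactly when $(\hat G_B)_v$ acts non-faithfully and (T1), (T2) hold. The trichotomy following Lemma \ref{Lem: tight kernel characterization} organises the argument. In case (1) (faithful action) there is nothing singular by Proposition \ref{Prop: faithful groups nonsingular}. In case (3) every singular element is tight by Proposition \ref{Prop: nonfaithful_3}. In case (2) singular non-tight elements exist iff (T2) holds, by Proposition \ref{Prop: nonfaithful2}. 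It therefore remains to show that, when $(\hat G_B)_v$ acts non-faithfully, case (2) rather than case (3) occurs exactly when (T1) holds.

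Because $(\hat G_B)_v\cong\ZZ$ and $\kappa_T\subseteq\kappa$, non-faithfulness means $\kappa\cap(\hat G_B)_v=\langle g_v^m\rangle$ for some $m\ge1$. By Lemma \ref{Lem: tight kernel characterization}, a nontrivial element of $\kappa\cap(\hat G_B)_v$ lies in $\kappa_T$ iff every $x\in vE_A^\omega$ has a prefix $q$ with $B_q=0$. This condition does not depend on the element, so case (3) holds iff every infinite path from $v$ has a zero prefix, and case (2) holds iff some infinite path $x\in vE_A^\omega$ has $B_q\neq0$ for all of its prefixes $q$.

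The core step is to prove that such a path $x$ exists iff (T1) holds. Suppose first that $r$ and $c$ are as in (T1). Then $x=rc^\omega$ works, since $B_{rc^n}=B_rB_c^n\neq0$ and every prefix of $x$ is a prefix of some $rc^n$. Note that $B_{rc}\neq0$ gives $B_r\neq0$ and $B_c\neq0$, and that products of nonzero integers are nonzero. Conversely, suppose $x$ has all prefixes non-zero. Since $E_A$ is finite, some vertex repeats along $x$. Take the first repetition: $x=r\,c\,y$ where $r$ is simple, $c$ is a cycle from $\source(r)$ to itself, and the first return to $\source(r)$ happens at the end of $c$.

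The main obstacle is making $c$ simple, since a cycle can revisit intermediate vertices. I would handle this by choosing indices $i<j$ minimal with $j$, so that the vertex at position $j$ equals that at position $i$. Then the segment between positions $i$ and $j$ visits distinct vertices apart from its endpoints, so it is a simple cycle $c$, and the prefix $r$ up to position $i$ is a simple path. Since $rc$ is a prefix of $x$, we get $B_{rc}\neq0$, which is (T1).

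Finally, combining the steps: $\hat J_K\neq0$ iff some $v$ acts non-faithfully and falls into case (2) with (T2) holding, that is, iff some non-faithful $v$ satisfies (T1) and (T2).
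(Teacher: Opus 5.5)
Your proposal is correct and follows the paper's proof essentially step for step. You reduce to a single vertex via Proposition \ref{prop:how_we_check_J=0}, sort the cases with Propositions \ref{Prop: faithful groups nonsingular}, \ref{Prop: nonfaithful2}, \ref{Prop: nonfaithful_3} and Lemma \ref{Lem: tight kernel characterization}, and show that an infinite path from $v$ with no zero prefix exists iff (T1) holds, using $rc^\omega$ in one direction and a factorization at the first repeated vertex in the other. Your remark that membership of a nontrivial kernel element in $\kappa_T$ does not depend on the element only makes explicit what the paper uses implicitly. The ``obstacle'' of making $c$ simple is already resolved by taking the first repeated vertex, and your minimal-$j$ choice is exactly that.
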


\begin{proof}
By Proposition \ref{prop:how_we_check_J=0}, $\hat J_K$ is non-zero exactly if there exists $v \in E_A^{0}$ such that $(\hat G_B)_v$ supports a singular element that is not tight. By Propositions \ref{Prop: faithful groups nonsingular}, \ref{Prop: nonfaithful2} and \ref{Prop: nonfaithful_3}, this happens exactly if  $(\hat G_B)_v$ intersects $\kappa \setminus \kappa_T$ and (T2) holds. By Lemma \ref{Lem: tight kernel characterization}, $(\hat G_B)_v$ intersects $\kappa \setminus \kappa_T$ exactly if $(\hat G_B)_v$ acts non-faithfully and there exists an infinite path $p \in vE_A^\omega$ such that no finite prefix $q$ satisfies $B_q=0$. We claim that this latter condition is equivalent to (T1). Indeed if (T1) holds, then $p=rc^\omega$ is clearly such an infinite path. Conversely, given any infinite path $p$, we can factorize it along its first repeated vertex as $p=rcw$ where $r$ is a simple path and $c$ is a simple cycle. If $B_q \neq 0$ for any finite prefix $q$ of $p$, then in particular $B_{rc} \neq 0$ as well. This completes the proof.
\end{proof}

We apply Theorem \ref{Thm:unfaithful J=0} to obtain our simplicity characterization for Katsura algebras.

\begin{Corollary}\label{Cor: nonfaithful simplicity main result}
Let $(A,B)$ be a Katsura pair and $(\hat G_B, E_A)$ the associated KEP-groupoid and let $K$ be any field. The Steinberg algebra $K\G_{(\hat G_B, E_A)}$ and the Katsura algebra $C^{\ast}_r(\G_{(\hat G_B, E_A)})$ are simple if and only if both of the following hold:

\begin{enumerate}
	\item[(C1)] $E_A$ has a unique nontrivial strongly connected component which contains at least two cycles;
	
	
	\item[(C2)] For any $v\in E_A^0$ with $(G_B)_v$ acting non-faithfully, $B_{pc}=0$ for any $pc\in vE_A^*$ where $p$ is a simple path and $c$ is a simple cycle.
	
\end{enumerate}
When these conditions hold, $C^{\ast}_r(\G_{(\hat G_B, E_A)})$ is purely infinite simple. Moreover, the simplicity of $K\G_{(\hat G_B, E_A)}$ does not depend on $K$.
\end{Corollary}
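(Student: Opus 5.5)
Taking $K\G:=K\G_{(\hat G_B, E_A)}$, the plan is to combine Theorem \ref{Thm: ample groupoid simplicity characterisations} with Theorem \ref{Thm:unfaithful J=0}. By part (1) of the former, $K\G$ is simple iff $\G_{(\hat G_B,E_A)}$ is minimal, topologically free and $\hat J_K=0$. By part (2), $C^*_r(\G_{(\hat G_B,E_A)})$ is simple iff the same holds with $J$ in place of $\hat J_K$; this uses amenability, which Lemma \ref{Lem: isotropies in gauge invariant subgroupoid} provides. Theorem \ref{Thm: J and JC} gives $J=0\iff J_{\CC}=0$. Theorem \ref{Thm:unfaithful J=0} is field-independent, so $J_{\CC}=0\iff \hat J_K=0$ for every $K$, and the same conditions govern all cases. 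Pure infiniteness then follows from Proposition \ref{Prop: simple implies purely infinite}. The work is therefore to show that (C1) is equivalent to minimality plus topological freeness, and that (C2) is equivalent to $\hat J_K=0$ given (C1).

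\textbf{Step 1: (C1).} Since $E_A$ is finite with no sources, every vertex reaches a cycle, so there is at least one nontrivial strongly connected component. (Cof) says all cycles lie in a single component, so that component is the unique nontrivial one. Under (Cof), the condition (Cyc) holds iff this component is not a single cycle, i.e. contains at least two distinct simple cycles. Indeed, a component consisting of one cycle gives that cycle no entrance, since any entrance would come from a vertex reaching it, forcing another cycle in the same component. Conversely, two cycles in one component force an entrance.

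\textbf{Step 2: (Evr).} I need that (Evr) holds automatically for KEP-groupoids, so that (C1) is exactly minimality plus topological freeness. Suppose $g_v^k$ fixes every path. If $k=0$, take $p=v$. Otherwise, since $E_A$ has no sources, the finite graph contains a cycle reachable from $v$ in the forward path direction. I would pick a path and try to reach an edge with $B_e=0$; if one is reached, Lemma \ref{lem: fixed path computation} gives section $g^0$. The difficulty is when no zero path exists. Then the relevant simple cycles $c$ must satisfy $B_c/A_c\in\ZZ$, and the section along $pc^n$ is $k B_p B_c^n/(A_pA_c^n)$. Trivial section would need $B=0$ somewhere, so this route can fail. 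I therefore expect (Evr) to be the main obstacle. If it is not automatic, the fallback is to show it is implied by (C2) plus (C1), or that its failure produces elements of $\kappa\setminus\kappa_T$. I would look for this in \cite[Section 18]{ExParSSGraphs}, which treats topological freeness of these groupoids and may already absorb (Evr) into the stated conditions.

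\textbf{Step 3: (C2) versus $\hat J_K=0$.} By Theorem \ref{Thm:unfaithful J=0}, $\hat J_K=0$ iff every non-faithful vertex $v$ fails (T1) or fails (T2). Condition (C2) is exactly the negation of (T1) at every non-faithful $v$. Hence (C2) implies $\hat J_K=0$. For the converse, take a non-faithful $v$ satisfying (T1); I must show (T2) also holds there. If (T2) failed, some $p\in vE_A^*$ would have $B_{pq}\neq 0$ for all continuations $q$. Using (C1), every path from $v$ can be extended into the unique nontrivial component. If no zero edges were reachable, the faithfulness analysis of Proposition \ref{Prop: Infinite group characterization} combined with non-faithfulness should give a contradiction, or an element of $\kappa_T$. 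I expect to handle this by splitting into the cases of Lemma \ref{Lem: tight kernel characterization}.

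The read-off of the groups $(G_B)_v$ versus $(\hat G_B)_v$ in (C2) is the same non-faithfulness condition, since $\kappa$ is the kernel of $\pi$.
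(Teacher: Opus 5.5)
There is a genuine gap, and it sits where you suspected in Step 2. Condition (Evr) is \emph{not} automatic for KEP-groupoids. As a result, both halves of your reduction are false: (C1) is not equivalent to minimality plus topological freeness, and (C2) is not equivalent to $\hat J_K=0$ under (C1).

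Take $A=(2)$, $B=(2)$. Then $E_A$ is one vertex $v$ with two loops, so (C1) holds. By Lemma~\ref{lem: fixed path computation}, $g_v^k$ fixes every path and $g_v^k|_p=g_v^k$ for all $p$. So $(\hat G_B)_v$ is non-faithful and (Evr) fails, and $\G_{(\hat G_B,E_A)}$ is not topologically free. There are no zero edges, so (T2) fails and $\hat J_K=0$ by Theorem~\ref{Thm:unfaithful J=0}. Yet (C2) fails, since $B_c=2\neq 0$ for a loop $c$. (The three-vertex pair of Remark~\ref{Rem: example 2 remark} behaves the same way.)

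So the converse you attempt in Step 3 cannot be proved: the claim that a non-faithful $v$ satisfying (T1) must satisfy (T2), using only (C1) and $\hat J_K=0$. Here $v$ is non-faithful, satisfies (T1), reaches no zero edge, and no contradiction arises. Your fallback does not help either. In this example $g_v\in\kappa\setminus\kappa_T$ already, but Proposition~\ref{Prop: nonfaithful2} needs (T2) to turn such an element into a singular one.

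The missing idea is that (C2) is equivalent to the \emph{conjunction} of (Evr) and $\hat J_K=0$. In the converse direction, (Evr), which comes from topological freeness, is exactly what supplies (T2).

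\emph{Forward direction.} (C2) gives $\hat J_K=0$, as you say. It also gives (Evr). If $g_v^k\neq v$ acts trivially, then $v$ is non-faithful. Since $E_A$ is finite without sources, some $pc\in vE_A^*$ exists with $p$ simple and $c$ a simple cycle. Then (C2) gives $B_{pc}=0$, and Lemma~\ref{lem: fixed path computation} gives $g_v^k|_{pc}=\source(c)$.

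\emph{Converse direction.} Assume (Evr) and $\hat J_K=0$, and let $v$ be non-faithful. By (Evr) and Lemma~\ref{lem: fixed path computation}, every non-faithful vertex $u$ admits a zero path in $uE_A^*$. Now let $p\in vE_A^*$ with $B_p\neq 0$; then $\source(p)$ is again non-faithful. Otherwise Proposition~\ref{Prop: Infinite group characterization} gives $w\in\source(p)E_A^\omega$ with trivial stabilizer. Since $B_p\neq 0$, Lemma~\ref{lem: fixed path computation} then shows $pw$ has trivial stabilizer in $(\hat G_B)_v$, making $v$ faithful. Hence a continuation $q$ with $B_{pq}=0$ exists; if $B_p=0$, take $q=\source(p)$. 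So (T2) holds at $v$. Then $\hat J_K=0$ and Theorem~\ref{Thm:unfaithful J=0} force (T1) to fail at $v$, which is precisely (C2).

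Your Step~1 and the transfer between $K$, $\CC$ and $C^*_r$ via amenability and Theorem~\ref{Thm: J and JC} are fine as they stand.
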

\begin{proof}
	Recall that the simplicity of $K\G_{(\hat G_B, E_A)}$ and $C^{\ast}_r(\G_{(\hat G_B, E_A)})$ coincide by Corollary \ref{Cor: cstar simplicity iff steinberg simplicity}, and if $C^{\ast}_r(\G_{(\hat G_B, E_A)})$ is simple, it is purely infinite simple by Proposition \ref{Prop: simple implies purely infinite}. We argue that these conditions are equivalent to the simplicity of $K\G_{(\hat G_B, E_A)}$.
	
	By Theorem \ref{Thm: ample groupoid simplicity characterisations}, $K\G_{(\hat G_B, E_A)}$ is simple if and only if $\G_{(\hat G_B, E_A)}$ is minimal and topologically free and the singular ideal vanishes. Minimality and topological freeness is equivalent to the conjunction of conditions (Cof), (Cyc), and (Evr). Because $E_A$ is finite and without sources, $(\hat G_B, E_A)$ satisfies (Cof) and (Cyc) if and only if (C1) is satisfied. 
	
	It follows from Theorem \ref{Thm:unfaithful J=0} that (C2) implies that the singular ideal vanishes, and from Lemma \ref{lem: fixed path computation} that (C2) implies condition (Evr) holds. Conversely, suppose $(\hat G_B, E_A)$ satisfies (Evr) and the singular ideal vanishes, and suppose $(\hat G_B)_v$ acts non-faithfully. By the definition of (Evr) and Lemma \ref{lem: fixed path computation}, there must be a zero path in $vE_A^*$. Observe that if $p\in vE_A^*$ is a non-zero path, the isotropy group $(\hat G_B)_{\source(p)}$ acts non-faithfully as well by Proposition \ref{Prop: Infinite group characterization}. In particular, there is a zero path in $\source(p)E_A^*$ so condition (T2) of Theorem \ref{Thm:unfaithful J=0} holds for $v$. The singular ideal vanishes, so condition (T1) of Theorem \ref{Thm:unfaithful J=0} must fail for $v$. It follows that (C2) holds.
	
	For the last statement, observe that (C1) and (C2) do not depend on $K$.
\end{proof}


\begin{Remark}
	\label{Rem: nonfaithful J=0 algorithm}
	
	We observe that on input $(A,B)$ we can effectively check the conditions of Theorem \ref{Thm:unfaithful J=0} and Corollary \ref{Cor: nonfaithful simplicity main result}. In particular, given a vertex $v \in E_A^0$, by Proposition \ref{Prop: Infinite group characterization} we can check if $(\hat G_B)_v$ acts faithfully. Condition (T1) can be verified by exhaustion.
	Condition (T2) is equivalent to checking that whenever $v$ is reachable from $u \in E_A^*$ via a non-zero path, $u$ is reachable from a zero edge. Condition (C2) can be treated similarly, and condition (C1) can be verified using typical graph algorithms.
\end{Remark}


\section{The singular ideal associated to faithful KEP-groupoids}
\label{Sec: faithful}

Throughout the section, let $(A,B)$ be a Katsura pair and let $K$ be any field. We turn our attention to the faithful KEP-groupoid $(G_B, E_A)$. We begin by stating a corollary of Proposition \ref{Prop: faithful groups nonsingular}.

\begin{Corollary}
\label{Cor: infinite groups nonsingular}
    Let $(G_B, E_A)$ be a faithful KEP-groupoid and $v \in E_A^{0}$. Suppose $(G_B)_v$ is infinite. Then $(G_B)_v$ does not support a non-zero singular element in $KS_{(G_B, E_A)}$.
\end{Corollary}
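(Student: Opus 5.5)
The plan is to reduce the statement to Proposition \ref{Prop: free orbits prevent singular elements}, using the free orbit supplied by Proposition \ref{Prop: Infinite group characterization}. An alternative is to reduce it to Proposition \ref{Prop: faithful groups nonsingular}, but that proposition lives in $KS_{(\hat G_B, E_A)}$, whereas the statement concerns $KS_{(G_B,E_A)}$. The most direct route is therefore to apply the free-orbit criterion inside the faithful self-similar groupoid $(G_B,E_A)$ itself. Proposition \ref{Prop: free orbits prevent singular elements} is stated for an arbitrary self-similar groupoid, with no faithfulness assumption needed either way, so it applies here.

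\textbf{Step 1.} Assume $(G_B)_v$ is infinite. By Proposition \ref{Prop: Infinite group characterization}(1), there are a simple path $p\in vE_A^*$ and a simple cycle $c\in \source(p)E_A^*$ with $B_p\neq 0$ and $\frac{B_c}{A_c}\notin\ZZ$. Moreover, the action of $(\hat G_B)_v=(G_B)_v$ has a free orbit. Concretely, the proof of that proposition shows that no $g_v^k$ with $k\neq 0$ fixes $pc^\omega$. We want freeness of the $(G_B)_v$-action on the orbit $O$ of $pc^\omega$ in $vE_A^\omega$, and we check it as follows. The isotropy group is abelian. Hence, if $h$ fixes $g(pc^\omega)$, then $g^{-1}hg=h$ fixes $pc^\omega$, which forces $h$ to be trivial. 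Thus the stabilizer of every point of $O$ is trivial.

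\textbf{Step 2.} The quotient $\pi$ restricted to $(\hat G_B)_v$ is injective, because the action is faithful there. So the generator $g_v$ of $(G_B)_v$ acts on $vE_A^*$, and on $vE_A^\omega$, exactly as in the KEP-groupoid. Therefore the free orbit found in Step 1 is a free orbit for $H=(G_B)_v$, viewed as a subgroup of the self-similar groupoid $(G_B,E_A)$ acting on $vE_A^*$.

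\textbf{Step 3.} Apply Proposition \ref{Prop: free orbits prevent singular elements} with $H=(G_B)_v$. It yields that $H$ supports no non-zero singular element of $KS_{(G_B,E_A)}$, for any field $K$.

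The only step needing care is Step 1: passing from the triviality of the stabilizer at the single point $pc^\omega$ to freeness on its whole orbit. Commutativity of the isotropy group settles this. Step 2 also needs care, to make sure the identification $(\hat G_B)_v=(G_B)_v$ is compatible with the self-similar structure. This holds because sections are computed by the same rule (\ref{EQ: action equation}) in both groupoids.
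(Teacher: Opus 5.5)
Your proof is correct and is essentially the paper's argument: the paper states this as a corollary of Proposition \ref{Prop: faithful groups nonsingular}, which is itself just the combination of Propositions \ref{Prop: Infinite group characterization} and \ref{Prop: free orbits prevent singular elements} that you use. Applying Proposition \ref{Prop: free orbits prevent singular elements} directly inside $(G_B,E_A)$, as you do, is a clean way to make that deduction precise, since the statement concerns $KS_{(G_B,E_A)}$ rather than $KS_{(\hat G_B,E_A)}$.
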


Let $G_{\text{fin}}$ be the subgroupoid of $G_B$ consisting of all finite subgroups of $G_B$. More precisely, $G_{\text{fin}}$ contains all finite isotropies of $G_B$ as well as the unit space. It follows from \cite[Cor. 3.3]{HUME_WHITTAKER_2025} that $G_{\text{fin}}$ is closed under taking sections, and thus the action of $G_{\text{fin}}$ on $E_A^*$ inherited from $G_B$ is self-similar. Observe furthermore by Propositions \ref{prop:tight_characterization} and \ref{prop:singular_characterization} that $a\in KS_{(G_{\text{fin}}, E_A)}$ is singular, respectively tight, in $KS_{(G_B, E_A)}$ exactly if
that $a$ is singular, respectively tight in $KS_{(G_{\text{fin}}, E_A)}$. Thus by Proposition \ref{prop:how_we_check_J=0} and Corollary \ref{Cor: infinite groups nonsingular}, $KS_{(G_{\text{fin}}, E_A)}$ contains singular and not tight elements exactly if $KS_{(G_B, E_A)}$ does.

Since $G_{\text{fin}}$ is finite, it is a \emph{contracting} self-similar groupoid (see \cite[Def. 3.3]{BrownloweEtAlKKDualSelfsimliarGroupoids}).
By Proposition \ref{Prop: Infinite group characterization}, on input $A, B$ we can compute the finite (and thus contracting) self-similar groupoid $(G_{\text{fin}}, E_A)$, therefore \cite[Cor. 8.7]{AAKRE2026130446} implies that on a further input of the characteristic of $K$, it is decidable whether $J_K=0$. This algorithm however runs in exponential time in the size of the nucleus, which itself can be exponential in the input $A,B$.
For the rest of the section, we work towards describing a more efficient algorithm, which in particular also yields that just like in the non-faithful case, the vanishing of $J_K$ is independent of the field $K$.

\subsection{Stable subgroups}

We recall that by \cite[Thm. 7.2]{AAKRE2026130446}, if $KS_{(G_{\text{fin}}, E_A)}$ contains a singular element which is not tight, then this can be assumed to be supported on a so-called \emph{recurring subgroup}.
Given a self-similar groupoid $(G,E)$, let $v \in E^0$ be a vertex and $p \in vE^*v$ be a cycle in $E$. The associated recurring subgroup of $G$ is
$$H_p=\{g \in (G)_v \colon g(p)=p, g|_p=g\}.$$
Thus the following proposition holds.

\begin{Proposition}
	\label{Prop: singular supported on recurring subgroups}
Suppose that $KS_{(G_B, E_A)}$ contains an element which is singular but not tight. Then there exists a finite isotropy $(G_B)_v$ and a cycle $p \in vE_A^*v$ such that $H_p$ supports a singular element which is not tight.
\end{Proposition}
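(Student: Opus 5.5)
The plan is to chain the reductions already made in this section with \cite[Thm. 7.2]{AAKRE2026130446}.

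\emph{Step 1: pass to $G_{\text{fin}}$.} Assume $KS_{(G_B, E_A)}$ contains an element that is singular but not tight. By the discussion preceding the statement, $KS_{(G_{\text{fin}}, E_A)}$ then also contains such an element. That discussion combines Proposition \ref{prop:how_we_check_J=0} (which localizes to a single isotropy group $(G_B)_v$) with Corollary \ref{Cor: infinite groups nonsingular} (which rules out infinite isotropies). Using Propositions \ref{prop:tight_characterization} and \ref{prop:singular_characterization}, being singular or tight is the same in both semigroup algebras.

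\emph{Step 2: apply the recurring-subgroup theorem.} The groupoid $(G_{\text{fin}}, E_A)$ is a finite self-similar groupoid, hence contracting, and it acts on a finite graph with no sources. So \cite[Thm. 7.2]{AAKRE2026130446} applies and yields a vertex $v$, a cycle $p \in vE_A^*v$, and a singular, non-tight element supported on the recurring subgroup
$$H_p = \{g \in (G_{\text{fin}})_v \colon g(p)=p,\ g|_p=g\}.$$
Two points need checking when importing that theorem. First, it is stated for faithful self-similar groupoids; $G_{\text{fin}}$ is faithful because it is a subgroupoid of the faithful $G_B$. Second, its notion of recurring subgroup must match the definition recalled just above the statement.

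\emph{Step 3: transfer back to $G_B$.} The support must be a non-empty subset of a group, which forces $(G_{\text{fin}})_v$ to be nontrivial and therefore equal to the finite isotropy $(G_B)_v$. Since sections in $G_{\text{fin}}$ agree with those in $G_B$, the group $H_p$ is the same whether computed in $G_{\text{fin}}$ or in $G_B$. Applying Step 1's equivalence once more, the element is singular and not tight in $KS_{(G_B,E_A)}$.

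\emph{Main obstacle.} The only delicate point is verifying that the hypotheses of \cite[Thm. 7.2]{AAKRE2026130446} are met by $(G_{\text{fin}},E_A)$: it must be contracting, faithful, and on a graph with no sources. Also, if $(G_{\text{fin}})_v$ is trivial, $H_p=\{v\}$ is still fine as a recurring subgroup of a finite isotropy. I expect this hypothesis check to be bookkeeping rather than a genuine difficulty.
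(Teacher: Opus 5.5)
Your proposal follows the paper's route: the discussion before the statement (Proposition \ref{prop:how_we_check_J=0} together with Corollary \ref{Cor: infinite groups nonsingular}) transfers the problem to $(G_{\text{fin}},E_A)$, and \cite[Thm. 7.2]{AAKRE2026130446} then supplies the recurring subgroup. One justification in Step 3 needs fixing, though the conclusion is correct.

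A non-empty support does not by itself force $(G_{\text{fin}})_v$ to be nontrivial, since the trivial group is non-empty. The correct reason is that a nonzero multiple $kv$ is never singular, because $kv\cdot q=kq\neq 0$ for $q\in vE_A^*$. Hence $H_p$, and so $(G_{\text{fin}})_v$, is nontrivial, which means $(G_B)_v$ is finite.

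This also makes your closing remark about $H_p=\{v\}$ unnecessary. That remark would in fact be wrong when $(G_B)_v$ is infinite, since then $(G_B)_v$ is not a finite isotropy.
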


We shall later need the following observations.

\begin{Lemma} 
	\label{Lem:recurring subgroups congjugate}
	Let $pq$ be a cycle in $E_A^*$ and consider the recurring subgroups $H_{pq}$ and $H_{qp}$. Then
	\begin{enumerate}
		\item The map $|_p: H_{pq} \to H_{qp}$ is a group isomorphism with inverse $|_q: H_{qp}\rightarrow H_{pq}$. 
		\item If $p, q$ are cycles themselves, and $(G_B)_{\source(p)}$ is finite then furthermore $H_{pq}=H_{qp}$.
	\end{enumerate}
\end{Lemma}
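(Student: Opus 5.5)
Part (1) is a direct check of the two defining conditions of the recurring subgroup, using $g|_{pq}=(g|_p)|_q$. Let $u=\range(p)=\source(q)$ and $w=\source(p)=\range(q)$, and take $g\in H_{pq}$, so $g\in(G_B)_u$, $g(pq)=pq$ and $g|_{pq}=g$.

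\emph{Step 1: $|_p$ lands in $H_{qp}$.} Put $h:=g|_p$. Since $g(pq)=g(p)\,g|_p(q)$ and $g(p)$ has the same length as $p$, the equality $g(pq)=pq$ forces $g(p)=p$ and $h(q)=q$; moreover $h\in(G_B)_w$ because $G_B$ is a group bundle. Then $h|_q=g|_{pq}=g$. Hence
\[
h(qp)=h(q)\,h|_q(p)=q\,g(p)=qp,\qquad h|_{qp}=(h|_q)|_p=g|_p=h,
\]
so $h\in H_{qp}$.

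\emph{Step 2: $|_p$ is a homomorphism.} For $g,g'\in H_{pq}$ we use $(gg')|_p=g|_{g'(p)}\,g'|_p$. Since $g'(p)=p$, this equals $g|_p\,g'|_p$.

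\emph{Step 3: inverse.} By symmetry (swap $p$ and $q$), $|_q$ maps $H_{qp}$ to $H_{pq}$ and is a homomorphism. Step 1 showed $(g|_p)|_q=g$, and symmetrically $(h|_q)|_p=h$ for $h\in H_{qp}$. So the two maps are mutually inverse and $|_p$ is an isomorphism. None of this needs the Katsura structure.

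For part (2), suppose $p$ and $q$ are cycles, so $u=w$, and $(G_B)_u$ is finite, say of order $m$. The plan is to identify the section maps explicitly with Lemma \ref{lem: fixed path computation}(2). Write $g=g_u^k$. If $g$ fixes a path $r\in uE_A^*u$, then $g|_r=g_u^{l}$ with $l\equiv kB_r/A_r \pmod m$. Note that $kB_r/A_r$ is an integer by Lemma \ref{Lem:fixed path}.

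Take $g=g_u^k\in H_{pq}$. Its membership condition $g|_{pq}=g$ reads $k\,\frac{B_pB_q}{A_pA_q}\equiv k \pmod m$, and this expression is symmetric in $p$ and $q$. So the only remaining point is to show $g(qp)=qp$; then $g|_{qp}=g$ follows from the same congruence and $g\in H_{qp}$. The reverse inclusion holds by symmetry.

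To get $g(qp)=qp$, I would first use Step 1: $g(p)=p$ and $g|_p=g_u^{k'}$ with $k'\equiv kB_p/A_p \pmod m$, and $g|_p$ fixes $q$. By Lemma \ref{Lem:fixed path} together with Lemma \ref{lem: fixed path computation}, $g$ fixes $q$ exactly when $kB_{q'}/A_{q'}\in\ZZ$ for every prefix $q'$ of $q$. This divisibility depends on $k$ as an integer, while we only control $k$ modulo $m$; but $g_u^k$ depends only on $k\bmod m$, so we may choose any representative. We know $k'B_{q'}/A_{q'}\in\ZZ$ for the integer $k'=kB_p/A_p$, and we want the same with $k$ in place of $k'$.

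This is the main obstacle. I would handle it through faithfulness of the finite group. Since $g|_{pq}=g$, iterating gives $g$ fixing $(pq)^n$ with $g|_{(pq)^n}=g$ for all $n$. Then $g(q)$ can be compared with $q$ through the path $(pq)^n$: we have $g((pq)^n)=(pq)^n$, so $g(p)=p$ and $g|_p(q)=q$.

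I would then use the cyclic structure. Both $g$ and $g|_p$ lie in the same finite cyclic group $(G_B)_u$, and restriction to the cycle $pq$ is an automorphism of $H_{pq}$. Hence $g|_p$ is a power of $g$ and, conversely, $g=(g|_p)^j$ for some $j$, up to faithfulness. Since $g|_p$ fixes $q$, so does $g=(g|_p)^j$, which gives $g(qp)=g(q)\,g|_q(p)$.

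It remains to see that $g|_q$ fixes $p$. Now $g|_q$ is likewise a power of $g$, because sections of fixed paths in a cyclic group are determined by multiplication by $B/A$ as in Lemma \ref{lem: fixed path computation}, and $g$ fixes $p$. So $g|_q$ fixes $p$, completing $g(qp)=qp$.

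I expect the care needed to lie in justifying that $g$ is a power of $g|_p$, using that $g=(g|_p)|_q$ and that sections are powers in a finite cyclic group.
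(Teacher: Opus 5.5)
Your part (1) is correct and is essentially the paper's argument. The paper phrases Step 1 as $g(pqp)=pqp$, so $g|_p(qp)=qp$, and $(g|_p)|_{qp}=(g|_{pq})|_p=g|_p$. The homomorphism and mutual-inverse checks are the same as yours.

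Part (2) has a genuine gap, exactly at the step you flag. You never justify the claims ``$g|_p$ is a power of $g$'', ``$g=(g|_p)^j$'' and ``$g|_q$ is a power of $g$''. The first reason you give, that restriction to $pq$ is an automorphism of $H_{pq}$, is vacuous. By definition $|_{pq}$ is the identity on $H_{pq}$, and it says nothing about $g|_p$, which lies in $H_{qp}$. The second reason is that sections along fixed cycles are ``multiplication by $B/A$'' as in Lemma~\ref{lem: fixed path computation}. That only produces a power of $g$ if $B_r/A_r\in\mathbb{Z}$ for the relevant cycle $r$. You only record the weaker fact $kB_r/A_r\in\mathbb{Z}$. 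In general that does not suffice: $k=2$, $B_r/A_r=1/2$ gives $g|_r=g_u^{1}\notin\langle g_u^2\rangle$ when $m$ is even. So the key membership $g\in\langle g|_p\rangle$ is missing. That membership is precisely what turns ``$g|_p$ fixes $q$'' into ``$g$ fixes $q$''.

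The missing idea is the one the paper uses, and it makes the element-wise computation unnecessary. By part (1), $H_{pq}\cong H_{qp}$, so they have the same order. Both are subgroups of the finite cyclic group $(G_B)_u$, which has exactly one subgroup of each order, so $H_{pq}=H_{qp}$.

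The same idea also repairs your own route. The map $|_p$ is injective on $\langle g\rangle\le H_{pq}$, so $\langle g|_p\rangle$ and $\langle g\rangle$ have equal order in a cyclic group and hence coincide, giving $g=(g|_p)^j$. Since $g|_p$ fixes $q$, you then get $g|_q=((g|_p)^j)|_q=((g|_p)|_q)^j=g^j$, which fixes $p$, and the rest of your argument goes through.

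Alternatively, you could close the gap by proving $B_c/A_c\in\mathbb{Z}$ for every cycle $c\in uE_A^*u$ when $(G_B)_u$ is finite. This follows from Proposition~\ref{Prop: Infinite group characterization} together with Lemma~\ref{Lem: factoring cycles into simple cycles}. Then directly $g=(g|_p)|_q=(g|_p)^{B_q/A_q}$ and $g|_q=g^{B_q/A_q}$.
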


\begin{proof}
	For distinct $g,h\in H_{pq}$, we have $(gh)|_{p} = g|_{h(p)}h|_{p} = g|_ph|_p$ by \cite[Lemma 3.4]{LACA2018268}, so $|_p$ is a group homomorphism from $H_{pq}$. For each $g\in H_{pq}$, we have $g(pqp)=pqp$ so $g|_{p}(qp) = qp$. We also have $(g|_{p})|_{qp} = (g|_{pq})|_{p} = g|_{p}$, so $(H_{pq})|_{p}\leq H_{qp}$. Similarly, $|_q: H_{qp} \to H_{pq}$ is a group homomorphism.
	
	The maps are mutually inverse bijections, and hence isomorphisms, because for each $g\in H_{pq}$ and each $h\in H_{qp}$, he have $g=(g|_p)|_q$ and $h=(h|_q)|_p$.
	
	For (2), observe that $H_{pq}$ and $H_{qp}$ are isomorphic subgroups of the finite cyclic group $(G_B)_{\source(p)}$, and therefore $H_{pq}=H_{qp}$.
\end{proof}

By Proposition \ref{Prop: singular supported on recurring subgroups}, it suffices to find the recurring subgroups of finite isotropies, and search for singular elements here. A key advantage of this approach is that non-zero elements supported on recurring subgroups are automatically not tight.
Existing methods to find recurring subgroups are not computationally practical, the main issue being that not all recurring subgroups arise as recurring subgroups of simple cycles. Thus instead, we look at a larger class of subgroups we term \emph{stable}, which are easier to find but still guarantee non-tightness.

\begin{Lemma}
\label{Lem:restriction to c}
Suppose $v \in E_A^0$ is such that $(G_B)_v$ is finite.
Let $c$ be a cycle in $vE_A^*v$ and suppose $H \leq \Stab c \leq (G_B)_v$. Then $|_c$ defines a group homomorphism $|_c \colon H \to H, h \mapsto h|_c$.
\end{Lemma}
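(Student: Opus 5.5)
We must show three things: $h|_c$ lies in $(G_B)_v$, the map is a homomorphism, and its image lies in $H$. The first two are routine; the third is where finiteness of $(G_B)_v$ enters.

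Since $c \in vE_A^*v$ is a cycle and $h \in H \leq \Stab c$ fixes $c$, the section $h|_c$ lies in $(G_B)_{\source(c)} = (G_B)_v$. For the homomorphism property, take $g,h\in H$ and use the cocycle identity $(gh)|_c = g|_{h(c)}\,h|_c$ from \cite[Lemma 3.4]{LACA2018268}, exactly as in the proof of Lemma \ref{Lem:recurring subgroups congjugate}. Because $h(c)=c$, this gives $(gh)|_c = g|_c\, h|_c$. So $|_c$ is a group homomorphism $\Stab c \to (G_B)_v$, and in particular a homomorphism on $H$.

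It remains to show $h|_c\in H$. The faithful isotropy group $(G_B)_v$ is a quotient of $\ZZ$, hence cyclic, and it is finite by assumption. Write $h = g_v^k$. Lemma \ref{lem: fixed path computation}(2) gives $h|_c = g_v^{l}$ with $l \equiv k\cdot \frac{B_c}{A_c} \pmod m$, where $m=|(G_B)_v|$. Here $\frac{kB_c}{A_c}\in\ZZ$ by Lemma \ref{Lem:fixed path}, since $h$ fixes $c$.

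I expect this step to be the main obstacle: one must argue carefully that $\frac{B_c}{A_c}$ acts as a well-defined scalar on the subgroup $H$. The cleanest route avoids rationals altogether. In a finite cyclic group every subgroup is characteristic, and any homomorphism $\varphi\colon H\to C$ into the cyclic group $C=(G_B)_v$ sends a generator $x$ of $H$ to an element $\varphi(x)$ with $\varphi(x)^{|H|}=\varphi(x^{|H|})=1$. Hence the order of $\varphi(x)$ divides $|H|$. In a finite cyclic group, the unique subgroup of order $|H|$ is $H$ itself, and it contains every element whose order divides $|H|$. Therefore $\varphi(x)\in H$, so $\varphi(H)\subseteq H$.

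Applying this to $\varphi = |_c$ restricted to $H$ gives $|_c\colon H\to H$, completing the proof. Note that the only place finiteness is used is this last step, where we need $(G_B)_v$ to be finite cyclic so that it has a unique subgroup of each order dividing its order.
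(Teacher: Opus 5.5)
Your proof is correct, but the key step, that $|_c$ maps $H$ into $H$, takes a different route from the paper's.

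\textbf{The paper's route.} It writes $H=\langle g_v^m\rangle$ and uses Lemma \ref{lem: fixed path computation}(2) to get the explicit congruence (\ref{eq: c-morhpism formula}), $f(k)\equiv k\frac{B_c}{A_c}$ modulo $|(G_B)_v|$. It then invokes Proposition \ref{Prop: Infinite group characterization}: finiteness of $(G_B)_v$ forces $\frac{B_c}{A_c}\in\ZZ$, so multiples of $m$ are sent to multiples of $m$. The homomorphism property is the same cocycle computation $(h_1h_2)|_c=h_1|_{h_2(c)}h_2|_c$ that you give.

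\textbf{Your route.} You first show that $|_c$ is a homomorphism $H\to(G_B)_v$. You then use a purely group-theoretic fact: in a finite cyclic group, the elements whose order divides $|H|$ form exactly $H$. Hence any homomorphism from $H$ into the ambient group lands in $H$.

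\textbf{Comparison.} Your argument is more elementary and more general. It needs no KEP-specific arithmetic and sidesteps the integrality of $\frac{B_c}{A_c}$, which in the paper depends on factoring cycles into simple cycles inside the proof of Proposition \ref{Prop: Infinite group characterization}. It works verbatim for any self-similar groupoid whose isotropy group at $v$ is finite cyclic. What the paper's route buys is the explicit formula (\ref{eq: c-morhpism formula}) itself. That formula is reused immediately to compute the kernel of $|_c$ in (\ref{eqn:order-of-k}), and hence the maximal $c$-stable subgroup and the algorithm. With your proof, that computation would still have to be done separately later.

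The paragraph where you begin the $\frac{B_c}{A_c}$ computation and then abandon it plays no role in the argument and can be deleted.
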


\begin{proof}
Let $(G_B)_v=\langle g_v \rangle$, $H=\langle g_v^m \rangle$, and suppose $|H|=M$ so $|(G_B)_v|=M m$. The section $|_c$ maps $(G_B)_v$ to $(G_B)_v$ and thus we can define a function $f \colon [0, Mm-1] \to [0, Mm-1]$ by $g_v^k|_c=g_v^{f(k)}$.

By Lemma \ref{lem: fixed path computation}
we have
\begin{equation}
	\label{eq: c-morhpism formula}
f(k) \equiv k \frac{B_c}{A_c} \pmod{Mm}.
\end{equation}
Since $(G_B)_v$ is finite, we must have $\frac{B_c}{A_c} \in \mathbb Z$ by Proposition \ref{Prop: Infinite group characterization}. So if $m$ divides $n$, it also divides $f(n)$, which shows that $H|_c \subseteq H$.

To show that $|_c$ is a group homomorphism, observe that for any $h_1, h_2 \in H$, we have 
\[(h_1h_2)|_c = h_1|_{h_2(c)}h|_c = h_1|_ch_2|_c\]
by \cite[Lemma 3.4]{LACA2018268}.
\end{proof}

Observe by (\ref{eq: c-morhpism formula}) that $f(km)=0$ exactly if $Mm$ divides $km \frac{B_c}{A_c}$, so the kernel of $|_c \colon H \to H$ is generated by
$(g_v^{m})^k$ where 
\begin{equation}
\label{eqn:order-of-k}
k=\frac{M}{ \gcd\left(M, \frac{B_c}{A_c}\right)}
\end{equation}
and $M=|H|$.
Moreover, $mk$ is the minimal exponent of $g_v$ which generates the kernel.

\begin{Definition}\label{Def: c-stable subgroups}
	Let $(G_B,E_A)$ be a faithful KEP-groupoid, and suppose $v \in E_A^{0}$ is such that $(G_B)_v$ is finite. Let $H \leq (G_B)_v$ be a subgroup and $c \in  vE_A^*v$ a cycle. We call $H$ \emph{$c$-stable} if $H$ stabilizes $c$, and the homomorphism $|_c \colon H \to H$ defined by $h \mapsto h|_c$ is a bijective.
\end{Definition}
Observe that if $B_c=0$, then the only $c$-stable subgroup is the trivial group.

\begin{Lemma}
\label{Lem:cstable} 
Let $v \in E_A^0$ be such that $(G_B)_v$ is finite, and let $c \in vE_A^*v$ be a cycle. 
Then $(G_B)_v$ has a unique maximal $c$-stable subgroup, which we denote by $G_c$.
\end{Lemma}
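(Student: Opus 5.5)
The plan is to show that the join of any two $c$-stable subgroups is again $c$-stable. Since $(G_B)_v$ is finite, the maximal $c$-stable subgroup $G_c$ can then be taken to be the subgroup generated by all $c$-stable subgroups. Note that $(G_B)_v=\langle g_v\rangle$ is a finite cyclic group, say of order $n$, so its subgroups are totally ordered by divisibility of their orders. In particular, the join of two subgroups is simply the larger one. The trivial subgroup is $c$-stable, so the family of $c$-stable subgroups is non-empty.

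First I will handle stabilization. If $H_1,H_2$ both stabilize $c$, then $H_1H_2=\langle H_1\cup H_2\rangle$ also stabilizes $c$, because $\Stab c$ is a subgroup of $(G_B)_v$. In the cyclic setting this is immediate anyway, since $H_1H_2$ equals either $H_1$ or $H_2$.

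Next I will check bijectivity of the section map on the join. By Lemma \ref{Lem:restriction to c}, for any $H\le \Stab c$ the map $|_c$ is an endomorphism of $H$. In terms of exponents it is multiplication by the integer $\frac{B_c}{A_c}$ modulo $n$, by (\ref{eq: c-morhpism formula}). So for $H=\langle g_v^m\rangle$ of order $M$, the map is bijective exactly when its kernel is trivial. By (\ref{eqn:order-of-k}), this happens exactly when $\gcd\left(M,\frac{B_c}{A_c}\right)=1$. Thus $H\le\Stab c$ is $c$-stable iff $|H|$ is coprime to $\frac{B_c}{A_c}$. (If $B_c=0$, only $M=1$ qualifies, which matches the earlier observation.) Now suppose $H_1,H_2$ are $c$-stable. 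Their join has order $\lcm(|H_1|,|H_2|)$, and this is still coprime to $\frac{B_c}{A_c}$. It is also contained in $\Stab c$, so the join is $c$-stable.

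Finally, since $(G_B)_v$ has finitely many subgroups, iterating this argument shows that the join of all $c$-stable subgroups is $c$-stable. This join is then the unique maximal $c$-stable subgroup. Explicitly, it is the unique subgroup of $\Stab c$ whose order is the largest divisor of $|\Stab c|$ coprime to $\frac{B_c}{A_c}$.

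The main obstacle is making sure Lemma \ref{Lem:restriction to c} applies to the join, i.e. that the join lies in $\Stab c$. As noted above, this is automatic because $\Stab c$ is a subgroup. The rest is the elementary kernel computation.
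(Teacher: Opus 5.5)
Your proof is correct, but one side remark is false and should be deleted. Subgroups of a finite cyclic group are \emph{not} totally ordered, so the join of two subgroups is generally not ``the larger one'', and $H_1H_2$ need not equal $H_1$ or $H_2$. For instance, for $b\in\{1,5\}$ in Example \ref{Ex: example 2}, $\Stab c=(G_B)_{v_1}\cong C_6$. There the subgroups of orders $2$ and $3$ are both $c$-stable and incomparable, and their join is all of $C_6$. If subgroups were totally ordered, the lemma would be trivial and your join argument unnecessary. Fortunately nothing in your argument rests on the remark. Stabilization of the join follows from $\Stab c$ being a subgroup, and you compute the order of the join correctly as $\lcm(|H_1|,|H_2|)$.

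Your route differs from the paper's. The paper works inside $\Stab c=\langle g\rangle$ of order $M$. There a subgroup $\langle g^n\rangle$ is $c$-stable iff it meets $\ker(|_c)=\langle g^k\rangle$ trivially, i.e.\ iff $\lcm(k,n)=M$. The paper then uses the prime factorizations of $M$ and $k$ to write down the unique divisibility-minimal such $n$ (formula (\ref{eq: max c-stable generator})), whose subgroup consequently contains every $c$-stable subgroup.

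You instead apply (\ref{eqn:order-of-k}) to each candidate $H$ directly, which gives the criterion $\gcd(|H|,B_c/A_c)=1$. You then obtain the maximum abstractly from closure under joins. This is shorter and avoids prime factorization. It also gives a more transparent description: $|G_c|$ is the largest divisor of $|\Stab c|$ coprime to $B_c/A_c$. This agrees with the paper's formula: in its notation, $p_i$ contributes $p_i^{\alpha_i}$ exactly when $\beta_i=\alpha_i$, i.e.\ when $p_i\nmid B_c/A_c$.

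Your description also makes the factorization-free computation in Section \ref{Sec: complexity analysis} immediate: repeatedly divide $|\Stab c|$ by its current gcd with $B_c/A_c$. The paper instead needs a separate lemma on the sequence $s_{i+1}=s_i/\gcd(s_i,M/s_i)$. The paper's formulation, in exchange, works directly with the exponents $k$ and $n$ that Algorithm \ref{alg:order of cstables} manipulates.
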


\begin{proof}
Consider the morphism  $|_c \colon \Stab c \to \Stab c$ from Lemma \ref{Lem:restriction to c} for $H=\Stab c$, and let $\ker(|_c)$ denote its kernel.
By definition, a subgroup $G \leq \Stab c \leq (G_B)_v$ is $c$-stable exactly if $G \cap \ker(|_c) =v$.
Following the notation of Lemma \ref{Lem:restriction to c} and the following equation (\ref{eqn:order-of-k}), put $\Stab c=\langle g\rangle$ where $g=g_v^r$, let $M:=|\Stab c|,$ suppose $G=\langle g^n \rangle$ and $\ker(|_c)=\langle g^k \rangle $, where $n$ and $k$ are assumed to be minimal.
Then $G$ is $c$-stable exactly if $\lcm(k,n)=M$.
If $M$ and $k$ have prime factorizations 
$$M=\prod_{i \in \mathbb N} p_i^{\alpha_i} \hbox{ and } k=\prod_{i \in \mathbb N} p_i^{\beta_i},$$
then $\lcm(k,n)=M$ exactly if 
$n=\prod p_i^{\gamma_i}$ with $\max(\beta_i,\gamma_i)=\alpha_i$. There is a unique minimal (with respect to divisibility) such $n$ with this property, namely given by 
\begin{equation}\label{eq: max c-stable generator}
	\gamma_i=
	\begin{cases}
		0 &\hbox{if } \beta_i=\alpha_i\\
		\alpha_i & \hbox{otherwise}.
	\end{cases}
\end{equation}
The corresponding subgroup $G_c:=\langle g^n \rangle$ is the unique maximal $c$-stable subgroup of $(G_B)_v$.
\end{proof}
Our next goal is to show that $|G_c|$ is computable on input $A,B$ and a cycle $c$. The missing piece is computing $|\Stab c|$: 

\begin{Lemma}\label{lem:stabilizer size}
	Suppose $|(G_B)_v|=N$ and $c \in vE_A^*v$ is such that $B_c \neq 0$. Then $|\Stab c|=\frac{N}{r}$ where
	$$r=\lcm\left\{ \frac{A_{c'}}{\gcd(A_{c'}, |B_{c'}|)} \colon c' \hbox{ is a prefix of } c\right\}.$$
\end{Lemma}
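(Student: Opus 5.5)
The plan is to describe $\Stab c$ as the set of exponents $k$ for which $g_v^k$ fixes $c$, using Lemma \ref{Lem:fixed path}. We work in the faithful group $(G_B)_v=\langle g_v\rangle$ of order $N$. First we check that the fixing criterion descends from $(\hat G_B)_v$ to $(G_B)_v$. The image of $g_v^k$ in $(G_B)_v$ fixes $c$ exactly when $g_v^k\in(\hat G_B)_v$ does, because $\pi$ preserves the action. So by Lemma \ref{Lem:fixed path}, $g_v^k$ fixes $c$ if and only if $\frac{kB_{c'}}{A_{c'}}\in\ZZ$ for every prefix $c'$ of $c$. This is also where we use the hypothesis $B_c\neq 0$: it forces $B_{c'}\neq 0$ for every prefix $c'$, so every term in the $\lcm$ is well defined and finite, and no prefix condition degenerates. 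For the empty prefix the term equals $1$.

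Next we simplify each condition. For a single prefix $c'$, write $d=\gcd(A_{c'},|B_{c'}|)$. Then $\frac{kB_{c'}}{A_{c'}}\in\ZZ$ holds if and only if $\frac{A_{c'}}{d}$ divides $k$, since $B_{c'}/d$ and $A_{c'}/d$ are coprime. Taking all prefixes together, $g_v^k$ fixes $c$ if and only if $r\mid k$, where $r$ is the $\lcm$ in the statement. It follows that the set of integers $k$ with $g_v^k\in\Stab c$ is exactly $r\ZZ$.

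Finally we pass to the quotient $\ZZ/N\ZZ\cong(G_B)_v$, where the set of good exponents becomes $r\ZZ+N\ZZ=\gcd(r,N)\ZZ$. To conclude $|\Stab c|=N/r$ we must show $r\mid N$. Since $g_v^N$ is trivial in $(G_B)_v$ it fixes $c$, so $N\in r\ZZ$, that is, $r\mid N$. Alternatively, one can argue via Proposition \ref{Prop: Infinite group characterization}(2): every prefix $c'$ has $B_{c'}\neq0$ and $\frac{B_{c'}}{A_{c'}}$ times $N$ is integral by Lemma \ref{Lem: OG kernel description}. Hence $\Stab c=\langle g_v^r\rangle$ has order $N/r$.

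The main obstacle is the bookkeeping between $\hat G_B$ and $G_B$. The clean criterion lives in the unfaithful group, so we have to verify that $g_v^N$, being in the kernel, satisfies the prefix integrality. This is exactly Lemma \ref{Lem: OG kernel description} applied to the prefixes of $c$, which gives $r\mid N$ and closes the argument.
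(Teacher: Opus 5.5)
Your proof is correct and is essentially the paper's argument: Lemma~\ref{Lem:fixed path} shows $g_v^k\in\Stab c$ if and only if each $\frac{A_{c'}}{\gcd(A_{c'},|B_{c'}|)}$ divides $k$, i.e.\ if and only if $r\mid k$. Your observation that $g_v^N$ is trivial in $(G_B)_v$, so that $r\mid N$, makes explicit the step the paper compresses into ``the claim follows''. One small remark: the hypothesis $B_c\neq 0$ is not needed for the $\lcm$ to be well defined, since a prefix with $B_{c'}=0$ would simply contribute $\frac{A_{c'}}{\gcd(A_{c'},0)}=1$.
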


\begin{proof}
	By Lemma \ref{Lem:fixed path}, we have $g_v^k \in \Stab c$ exactly if $\frac{A_{c'}}{\gcd(A_{c'}, |B_{c'}|)}$ divides $k$ for all prefixes $c'$ of $c$. The claim follows.
\end{proof}

\begin{Remark}
	\label{Rem: G_c computable}
	It follows that for a vertex $v\in E_A^0$ with $N=|(G_B)_v|$ and a cycle $c\in vE_A^*v$ with $B_c \neq 0$, we can compute the order of $G_c$ by applying Lemma \ref{lem:stabilizer size} to obtain $M=|\Stab c|$ and $r=\frac{N}{M}$, so that $\Stab c=\langle g_v^r \rangle$. We then apply (\ref{eqn:order-of-k}) for $H=\Stab c$ to compute $k$, 
	and (\ref{eq: max c-stable generator}) to obtain the number $n$ such that $G_c=\langle g_v^{rn} \rangle$. The order of $G_c$ is then $\frac{N}{nr}=\frac{M}{n}$. We remark that one does not need to prime factorize to compute $n$, as we shall see in Section \ref{Sec: complexity analysis}.
\end{Remark}

Observe that recurring subgroups are, by definition, stable. Thus Proposition \ref{Prop: singular supported on recurring subgroups} also implies that it suffices to search for singular elements on maximal stable subgroups (we will later see that it suffices to consider those which belong to simple cycles).

As we mentioned, non-zero singular elements supported on stable subgroups are never tight:

\begin{Proposition}
	\label{Prop: c-stable never tight}
Let $H$ be a $c$-stable subgroup, and let $0\neq a\in KH$. Then $a \notin T_K$.
\end{Proposition}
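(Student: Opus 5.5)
We argue via Proposition \ref{prop:tight_characterization}: it suffices to exhibit one infinite path $w\in E_A^\omega$ such that $aq\neq 0$ in $KS_{(G_B,E_A)}$ for every prefix $q$ of $w$. The natural candidate is $w=c^\omega\in vE_A^\omega$, where $c\in vE_A^*v$ is the cycle for which $H$ is $c$-stable. Its prefixes are of the form $c^n c'$ with $n\ge 0$ and $c'$ a proper prefix of $c$. Moreover, if $ac^{n+1}\neq 0$ then $ac^nc'\neq 0$, since $ac^{n+1}=(ac^nc')c''$ where $c=c'c''$. So it is enough to show that $ac^n\neq 0$ for all $n\ge 0$.

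Write $a=\sum_{h\in\supp a}k_h h$ with $\supp a\subseteq H$ nonempty. Every $h\in H$ stabilizes $c$, and $h|_c\in H$ by $c$-stability. Inductively, then, $h(c^n)=c^n$ and $h|_{c^n}=(h|_c)|_{c^{n-1}}\in H$, using $g|_{pq}=(g|_p)|_q$ and $h|_c(c)=c$. This gives
\[ac^n=\sum_{h\in\supp a}k_h\, h c^n=\sum_{h\in\supp a}k_h\, c^n\, h|_{c^n}.\]
Each summand $c^n h|_{c^n}$ is in normal form $pgq^*$ with $p=c^n$, $q=v$ and $g=h|_{c^n}$. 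By uniqueness of normal forms, distinct group elements $h|_{c^n}$ give distinct basis elements of $KS_{(G_B,E_A)}$.

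The key step is to show that the map $h\mapsto h|_{c^n}$ is injective on $H$. It is the $n$-fold composite of the bijection $|_c\colon H\to H$ from Definition \ref{Def: c-stable subgroups}, and is therefore itself a bijection of $H$. Hence the summands $k_h\, c^n h|_{c^n}$ lie on pairwise distinct basis elements with nonzero coefficients, so $ac^n\neq 0$. Consequently no prefix of $c^\omega$ annihilates $a$, and $a\notin T_K$ by Proposition \ref{prop:tight_characterization}.

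The one point to check carefully is the identity $h|_{c^n}=(h|_c)^{\circ n}$, where the right-hand side means $|_c$ applied $n$ times. This requires $h|_{c^k}$ to fix $c$ at each stage, which holds because $h|_{c^k}\in H\le\Stab c$. Apart from this, the argument is a direct combination of the normal form uniqueness with the tight characterization. Note that the argument also covers $B_c=0$, since then $H$ is trivial and $a$ is a nonzero multiple of $v$, which is not tight.
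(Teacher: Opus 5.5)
Your proposal is correct and follows essentially the same route as the paper: both apply Proposition \ref{prop:tight_characterization} along the infinite path $c^\omega$ and observe that $ac^n = c^n\sum_h k_h\, h|_{c^n}$ has the same support size as $a$, because $|_c$ is a bijection of $H$ and normal forms are unique. You also spell out the reduction from arbitrary prefixes $c^nc'$ to powers $c^n$, which the paper leaves implicit.
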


\begin{proof}
To show that $a \notin \TightIdeal$ it suffices by Proposition \ref{prop:tight_characterization} that $ac^k \neq 0$ for any $k \geq 0$. Since $H$ is $c$-stable, putting $a=\sum_{h \in H} a_h h$, we have $ac^k=c^k\sum_{h \in H} a_h f^k(h)$ where $f \colon H \to H$ is the bijection defined by $f(h)=h|_c$. In particular $f^k$ is also a bijection so $0<|\supp a|=|\supp (ac^k)|$ which completes the proof.
\end{proof}

\subsection{Finding singular elements in subgroups.}
We now turn to investigate how we can check if there are nonzero singular elements supported on given subgroups, which we will apply to maximal stable subgroups associated to simple cycles.

We first show a converse to Proposition \ref{Prop: free orbits prevent singular elements} and prove that the presence of non-zero singular elements in some subgroup $H$ is entirely determined by whether $H$ has free orbits.

For a self-similar groupoid $(G, E)$ and some $g\in G$, an infinite path $p\in \source(g)E^\omega$ is called $g$-\textit{generic} if $g(p)\neq p$ or $g(p)=p$ and $g|_q = \source(q)$ for some prefix $q$ of $p$. We say that $p\in E^\omega$ is \textit{generic} if $p$ is $g$-generic for all $g\in (G)_{\range(p)}$. We also say a finite path $q\in E^*$ is generic if every path in $qE^\omega$ is generic.
In \cite{Nek2009CstarAlg}, Nekrashevych observes a more general form of the following for countable self-similar groups acting on a finite alphabet. We provide the following proof for completeness.
\begin{Lemma}\label{Lem: there exists a generic finite path}
    Suppose $(G_B)_v$ is finite. For all $p\in vE_A^*$, there exists some $q\in \source(p)E_A^*$ such that $pq$ is generic.
\end{Lemma}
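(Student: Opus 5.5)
We prove the statement by extending $p$ one group element at a time. The target property is this: a finite path $w$ starting at $v$ is \emph{$g$-generic} (every infinite path in $wE_A^\omega$ is $g$-generic) exactly if either $g(w)\neq w$, or $g(w)=w$ and $g|_{w'}=\source(w')$ for some prefix $w'$ of $w$. Both conditions persist when $w$ is extended. So it suffices to show, for each fixed $g\in(G_B)_v$ and each $w\in vE_A^*$, that some $q'\in\source(w)E_A^*$ makes $wq'$ $g$-generic.

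Granting this, the lemma follows by a finite iteration. Enumerate the finite group as $(G_B)_v=\{h_1,\dots,h_n\}$. Set $w_0=p$ and choose successive extensions $w_i=w_{i-1}q_i$ with $w_i$ being $h_i$-generic. By the monotonicity above, $w_n=pq$ is $h_i$-generic for every $i$. Since $\range(pq)=v$ and $(G_B)_{\range(x)}=(G_B)_v$ for every $x\in pqE_A^\omega$, every such $x$ is generic, so $pq$ is generic.

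It remains to prove the single-element claim for $g=g_v^k$ and a path $w$. If $g(w)\neq w$ we are done, so assume $g$ fixes $w$. Then $g|_w=g_{u}^{l}$ with $u=\source(w)$, and $l\equiv kB_w/A_w$ modulo $|(G_B)_u|$ by Lemma~\ref{lem: fixed path computation}(2). Sections of elements of finite isotropy groups lie in finite isotropy groups: by \cite[Cor.~3.3]{HUME_WHITTAKER_2025} the subgroupoid $G_{\text{fin}}$ is closed under sections, so $(G_B)_u$ is finite. We therefore reduce to the following claim: for $h\in(G_B)_u$ there is $q'\in uE_A^*$ with $h(q')\neq q'$ or $h|_{q'}=\source(q')$. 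Suppose not, so $h$ fixes all of $uE_A^*$ and all its sections at paths are nontrivial. This contradicts faithfulness. The action of $(G_B, E_A)$ is faithful by construction, so $h$ acting trivially on $uE_A^*$ forces $h=u$, and then $q'=u$ itself (the empty path) gives $h|_{u}=u=\source(u)$. Hence for $h\ne u$ some path is moved, and for $h=u$ the empty path works. In either case $w$ extended by that $q'$ is $g$-generic, since $g(wq')=wh(q')\neq wq'$ or $g|_{wq'}=h|_{q'}=\source(q')$.

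The main obstacle is making sure the reduction to faithfulness is correct, namely that fixing all paths at $u$ truly forces $h=u$ in the quotient $G_B$. This holds by the definition of $G_B$ as the quotient by the kernel of the action. Finiteness of $(G_B)_v$ is used only to make the iteration finite; as a backup, Lemma~\ref{lem: fixed path computation} gives a direct check, since $g|_w$ trivial means $kB_w/A_w\equiv 0$.
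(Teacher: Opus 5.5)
Your proof is correct and is essentially the paper's argument: a prefix that $g$ moves, or at which $g$ has trivial section, stays that way under extension; otherwise faithfulness applied to the nontrivial section $g|_w$ gives an extension moved by $g$, and you iterate over the finitely many elements of $(G_B)_v$. Two minor points: the detour showing $(G_B)_{\source(w)}$ is finite is not needed, since faithfulness alone does the work, and your ``exactly if'' description of when a finite path is $g$-generic holds only as ``if'' in general, which is the only direction you use.
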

\begin{proof}
    Let $p\in vE_A^*$. By definition, every path in $pE_A^\omega$ is already $h$-generic for any $h\in (G_B)_v$ such that $h(p)\neq p$. Suppose that $g\in (G_B)_v$ and $g(p)=p$. If $g|_p= \source(p)$, every path in $pE_A^\omega$ is $g$-generic. If $g|_p\neq \source(p)$, then by the faithfulness of the action, there exists some $pq\in vE_A^*$ such that $g(pq)\neq pq$. But then every path in $pqE_A^\omega$ is $g$-generic. Because $(G_B)_v$ is finite, we can perform finitely many extensions to $p$ to attain a generic path $pq\in vE_A^*$.
\end{proof}

The following allows us to write down a concrete singular element in any subgroup without free orbits.

\begin{Proposition}\label{Prop: pi(g) is singular and non-zero}
   Let $(G_B, E_A)$ be a faithful KEP-groupoid and $K$ be a field. Suppose $(G_B)_v$ is finite and $H$ is a nontrivial subgroup of $(G_B)_v$ generated by $g$ with no free orbits. Put $m=|H|$. Then
    \[\pi(g):=\prod_{1\leq d < m, d|m}(v-g^d) \in KH\]
    is a non-zero singular element of $KS_{(G_B, E_A)}$.
\end{Proposition}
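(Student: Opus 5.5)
The plan is to prove the two assertions separately: first that $\pi(g)\neq 0$, which is a statement inside the group algebra $KH$; and then that $\pi(g)$ is singular, via Proposition \ref{prop:singular_characterization}.

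\emph{Non-vanishing.} Since $H=\langle g\rangle$ is cyclic of order $m$, we have $KH\cong K[x]/(x^m-1)$ with $g\mapsto x$ and $v\mapsto 1$. The coefficients of $\pi(g)$ in the basis $H$ of $KH$ are its coefficients in $KS_{(G_B,E_A)}$ (the elements of $H$ are distinct normal forms), so it suffices to show that $x^m-1$ does not divide $F(x):=\prod_{1\le d<m,\, d\mid m}(1-x^d)$ in $K[x]$.

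Write $m=p^a m'$, where $p$ is the characteristic of $K$ with $p\nmid m'$; if the characteristic is $0$, take $a=0$ and $m'=m$. Then $x^m-1=(x^{m'}-1)^{p^a}$. Let $\zeta$ be a primitive $m'$-th root of unity in an algebraic closure of $K$; it is a root of $x^m-1$ of multiplicity $p^a$. I will count the multiplicity of $\zeta$ as a root of $F$.
\begin{itemize}
\item $\zeta$ is a root of $1-x^d$ exactly when $m'\mid d$, i.e. $d=m'e$ with $e\mid p^a$ and $e<p^a$.
\item In that case $1-x^d=(1-x^{m'})^{e}$, so the multiplicity of $\zeta$ in $1-x^d$ is $e$.
\end{itemize}
The total multiplicity of $\zeta$ in $F$ is therefore $\sum_{j<a}p^j<p^a$. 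Hence $x^m-1\nmid F$, and $\pi(g)\neq 0$.

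\emph{Singularity.} By Proposition \ref{prop:singular_characterization}, it suffices to show that for every finite path $p$ there is $q\in\source(p)E_A^*$ with $\pi(g)pq=0$. If $\range(p)\neq v$, this is automatic. Otherwise, fix $p\in vE_A^*$ and use Lemma \ref{Lem: there exists a generic finite path} to choose $q$ such that $pq$ is generic. Pick any $x\in pqE_A^\omega$.

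Since $H$ has no free orbits, the stabilizer of $x$ in $H$ is nontrivial. It is therefore of the form $\langle g^d\rangle$ for some proper divisor $d$ of $m$. Since $x$ is $g^d$-generic and $g^d(x)=x$, some prefix $r$ of $x$ satisfies $g^d|_r=\source(r)$; of course $g^d(r)=r$ as well. Enlarging $r$ if necessary (sections of units stay units, and $r$ stays fixed), we may assume $pq$ is a prefix of $r$, so $r=pqs$ for some $s$. Then
\[(v-g^d)r=r-g^d(r)\,g^d|_r=r-r=0.\]
Because $H$ is abelian, the factor $(v-g^d)$ can be moved to the right end of the product defining $\pi(g)$, which gives $\pi(g)pqs=0$.

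The main obstacle I anticipate is the non-vanishing step in positive characteristic $p\mid m$. There the naive argument that $\Phi_m$ does not divide any factor $1-x^d$ fails, and one needs the multiplicity count above. The singularity step should be routine once genericity is used to obtain a prefix on which the relevant stabilizing power of $g$ has trivial section.
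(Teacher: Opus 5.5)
Your proposal is correct and follows the paper's proof essentially step for step. Singularity comes from Proposition \ref{prop:singular_characterization} together with a generic extension $pq$ supplied by Lemma \ref{Lem: there exists a generic finite path}. Non-vanishing comes from comparing the multiplicity of a primitive $m'$-th root of unity $\zeta$ in $x^m-1$ and in $\prod(1-x^d)$.

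Your version is slightly more careful than the paper's in two places. First, you pass to a prefix $r$ of $x$ that extends $pq$ and satisfies $g^d|_r=\source(r)$, which is exactly what genericity directly provides. The paper instead asserts $g^d|_{pq}=\source(pq)$. Second, you correctly count the multiplicity of $\zeta$ in $1-x^{m'p^j}=(1-x^{m'})^{p^j}$ as $p^j$, for a total of $\sum_{j<a}p^j<p^a$. The paper's stated multiplicity (its exponent $k$) undercounts this, although both bounds give the conclusion.
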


\begin{proof}
To see that $\pi(g)$ is singular, we apply Proposition \ref{prop:singular_characterization}. If $p \notin vE_A^*$, then $\pi(g)p=0$. Assume $p\in vE_A^*$. By Lemma \ref{Lem: there exists a generic finite path}, there exists some $q\in \source(p)E_A^*$ such that $pq$ is generic. Let $w\in pqE_A^\omega$, and consider the orbit of $w$ under the action of $H$. The orbit of $w$ is not free by assumption, so it has size $d$ for some $1\leq d<m$ where $d$ divides $m$. Then $g^d(w)=w$ implying that $g^d(pq)=pq$. Because $pq$ is generic, $g^d|_{pq}=\source(pq)$, and so $(v-g^d)pq = 0$. Observe that $(v-g^d)$ is a factor of $\pi(g)$; we conclude that $\pi(g)pq = 0$ because $H$ is abelian. This confirms that $\pi(g)$ is singular.

We move on to prove that $\pi(g) \neq 0$.
    Observe that $KH\cong \faktor{K[x]}{(x^m-1)}$, and $\pi(g)$ is the image of $\pi(x):= \prod_{1\leq d < m, d|m}(1-x^d)$ under the natural projection from $K[x]$ to $KH$. Then $\pi(g)\neq 0$ if and only if $\pi(x)$ is not divisible by $(x^m-1)$. Without loss of generality, assume that $K$ is algebraically closed. We have 
    $$x^m-1 = \prod_{\zeta\in K, \zeta^m=1}(x-\zeta).$$
    We proceed in two cases. First, suppose the characteristic of $K$ does not divide $m$. Then there exists some $\zeta\in K$ with multiplicative order $\sigma(\zeta)=m$. It follows that $\zeta$ is a root of $x^m-1$, but not a root of $\pi(x)$, so $x^m-1$ does not divide $\pi(x)$. Now suppose that the characteristic of $K$ is $p$ with $m=p^k\cdot n$ with $p$ not dividing $n$ and $k\geq 1$. Then $x^m-1 = x^{{p^k}n}-1=(x^n-1)^{p^k}$. There exists some $\zeta\in K$ with $\sigma(\zeta)=n$. We have that $\zeta$ is a root of $x^m-1$ with multiplicity $p^k$. Additionally, $\zeta$ is a root of $\pi(x)$ with multiplicity 
    \[\left|\{1\leq d< m \colon n|d, d|p^kn\}\right|=\left|\{n,np,\ldots, np^{k-1}\}\right|=k.\]
    Since $p\geq 2$ and $k\geq 1$, we have $p^k>k$ so again $x^m-1$ does not divide $p(x)$.

    In both cases, we find that $\pi(g)$ is non-zero in $KH$.
\end{proof}

An immediate consequence of Propositions \ref{Prop: free orbits prevent singular elements} and \ref{Prop: pi(g) is singular and non-zero} is the following.

\begin{Corollary}\label{Cor: Subgroups without free orbits support singular elements}
    Let $(G_B, E_A)$ be a faithful KEP-groupoid and $v \in E_A^0$. Suppose $(G_B)_v$ is finite and $H \leq (G_B)_v$. Then $H$ supports a non-zero singular element if and only if the action of $H$ on $vE_A^\omega$ has no free orbits.
\end{Corollary}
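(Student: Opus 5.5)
The corollary combines the two preceding results, one for each direction, with a small reduction in the degenerate case.

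For the `only if' direction we argue by contrapositive using Proposition \ref{Prop: free orbits prevent singular elements}. Suppose the action of $H$ on $vE_A^\omega$ has a free orbit $O$. Since $H$ is a subgroup of the self-similar groupoid $(G_B,E_A)$ acting on $vE_A^*$, that proposition applies directly. It shows that $H$ supports no non-zero singular element of $KS_{(G_B,E_A)}$, for any field $K$. Finiteness of $(G_B)_v$ is not needed in this direction.

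For the `if' direction, assume $H$ has no free orbit on $vE_A^\omega$.
\begin{enumerate}
\item First we need a generator. Every subgroup of the finite cyclic group $(G_B)_v=\langle g_v\rangle$ is cyclic, so $H=\langle g\rangle$ for some $g$.
\item If $H$ is nontrivial, all hypotheses of Proposition \ref{Prop: pi(g) is singular and non-zero} hold: $(G_B)_v$ is finite, $H$ is nontrivial and generated by $g$, and $H$ has no free orbits. That proposition then says $\pi(g)\in KH$ is a non-zero singular element, and it is supported on $H$.
\item The trivial case needs separate care. If $H=\{v\}$, every orbit is a singleton on which $H$ acts freely. Since $vE_A^\omega\neq\emptyset$ (the graph has no sources), such an orbit exists. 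So the hypothesis ``no free orbits'' fails, and the equivalence holds trivially in this case.
\end{enumerate}

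The only real checks are that a generator $g$ exists and that the trivial subgroup is handled consistently. Both are immediate, so I expect no substantive obstacle: the work was already done in Propositions \ref{Prop: free orbits prevent singular elements} and \ref{Prop: pi(g) is singular and non-zero}.
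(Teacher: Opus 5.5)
Your proposal is correct and follows the paper's proof, which derives the corollary directly from Proposition \ref{Prop: free orbits prevent singular elements} (for the `only if' direction) and Proposition \ref{Prop: pi(g) is singular and non-zero} (for the `if' direction). You also make explicit two details the paper leaves implicit: that $H$ is cyclic, so a generator $g$ exists, and that the trivial subgroup is harmless because $vE_A^\omega\neq\emptyset$.
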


If $H$ is a stable subgroup, by Proposition \ref{Prop: c-stable never tight} a non-zero singular element is automatically not tight. As we have already observed, it suffices to search for singular elements in stable subgroups -- we will now see that we can further assume they belong to simple cycles.

\begin{Proposition}
	\label{Prop: c-stable simple c suffices}
Suppose $KS_{(G_B, E_A)}$ contains a singular element which is not tight. Then there is a vertex $v \in E_A^0$ and a simple cycle $c \in vE_A^*v$ with $B_c \neq 0$ such that the maximal $c$-stable subgroup contains a singular element which is not tight.
\end{Proposition}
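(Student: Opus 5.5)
The plan is to start from Proposition \ref{Prop: singular supported on recurring subgroups}. It gives a vertex $v$ with $(G_B)_v$ finite, a cycle $p \in vE_A^*v$, and a singular, non-tight element $a \in KH_p$. We then move $a$ along rotations of $p$ until the cycle begins with a simple cycle $c$, and show that the recurring subgroup we end up in is $c$-stable, hence contained in $G_c$.

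\emph{Step 1 (transport along rotations).} Write $p = st$, so that $ts$ is its rotation. For $a=\sum_{g} k_g g \in KH_{st}$, every $g\in\supp a$ fixes $s$, so $as = s\,a|_s$ with $a|_s := \sum_g k_g\, g|_s \in KH_{ts}$. By Lemma \ref{Lem:recurring subgroups congjugate}(1), $|_s$ is a bijection $H_{st}\to H_{ts}$, so $a|_s \neq 0$.

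To see that $a|_s$ is singular, take $r \in \source(s)E_A^*$. By Proposition \ref{prop:singular_characterization} applied to $a$ and the path $sr$, there is $q$ with $asrq=0$, that is, $s\,a|_s\, rq = 0$. Expanding in normal forms, left multiplication by $s$ is injective on these terms, so $a|_s\, rq=0$. Paths not starting at $\source(s)$ are killed trivially, so $a|_s$ is singular.

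Since $a|_s$ is a non-zero singular element supported on the isotropy group at $\source(s)$, Corollary \ref{Cor: infinite groups nonsingular} shows that $(G_B)_{\source(s)}$ is finite. Recurring subgroups are stable, so Proposition \ref{Prop: c-stable never tight} shows that $a|_s$ is not tight.

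\emph{Step 2 (choose the rotation).} Every cycle contains a simple cycle as a consecutive block of edges. So after rotating $p$, using Step 1, we may assume $p = cd$, where $c \in wE_A^*w$ is a simple cycle and $d$ is either the vertex $w$ or a cycle at $w$. If $d=w$ we are done with $H_c$. Otherwise Lemma \ref{Lem:recurring subgroups congjugate}(2) gives $H_{cd}=H_{dc}$, using finiteness of $(G_B)_w$ from Step 1.

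\emph{Step 3 ($H_{cd}$ is $c$-stable).} Every $g \in H_{cd}$ fixes $cd$, hence fixes $c$. Moreover $g|_c$ fixes $d$ and $(g|_c)|_d = g|_{cd} = g$, so $g|_c \in H_{dc} = H_{cd}$. Thus $|_c$ maps $H_{cd}$ into itself. The composite $|_d\circ|_c$ is the identity on $H_{cd}$, so $|_c$ is injective, and since $H_{cd}$ is finite it is bijective. Hence $H_{cd}$ is $c$-stable and $H_{cd}\le G_c$ by Lemma \ref{Lem:cstable}.

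Consequently our element lies in $KG_c$, and it remains singular and not tight.

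\emph{Step 4 ($B_c\neq0$).} If $B_c=0$, then $G_c$ is trivial, so the element would be $kw$ with $k\neq 0$. But $kw$ is not singular, since $kw\,q = kq \ne 0$ for every $q\in wE_A^*$. Hence $B_c\neq 0$.

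I expect the main obstacle to be Step 1, specifically checking carefully that singularity survives the passage $a\mapsto a|_s$ (the left cancellation of $s$ in normal form) and that finiteness of the isotropy group holds at the new base vertex. The remaining steps are bookkeeping with Lemma \ref{Lem:recurring subgroups congjugate}.
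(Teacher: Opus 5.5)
Your proof is correct. Its skeleton matches the paper's: start from Proposition \ref{Prop: singular supported on recurring subgroups}, factor $p=p_1cp_2$ at the first repeated vertex, rotate to $cp_2p_1$ via Lemma \ref{Lem:recurring subgroups congjugate}(1), and use part (2) to show that $H_{cp_2p_1}$ is $c$-stable.

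The difference is in what gets transported.
\begin{itemize}
\item The paper transports only the property ``no free orbits'': a free $H_{cp_2p_1}$-orbit of $x$ would give a free $H_p$-orbit of $p_1x$. It then invokes Corollary \ref{Cor: Subgroups without free orbits support singular elements} to manufacture a fresh singular element in the $c$-stable subgroup. That corollary rests on the explicit element of Proposition \ref{Prop: pi(g) is singular and non-zero} and the genericity Lemma \ref{Lem: there exists a generic finite path}.
\item You transport the singular element itself, $a\mapsto a|_{p_1}$. This needs only $ap_1=p_1\,a|_{p_1}$ and left cancellation of $p_1$ in normal forms. It is more elementary and avoids the free-orbit machinery entirely.
\end{itemize}

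Your route also makes explicit two points the paper leaves implicit. First, $(G_B)_{\source(c)}$ is finite, which is needed for Lemma \ref{Lem:recurring subgroups congjugate}(2) and for the definition of $c$-stability. You obtain this from Corollary \ref{Cor: infinite groups nonsingular}; in the paper it has to be recovered from Proposition \ref{Prop: Infinite group characterization}(1). Second, you prove $B_c\neq 0$, which the paper only asserts.

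The paper's route, in turn, lines up directly with the free-orbit criterion on which Theorem \ref{Thm: testing singular elements faithful} and the decision algorithm are built.
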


\begin{proof}
Suppose $KS_{(G_B, E_A)}$ contains a singular element which is not tight. By Corollary \ref{Cor: Subgroups without free orbits support singular elements} it suffices to show that there is a $c$-stable subgroup $H$ with no free orbits, where $c$ is a simple cycle. Necessarily we must have $B_c \neq 0$.
 By Proposition \ref{Prop: singular supported on recurring subgroups}, there is a cycle $p \in E_A^*$ such that the recurring subgroup $H_p$ supports such an element. In particular the action of $H_p$ has no free orbits by Proposition \ref{Prop: free orbits prevent singular elements}. If $p$ happens to be a simple cycle, then we may put $v=\range(p)$, $c=p$ and $H=H_p$ -- it follows by definition that $H_p$ is $c$-stable.

If $p$ is not simple, then let us factor $p$ at the first repeated vertex as $p=p_1 c p_2$ where $p_1$ is a simple path and $c$ is a simple cycle. We may apply Lemma \ref{Lem:recurring subgroups congjugate} (1) to $p:=p_1, q:=cp_2$ to obtain an isomorphism from $H_p$ to $H_{c p_2p_1}$. Furthermore if $H_{c p_2p_1}$ had a free orbit, i.e. if some path $q \in r(c)E_A^\omega$
had $H_{c p_2p_1}$-orbit of size $|H_p|$, then the $H_p$-orbit of $p_1q \in r(p)E_A^\omega$ would have the same size and would thus be free. So $H_{c p_2p_1}$ has no free orbit. We claim that it is also $c$-stable. It is clear that $H_{c p_2p_1}$ stabilizes $c$. Applying Lemma \ref{Lem:recurring subgroups congjugate} (1) to $p:=c$ and $q:=p_1p_2$, we have that  $|_c \colon H_{c p_2p_1} \to H_{p_2p_1 c}$ is a bijection, but by  Lemma \ref{Lem:recurring subgroups congjugate} (2) $H_{c p_2p_1} = H_{p_2p_1 c}$ which shows that $H=H_{c p_2p_1}$ is a $c$-stable subgroup with no free orbits.

In either case, the maximal $c$-stable subgroup $G_c$ supports a singular element which is not tight.
\end{proof}

We state the main theorem of the section.

\begin{Theorem}
\label{Thm: testing singular elements faithful}
Let $(G_B, E_A)$ be a KEP-groupoid.
Then $K\G_{(G_B, E_A)}$ has a nontrivial singular ideal
if and only if there is a vertex $v \in E_A^0$ with $(G_B)_v$ finite and a simple cycle $c \in vE_A^*v$ such that the unique maximal $c$-stable subgroup $G_c$ has no free orbits on $vE_A^\omega$.

In this case, an element of $KG_c$ which is singular and not tight is
$$a=\prod_{1\leq d<m, d\mid m} (v-g_v^{dn})$$
where $|H|=m$, $(G_B)_v=\langle g_v \rangle$, $H=\langle g_v^n \rangle$.
\end{Theorem}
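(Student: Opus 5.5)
The plan is to assemble the equivalence from the results of this section, using Proposition \ref{prop:how_we_check_J=0} to translate the statement about the singular ideal $J_K$ of $K\G_{(G_B,E_A)}$ into one about elements of $KS_{(G_B,E_A)}$ that are singular but not tight.

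For the forward direction, suppose $J_K\neq 0$. Then $KS_{(G_B,E_A)}$ contains a singular element that is not tight. Proposition \ref{Prop: c-stable simple c suffices} supplies a vertex $v$ and a simple cycle $c\in vE_A^*v$ with $B_c\neq 0$ such that $G_c$ supports a non-zero singular element. Its proof goes through Proposition \ref{Prop: singular supported on recurring subgroups}, so $(G_B)_v$ is finite; Corollary \ref{Cor: infinite groups nonsingular} also gives this directly. Proposition \ref{Prop: free orbits prevent singular elements} then shows that $G_c$ has no free orbit on $vE_A^\omega$.

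For the converse, suppose $(G_B)_v$ is finite, $c$ is a simple cycle, and $G_c$ has no free orbits on $vE_A^\omega$. First note that $G_c$ must be nontrivial. The trivial group acts freely on any nonempty orbit, and $vE_A^\omega\neq\emptyset$ because $E_A$ has no sources. Write $G_c=H=\langle g_v^n\rangle$ with $|H|=m\ge 2$.
\begin{itemize}
\item Apply Proposition \ref{Prop: pi(g) is singular and non-zero} with $g=g_v^n$. It shows that $a=\pi(g)=\prod_{1\le d<m,\, d\mid m}(v-g_v^{dn})$ is a non-zero singular element of $KH$.
\item Since $H$ is $c$-stable by Lemma \ref{Lem:cstable}, Proposition \ref{Prop: c-stable never tight} shows that $a\notin T_K$.
\end{itemize}
Hence $a\in I_K\setminus T_K$, so $J_K\neq 0$ by Proposition \ref{prop:how_we_check_J=0}. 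This same argument establishes the explicit witness claimed in the second part of the theorem.

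The main point needing care is consistency between the two directions. The forward direction produces a $c$-stable subgroup with no free orbits, while the statement is phrased using the maximal one, $G_c$. The proof of Proposition \ref{Prop: c-stable simple c suffices} already passes from a $c$-stable subgroup $H'\le G_c$ with no free orbits to $G_c$ itself. I would make that step explicit as follows. If $G_c$ had a free orbit through some $w$, then the $H'$-orbit of $w$ would have size $|H'|$, because $H'\le G_c$ acts freely on the $G_c$-orbit of $w$. That orbit would then be free for $H'$, a contradiction. The remaining steps are direct citations.
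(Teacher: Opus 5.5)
Your proposal is correct and follows essentially the same route as the paper. The forward direction goes through Proposition \ref{prop:how_we_check_J=0}, Proposition \ref{Prop: c-stable simple c suffices} and the free-orbit obstruction of Proposition \ref{Prop: free orbits prevent singular elements}, and the converse goes through Proposition \ref{Prop: pi(g) is singular and non-zero} together with Proposition \ref{Prop: c-stable never tight}. You also make explicit two details the paper leaves implicit: that $G_c$ must be nontrivial, and that a free orbit of $G_c$ would be a free orbit of any smaller $c$-stable subgroup.
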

\begin{proof}
The first statement follows from Propositions \ref{prop:how_we_check_J=0}, \ref{Prop: c-stable simple c suffices} and Corollary \ref{Cor: Subgroups without free orbits support singular elements}. 
	
Furthermore, if $G_c$ has no free orbits, then by Proposition \ref{Prop: pi(g) is singular and non-zero},
$a \in KG_c \cap \SingularIdeal \setminus \{0\}.$
It follows from Proposition \ref{Prop: c-stable never tight} that $a \notin \TightIdeal$.
\end{proof}

\begin{Corollary}
\label{Cor: faithful simplicity main result}
	Let $(A,B)$ be a Katsura pair and $( G_B, E_A)$ the associated faithful KEP-groupoid and let $K$ be any field. The Steinberg algebra $K\G_{( G_B, E_A)}$ and the Katsura algebra $C^{\ast}_r(\G_{( G_B, E_A)})$ are simple if and only if the following hold:
	
	\begin{enumerate}
		\item[(C1)] $E_A$ has a unique nontrivial strongly connected component which contains at least two cycles;
		
		\item[(FC2)] For any vertex $v\in E_A^0$ with $(G_B)_v$ finite and any simple cycle $c \in vE_A^*v$, the unique maximal $c$-stable subgroup $G_c$ has a free orbit on $vE_A^\omega$.
		
	\end{enumerate}
	When these conditions hold, $C^{\ast}_r(\G_{( G_B, E_A)})$ is purely infinite simple. Moreover, the simplicity of $K\G_{( G_B, E_A)}$ does not depend on $K$.
\end{Corollary}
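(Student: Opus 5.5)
The proof follows the template of Corollary \ref{Cor: nonfaithful simplicity main result}. By Corollary \ref{Cor: cstar simplicity iff steinberg simplicity}, simplicity of $C^{\ast}_r(\G_{(G_B,E_A)})$ coincides with that of $\mathbb{C}\G_{(G_B,E_A)}$. By Proposition \ref{Prop: simple implies purely infinite}, simplicity of the $C^{\ast}$-algebra already gives pure infiniteness. So it suffices to show that, for an arbitrary field $K$, the algebra $K\G_{(G_B,E_A)}$ is simple if and only if (C1) and (FC2) hold. These conditions do not mention $K$, so independence of the field follows at once.

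By Theorem \ref{Thm: ample groupoid simplicity characterisations}(1), which applies because $\G_{(G_B,E_A)}$ is amenable (the argument of Lemma \ref{Lem: isotropies in gauge invariant subgroupoid} applies verbatim), $K\G_{(G_B,E_A)}$ is simple exactly when three things hold: the groupoid is minimal, it is topologically free, and $J_K=0$. As in the non-faithful case, $E_A$ is finite without sources, so (Cof) and (Cyc) together are equivalent to (C1). By Theorem \ref{Thm: testing singular elements faithful}, $J_K=0$ is exactly (FC2). Since $J_K$ does not depend on $K$, this also shows the vanishing is field independent.

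The remaining point is condition (Evr), and this is where faithfulness is used. I claim (Evr) holds automatically for $(G_B,E_A)$. Suppose $g$ fixes every path in $\source(g)E_A^*$. Since the action of $G_B$ on $E_A^*$ is faithful, $g$ must be the unit $\source(g)$. Then the empty path $p=\source(g)$ satisfies $g|_p=g=\source(p)$, so (Evr) holds. Therefore minimality and topological freeness reduce to (C1) alone, and combining the three facts gives the corollary.

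The only step needing care is this (Evr) verification, since in the non-faithful case it was entangled with (C2). Here faithfulness makes it trivial, so I expect no real obstacle. It remains only to confirm that the cited minimality and topological-freeness characterizations \cite{MundKwaTwisOpAlgSSG} apply to the faithful quotient groupoid $G_B$, which is again a self-similar bundle of groups.
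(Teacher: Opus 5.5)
Your proposal is correct and follows the paper's proof. You reduce to $K\G_{(G_B,E_A)}$ via Corollary~\ref{Cor: cstar simplicity iff steinberg simplicity} and Proposition~\ref{Prop: simple implies purely infinite}, apply Theorem~\ref{Thm: ample groupoid simplicity characterisations}, identify (Cof) and (Cyc) with (C1), note that (Evr) is automatic for faithful actions, and read off that $J_K=0$ if and only if (FC2) holds from Theorem~\ref{Thm: testing singular elements faithful}. Your explicit check of (Evr) — a trivially acting $g$ must be a unit, whose section at the empty path is a unit — simply spells out what the paper asserts in a single clause.
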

\begin{proof}
	Recall that the simplicity of $K\G_{( G_B, E_A)}$ and $C^{\ast}_r(\G_{( G_B, E_A)})$ coincide by Corollary \ref{Cor: cstar simplicity iff steinberg simplicity}, and if $C^{\ast}_r(\G_{( G_B, E_A)})$ is simple, it is purely infinite simple by Proposition \ref{Prop: simple implies purely infinite}. We argue that these conditions are equivalent to the simplicity of $K\G_{( G_B, E_A)}$.
	
	By Theorem \ref{Thm: ample groupoid simplicity characterisations}, $K\G_{( G_B, E_A)}$ is simple if and only if $\G_{( G_B, E_A)}$ is minimal and topologically free and the singular ideal vanishes. Minimality and topological freeness is equivalent to the conjunction of conditions (Cof), (Cyc), and (Evr). Because $E_A$ is finite and without sources, $( G_B, E_A)$ satisfies (Cof) and (Cyc) if and only if (C1) is satisfied, and (Evr) is always satisfied for faithful actions.

	It is clear that (FC2) is equivalent to the singular ideal vanishing by Theorem \ref{Thm: testing singular elements faithful}. For the last statement, observe that (C1) and (FC2) do not depend on $K$.
\end{proof}

%
%

We also observe:

\begin{Corollary}
	For a Katsura pair $(A, B)$, $C^{\ast}_r(\G_{( G_B, E_A)})$ is simple if and only if the graph algebra $C^{\ast}_r(E_A)$ is simple and the singular ideal $J$ vanishes.
\end{Corollary}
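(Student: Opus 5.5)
By Corollary \ref{Cor: faithful simplicity main result}, $C^{\ast}_r(\G_{(G_B, E_A)})$ is simple if and only if (C1) holds and (FC2) holds. The proof of that corollary already identified (FC2) with the vanishing of the algebraic singular ideal, via Theorem \ref{Thm: testing singular elements faithful}. Theorem \ref{Thm: J and JC}, applied to the faithful KEP-groupoid (a self-similar bundle of abelian, indeed cyclic, groups), upgrades this to equivalence with $J=0$. So it suffices to show that (C1) is equivalent to the simplicity of the graph algebra $C^{\ast}_r(E_A)$.

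For this step I would use the standard simplicity criterion for graph $C^{\ast}$-algebras of a finite graph with no sources: $C^{\ast}(E)$ is simple if and only if $E$ is cofinal and every cycle has an entrance. Equivalently, $C^{\ast}(E)$ is the groupoid algebra of the path groupoid, i.e.\ the self-similar groupoid with trivial group bundle $G=E^0$. The graph algebra can then be fed into the same machinery. For the trivial group bundle, (Evr) holds vacuously since only units fix everything, and $G$ acts faithfully. Moreover the path groupoid is Hausdorff, so its singular ideal vanishes; alternatively, each trivial isotropy group supports only multiples of $v$, which are not singular. Theorem \ref{Thm: ample groupoid simplicity characterisations} then says $C^{\ast}_r(E_A)$ is simple if and only if (Cof) and (Cyc) hold for $E_A$. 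The proof of Corollary \ref{Cor: faithful simplicity main result} already records that, for finite $E_A$ without sources, (Cof) together with (Cyc) is equivalent to (C1).

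Combining the two steps, $C^{\ast}_r(\G_{(G_B, E_A)})$ is simple if and only if (C1) and $J=0$, that is, if and only if $C^{\ast}_r(E_A)$ is simple and $J$ vanishes. The only point needing care is that (Cof) and (Cyc) depend only on the graph $E_A$ and not on the action; in particular, (Evr) is automatic both for faithful $(G_B,E_A)$ and for the trivial bundle. Everything else is bookkeeping with the cited results.
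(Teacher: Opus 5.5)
Your argument is correct and is essentially the paper's: identify $C^{\ast}_r(E_A)$ with the algebra of the trivial bundle over $E_A$, and note that (Evr) is automatic for both faithful actions, so that minimality and topological freeness reduce to the graph-only conditions (Cof) and (Cyc). The one difference is that you reach $J=0$ through (FC2), Theorem \ref{Thm: testing singular elements faithful} and Theorem \ref{Thm: J and JC}; applying Theorem \ref{Thm: ample groupoid simplicity characterisations}(2) directly to $\G_{(G_B,E_A)}$ already gives the condition $J=0$, so that detour is unnecessary.
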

\begin{proof}
	Observe that the graph algebra $C^{\ast}_r(E_A)$ is exactly $C^{\ast}_r(\G_{( G_B^0, E_A)})$. Both $( G_B, E_A)$ and $( G_B^0, E_A)$ act faithfully, so minimality and topological freeness of $\G_{( G_B^0, E_A)}$ and $\G_{( G_B, E_A)}$ are characterized by (Cof) and (Cyc) which depend only on $E_A$.
\end{proof}

\begin{Remark}
Recall that by Proposition \ref{Prop: simple implies purely infinite},  if	$C^{\ast}_r(\G_{( G_B, E_A)})$ is simple, it is purely infinite simple, and thus in particular a UCT Kirchberg algebra.
\end{Remark}

By Theorem \ref{Thm: testing singular elements faithful}, we can effectively decide if $J_K=0$ if we can decide for any simple cycle $c$ whether $G_c$ has a free orbit. The following allows us to do that.

\begin{Proposition}\label{Prop: Subgroups with free orbits characterization}
	Let $(G_B, E_A)$ be a KEP-groupoid and $v \in E_A^0$ such that $(G_B)_v$ is finite. Suppose that the subgroup $\langle g \rangle =H \leq (G_B)_v$ has order $m >1$. The following are equivalent.
	\begin{enumerate}
		\item The action of $H$ on $vE_A^\omega$ has a free orbit;
		\item There exists a simple path $w\in vE_A^*$ such that $i\frac{B_{w}}{A_{w}}\notin \ZZ$ for all $1\leq i< m$;
		\item The action of $H$ on the simple paths of $vE_A^*$ has a free orbit.      
	\end{enumerate}
\end{Proposition}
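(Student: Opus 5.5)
The plan is to prove the cycle (3)$\Rightarrow$(1)$\Rightarrow$(2)$\Rightarrow$(3). Throughout, write $g=g_v^n$, so that $H=\langle g_v^n\rangle$ with $m=|H|$. Lemma \ref{Lem:fixed path} says that $g^i$ fixes a path $q\in vE_A^*$ if and only if $in\frac{B_{q'}}{A_{q'}}\in\ZZ$ for every prefix $q'$ of $q$. I read condition (2) in these terms, with the exponent of $g_v$ absorbed into the ratio. Part of the proof is to make this normalisation precise and check that it agrees with the statement.

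\textbf{(3)$\Rightarrow$(1).} This step is immediate. If a simple path $w$ has a free $H$-orbit, then every $x\in wE_A^\omega$ satisfies $\Stab_H(x)\subseteq\Stab_H(w)=\{v\}$. So the orbit of $x$ is free.

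\textbf{(1)$\Rightarrow$(2) and (3).} Take $x\in vE_A^\omega$ with free $H$-orbit. The stabilisers of the prefixes of $x$ form a decreasing chain of subgroups of the finite group $H$, and their intersection is $\Stab_H(x)$, which is trivial. So some finite prefix $q$ already has trivial stabiliser in $H$. We may assume that no prefix of $q$ is a zero path: past a zero edge, $g^i$ fixes an extension if and only if it fixes the part before it, as observed in the proof of Proposition \ref{Prop: Infinite group characterization}. So $q$ is a non-zero path.

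Now decompose $q=p_1c_1p_2\cdots c_kp_{k+1}$ with $w:=p_1\cdots p_{k+1}$ simple, as in Proposition \ref{Prop: Infinite group characterization}. Since $(G_B)_v$ is finite, each $c_j$ is reachable from $v$ by a non-zero path. By Lemma \ref{Lem: factoring cycles into simple cycles}, each $\frac{B_{c_j}}{A_{c_j}}$ is then an integer. Hence the conditions ``$in\frac{B_{q'}}{A_{q'}}\in\ZZ$'' over the prefixes $q'$ of $q$ are implied by the same conditions over the prefixes of $w$. This gives $\Stab_H(w)\subseteq \Stab_H(q)$, which is trivial, and so (3) holds.

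For (2) I want a single path $w$ with $in\frac{B_w}{A_w}\notin\ZZ$ for all $1\le i<m$. Compare the last prefix with the earlier ones. For a prefix $q'e$ of a non-zero path, $\frac{B_{q'e}}{A_{q'e}}=\frac{B_{q'}}{A_{q'}}\cdot\frac{B_e}{A_e}$. I intend to show that the reduced denominator of $n\frac{B_{w}}{A_w}$, reduced modulo the order of the target group $(G_B)_{\source(w)}$, already carries the whole $m$-part of the least common multiple over all prefixes. The tool is Lemma \ref{lem: fixed path computation}(2). If $g^i$ fixes every proper prefix $w'$ of $w$, then $g^i|_{w'}=g_{\source(w')}^{\,in B_{w'}/A_{w'}}$. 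Fixing the next edge, and then the rest of $w$, then reduces to divisibility of the final ratio.

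\textbf{(2)$\Rightarrow$(3).} This direction should follow from the same computation. If $in\frac{B_w}{A_w}\notin\ZZ$ for $1\le i<m$, then $g^i$ fails to fix $w$. Either a proper prefix is already moved, or all proper prefixes are fixed and the last condition fails. Either way $\Stab_H(w)$ is trivial.

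\textbf{Main obstacle.} The main obstacle is the passage, in (1)$\Rightarrow$(2), from the least common multiple over all prefixes, which is what Lemma \ref{Lem:fixed path} naturally produces, to the single ratio $\frac{B_w}{A_w}$. If this fails as stated, my fallback is to choose $w$ as a \emph{minimal} prefix with trivial stabiliser. Then every $g^i$ with $1\le i<m$ that fixes all proper prefixes must fail exactly at the last step. I would argue prime by prime on $m$, using $i=m/p$, to extract the single-path condition.
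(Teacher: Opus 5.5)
\textbf{The proposition is false as stated, so your proof cannot be completed.}

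\textbf{Where your argument breaks.} The failing step is in your (1)$\Rightarrow$(3). You delete the cycles $c_j$ from $q$ to obtain the simple path $w$. You then assert that the conditions $in\frac{B_{q'}}{A_{q'}}\in\ZZ$ for prefixes $q'$ of $q$ follow from those for prefixes of $w$, hence $\Stab_H(w)\subseteq\Stab_H(q)$. Integrality of $\frac{B_{c_j}}{A_{c_j}}$ only controls prefixes that contain whole cycles. A prefix of $q$ that stops partway through some $c_j$ carries the ratio of a proper prefix of $c_j$, which is unconstrained. Deleting cycles is harmless for the single ratio $\frac{B_q}{A_q}$, which is how Proposition \ref{Prop: Infinite group characterization} uses it, but not for the prefix conditions that define stabilisers.

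\textbf{A counterexample.} Write ratio for $\frac{B_p}{A_p}$. Take vertices $v,a,b$ with $A_{v,a}=2$, $B_{v,a}=1$, $A_{a,v}=1$, $B_{a,v}=2$, $A_{v,b}=3$, $B_{v,b}=1$, $A_{b,v}=1$, $B_{b,v}=3$, and all other entries $0$. Every cycle has ratio $1$, and every path in $vE_A^*$ has ratio in $\{1,\frac12,\frac13\}$. So $(G_B)_v=\langle g_v\rangle$ has order $6$ by Proposition \ref{Prop: Infinite group characterization}; put $H=(G_B)_v$, $m=6$.

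Consider $x=e_{v,a,0}e_{a,v,0}(e_{v,b,0}e_{b,v,0})^\omega$. By Lemma \ref{Lem:fixed path}, its prefixes $e_{v,a,0}$ and $e_{v,a,0}e_{a,v,0}e_{v,b,0}$ force $2\mid k$ and $3\mid k$ whenever $g_v^k$ fixes $x$, so (1) holds. However, the simple paths in $vE_A^*$ are $v$, $e_{v,a,r}$ and $e_{v,b,r}$ (plus the $2$-cycles, if one counts them). Their stabilisers are $H$, $\langle g_v^2\rangle$ and $\langle g_v^3\rangle$, so (3) fails. Also (2) fails, since $2\cdot\frac12,\ 3\cdot\frac13\in\ZZ$.

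In your notation, $q=e_{v,a,0}e_{a,v,0}e_{v,b,0}$ and $w=e_{v,b,0}$; the lost condition comes from the prefix $e_{v,a,0}$. Adding a loop at $v$ with $A_{v,v}=B_{v,v}=1$ changes none of this and makes $H$ the maximal $c$-stable subgroup of that loop. So the failure occurs exactly where the proposition is used. Corollary \ref{Cor: H supports singular characterization} then predicts a singular element, since all values $\frac{A_q}{\gcd(A_q,|B_q|)}$ are at most $3<6$. But $H$ has a free orbit, contradicting Proposition \ref{Prop: free orbits prevent singular elements}.

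\textbf{The same gap in (1)$\Rightarrow$(2), and in the paper.} The obstacle you flagged in (1)$\Rightarrow$(2) is genuine. It is precisely the step the paper's proof asserts without justification. The paper takes the shortest prefix $q$ of a free-orbit path with $g^i(q)\neq q$ for all $1\le i<m$, and claims Lemma \ref{Lem:fixed path} gives $i\frac{B_q}{A_q}\notin\ZZ$. The lemma only says that each $i$ is detected by \emph{some} prefix.

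Your prime-by-prime fallback breaks for the same reason, even when a simple path with free orbit exists. Take vertices $v,u,x$ with $A_{v,u}=2$, $B_{v,u}=1$, $A_{u,x}=3$, $B_{u,x}=2$, $A_{x,x}=B_{x,x}=1$, $A_{x,v}=1$, $B_{x,v}=0$, and all other entries $0$. Then $(G_B)_v\cong\ZZ/6$, and $w=e_{v,u,0}e_{u,x,0}$ is simple with trivial stabiliser and minimal with this property. Yet $3\cdot\frac{B_w}{A_w}=1\in\ZZ$, because $g_v^3$ moves $w$ only through its first edge. No simple path in $vE_A^*$ satisfies (2).

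\textbf{What survives.} The implications (2)$\Rightarrow$(3)$\Rightarrow$(1) hold, and your arguments for them are correct. Your normalisation $in\frac{B_w}{A_w}$ for $H=\langle g_v^n\rangle$ is genuinely needed for (2)$\Rightarrow$(3). For example, take $A_{v,a}=6$, $A_{v,b}=4$, $B_{v,a}=B_{v,b}=1$, $A_{a,a}=B_{a,a}=A_{b,b}=B_{b,b}=1$ and all other entries $0$. Then the subgroup $\langle g_v^2\rangle$ of $(G_B)_v\cong\ZZ/12$ satisfies the literal (2) via $e_{v,a,0}$, but has no free orbit.
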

\begin{proof}
	Beginning with $(1)$, suppose that the orbit of $p\in vE_A^\omega$ under the action of $H$ is free. Let $q$ be the shortest prefix of $p$ such that $g^{i}(q)\neq q$ for all $1\leq i< m$. It follows from Lemma \ref{Lem:fixed path} that $i\frac{B_q}{A_q}\notin \ZZ$ for all $1\leq i< m$. If $q$ is not a simple path, there exists cycles $c_j$ such that $q=w_1c_1w_2c_2\ldots w_n$ and $w:= w_1\ldots w_n$ is a simple path. Recall that by Proposition \ref{Prop: Infinite group characterization}, $\frac{B_{c_j}}{A_{c_j}}\in \ZZ$. 
	Thus, $w\in vE_A^*$ and $i\frac{B_{w}}{A_{w}}\notin \ZZ$ for all $1\leq i< m$, establishing $(2)$. Finally, applying Lemma \ref{Lem:fixed path}, we get that the orbit of $w$ under the action of $H$ is free, establishing $(3)$. It is immediate that $(3) \implies (1)$.
\end{proof}

\begin{Corollary} 
	\label{Cor: H supports singular characterization}
	Let $(G_B, E_A)$ be a faithful KEP-groupoid. Suppose that $H\leq (G_B)_v$ has finite order $m>1$. Then $H$ supports a non-zero singular element if and only if for every simple path $q\in vE_A^*$ with $B_q\neq 0$ we have
	$$\frac{A_q}{\gcd(A_q, |B_q|)}<m.$$
\end{Corollary}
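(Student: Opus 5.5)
The plan is to combine Corollary \ref{Cor: Subgroups without free orbits support singular elements} with Proposition \ref{Prop: Subgroups with free orbits characterization}. By the former, $H$ supports a non-zero singular element if and only if the action of $H$ on $vE_A^\omega$ has no free orbit. By the equivalence (1)$\iff$(2) of the latter, this happens if and only if for every simple path $w\in vE_A^*$ there is some $1\leq i<m$ with $i\frac{B_w}{A_w}\in\ZZ$. It therefore remains to rewrite this arithmetic condition, one simple path $w$ at a time, as the inequality in the statement.

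Fix a simple path $q\in vE_A^*$. If $B_q=0$, then $i\frac{B_q}{A_q}=0\in\ZZ$ for $i=1<m$, so such paths never witness a free orbit. This explains why the statement only quantifies over paths with $B_q\neq 0$. If $B_q\neq 0$, put $d_q:=\frac{A_q}{\gcd(A_q,|B_q|)}$. Writing $\frac{B_q}{A_q}$ in lowest terms shows it has denominator $d_q$, so $i\frac{B_q}{A_q}\in\ZZ$ exactly when $d_q\mid i$. Hence there exists $1\leq i<m$ with $i\frac{B_q}{A_q}\in\ZZ$ if and only if $d_q<m$, taking $i=d_q$ in one direction and using $d_q\leq i<m$ in the other.

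Putting these together: $H$ has no free orbit if and only if every simple path $q\in vE_A^*$ with $B_q\neq0$ satisfies $d_q<m$. This is exactly the claimed criterion. The step most likely to cause trouble is only a bookkeeping point: checking that Proposition \ref{Prop: Subgroups with free orbits characterization}(2) is stated for the fixed generator's powers $i$, so that $g^i$ fixing $w$ really corresponds to $i\frac{B_w}{A_w}\in\ZZ$ through Lemma \ref{Lem:fixed path}. This needs care if $H=\langle g\rangle$ with $g=g_v^n$ rather than $g_v$ itself. I would confirm the exponent convention used there. If the proposition implicitly uses exponents of $g_v$, I would instead note that the orbit-size computation on a simple path $w$ gives orbit size $\min(m,\,\cdot)$, determined by the order of $g$ modulo the stabilizer. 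I would then rederive the $d_q<m$ criterion directly from Lemma \ref{Lem:fixed path} applied to prefixes, as in the proof of that proposition.
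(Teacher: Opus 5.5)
Your argument is exactly the paper's proof. Corollary~\ref{Cor: Subgroups without free orbits support singular elements} reduces the claim to ``$H$ has no free orbit on $vE_A^\omega$''. Proposition~\ref{Prop: Subgroups with free orbits characterization} (1)$\Leftrightarrow$(2) turns this into a condition on simple paths. Your translation of ``$i\frac{B_q}{A_q}\in\ZZ$ for some $1\le i<m$'' into $\frac{A_q}{\gcd(A_q,|B_q|)}<m$ is correct.

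The gap is the cited equivalence. It is not valid, and in fact the corollary as stated is false. So neither the paper's proof nor your fallback can recover the $d_q<m$ criterion. There are two independent defects:
\begin{enumerate}
\item The exponent issue you flagged. If $H=\langle g_v^n\rangle$ with $n>1$, then $g^i=g_v^{in}$, so one must test $in\frac{B_{q'}}{A_{q'}}$ rather than $i\frac{B_{q'}}{A_{q'}}$.
\item A prefix issue you did not flag. Lemma~\ref{Lem:fixed path} requires $k\frac{B_{q'}}{A_{q'}}\in\ZZ$ for \emph{every prefix} $q'$ of $q$. Knowing that every $g^i$ moves $q$ therefore does not give $i\frac{B_q}{A_q}\notin\ZZ$ for all $i$, because different $i$ can be detected at different prefixes. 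This is precisely the unjustified sentence ``It follows from Lemma~\ref{Lem:fixed path} that\dots'' in the proof of Proposition~\ref{Prop: Subgroups with free orbits characterization}.
\end{enumerate}

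\emph{The ``if'' direction fails.} Take vertices $v,u,x$ with $A_{v,u}=2$, $B_{v,u}=1$, $A_{u,x}=3$, $B_{u,x}=2$, $A_{x,v}=1$, $B_{x,v}=0$, and all other entries $0$. The simple paths $q\in vE_A^*$ with $B_q\neq0$ are $v$, $e_{v,u,r}$ and $e_{v,u,r}e_{u,x,s}$, with $\frac{A_q}{\gcd(A_q,|B_q|)}$ equal to $1,2,3$. So $H=(G_B)_v$ has order $m=6$, and the statement predicts a non-zero singular element. But $g_v^k$ fixes $e_{v,u,r}e_{u,x,s}$ iff $2\mid k$ and $3\mid k$. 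Hence $H$ acts freely on these six paths, and by Proposition~\ref{Prop: free orbits prevent singular elements} it supports no non-zero singular element.

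\emph{The ``only if'' direction fails.} Take $A_{v,u_1}=4$, $A_{v,u_2}=6$, $B_{v,u_1}=B_{v,u_2}=1$, $A_{u_1,v}=A_{u_2,v}=1$, and all other entries $0$. Then $(G_B)_v\cong\ZZ/12$, and every infinite path in $vE_A^\omega$ has stabilizer $\langle g_v^4\rangle$ or $\langle g_v^6\rangle$. Both meet $H=\langle g_v^2\rangle$ (of order $m=6$) non-trivially. So $H$ supports the element of Proposition~\ref{Prop: pi(g) is singular and non-zero}, even though $q=e_{v,u_2,0}$ has $\frac{A_q}{\gcd(A_q,|B_q|)}=6\not<m$.

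What Lemma~\ref{Lem:fixed path} actually gives is $\Stab p=\langle g_v^{L_p}\rangle$, where $L_p=\lcm\{\frac{A_{p'}}{\gcd(A_{p'},|B_{p'}|)}\colon p'\text{ a prefix of }p\}$. Hence, with $N=|(G_B)_v|$, the subgroup $H=\langle g_v^{N/m}\rangle$ supports a non-zero singular element iff $\lcm(\frac{N}{m},L_p)\neq N$ for every finite $p\in vE_A^*$. Even this cannot be restricted to simple $p$. Take $A_{v,u}=2$, $B_{v,u}=1$, $A_{u,v}=1$, $B_{u,v}=2$, $A_{v,x}=3$, $B_{v,x}=1$, $A_{x,v}=1$, $B_{x,v}=0$, and all other entries $0$. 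Then the non-simple path $e_{v,u,0}e_{u,v,0}e_{v,x,0}$ has a free $(G_B)_v\cong\ZZ/6$-orbit, while no simple path does.
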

\begin{proof}
	By Corollary \ref{Cor: Subgroups without free orbits support singular elements}, $H$ supports a non-zero singular element if and only if the action of $H$ on $E_A^{\omega}$ has no free orbits. By Proposition \ref{Prop: Subgroups with free orbits characterization}, this occurs if and only if for all simple paths $q\in vE_A^*$ there exists some $1\leq i < m$ such that $i\frac{B_q}{A_q}\in \ZZ$ or equivalently, for all simple paths $q\in vE_A^*$ with $B_q\neq 0$ we have that $\frac{A_q}{\gcd(A_q, |B_q|)}<m$.
\end{proof}

Recall that on input $A, B$ and a simple cycle $c$, one can compute $|(G_B)_{\source(c)}|$ by Proposition \ref{Prop: Infinite group characterization} and thus $|G_c|$ by Remark \ref{Rem: G_c computable}. By Corollary \ref{Cor: H supports singular characterization} we can thus decide if $G_c$ supports any singular elements. In particular, by Theorem \ref{Thm: testing singular elements faithful} we can decide if $J_K=0$. We will see in Section \ref{Sec: complexity analysis} that this algorithm is in polynomial space.


\section{Space complexity analysis}

\label{Sec: complexity analysis}

In this section we analyze the space complexity of the algorithms described in Sections \ref{Sec: nonfaithful} and \ref{Sec: faithful}. Let $(A, B)$ be a Katsura pair, recalling that $A$ has no zero rows. In particular, we show that on input $(A,B)$, we can decide in polynomial space if the singular ideal of $K\G_{(\hat G_B, E_A)}$, respectively $K\G_{(G_B, E_A)}$ vanish.

By a polynomial space algorithm, we mean a (deterministic) Turing machine which runs in space bounded by some polynomial in the size of the input. (We will describe the effective procedure executed by the Turing machine.)

Throughout, let $\mathfrak m$ denote the largest entry of $A$ and $B$, and let $\mathfrak N$ denote the common dimension (i.e. number of rows) of $A$ and $B$.
We always assume that integers are encoded in binary, in particular the size of the input $(A,B)$ is logarithmic in $\mathfrak m$ and quadratic in $\mathfrak N$.

It is helpful to define the quotient of $E_A$ where we collapse all multiedges to a single edge, which we call the \emph{underlying simple graph} of $E_A$. We denote this by $\overline{E}_A$. Note that since $A_e$ and $B_e$ only depend on the source and range of $e$, they remain well-defined for edges and paths of $\overline{E}_A$, which we tacitly use. Crucially, $c$-stable subgroups depend only on the vertex sequence of $c$.

Recall that we identify the number of vertices of $E_A$ (and thus $\overline{E}_A$) by the set $[1, \mathfrak N]$. In particular, paths in the simple graph $\overline{E}_A$ can be naturally encoded as a sequence of vertices, and they are naturally ordered by lexicographic order. In the algorithms presented below, this is the encoding of paths we use, and whenever we cycle through all vertices and simple paths or cycles, we do so in lexicographic order, that is, by incrementation.

We will regularly use that a simple path or cycle in $\overline E_A^*$ traverses at most $\mathfrak N+1$ vertices, each of which is a number between $1$ and $\mathfrak N$. The path or cycle is uniquely determined by this sequence which can be stored in $O(\mathfrak N \log \mathfrak N )$ space; this is the encoding we use. Similarly, if $p$ is a simple path or cycle in $\overline E_A^*$ then $A_p$ and $B_p$ are bounded above by
$\mathfrak m^{\mathfrak N}$ and can be stored in $O(\mathfrak N\log \mathfrak m)$ space. All of these are polynomial in the size of the input, which has size at least $\mathfrak N+\log \mathfrak m$.

It is well-known that basic arithmetic operations, as well as the gcd and lcm can be computed in polynomial time (and thus polynomial space) and therefore we disregard the space these computations use.

As an important first step in understanding both faithful and non-faithful KEP-groupoids, we apply Proposition \ref{Prop: Infinite group characterization} to determine which isotropy groups act faithfully. Algorithm \ref{Alg: decide faithfulness} takes a Katsura pair $(A, B)$ and a vertex $v\in E_A^0$ as input and returns TRUE if the isotropy group $(\hat G_B)_v$ acts faithfully, and FALSE otherwise.
\begin{algorithm}
	\caption{Determining whether $(\hat G_B)_v$ acts faithfully}\label{Alg: decide faithfulness}
\begin{algorithmic}
	\Procedure{Faithful}{$A,B,v$}
	\ForAll{$p \in v\overline {E}_A^*$ simple paths with $B_p \neq 0$}
	\ForAll{$c \in \source(p)\overline{E}_A^*\source(p)$ simple cycles}
	\If{$\frac{B_c}{A_c} \notin \mathbb Z$} halt and \textbf{return} TRUE
	\EndIf
	\EndFor
	\EndFor
	\State \textbf{return} FALSE
	\EndProcedure
\end{algorithmic}
\end{algorithm}
\begin{Proposition}
	Algorithm \ref{Alg: decide faithfulness} runs in polynomial space of the size of $(A, B)$.
\end{Proposition}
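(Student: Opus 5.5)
The plan is to bound the memory held at any moment during a run of Algorithm \ref{Alg: decide faithfulness}. Nothing is ever stored except the current loop indices and a constant number of integers computed from them, so it suffices to show that each of these fits in space polynomial in $\mathfrak N+\log\mathfrak m$.

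First, the outer loop. It ranges over the simple paths $p\in v\overline E_A^*$, in lexicographic order of their vertex sequences. Each such path has at most $\mathfrak N+1$ vertices, so it is stored in $O(\mathfrak N\log\mathfrak N)$ space. To move to the next candidate, we increment the vertex sequence, viewed as a base-$\mathfrak N$ counter of bounded length. We then check that the result starts at $v$, that consecutive vertices $x,y$ satisfy $A_{x,y}\neq 0$, and that no vertex repeats. Each check needs only a couple of extra indices, which is $O(\log\mathfrak N)$ space. The condition $B_p\neq 0$ is checked by computing $B_p$, a product of at most $\mathfrak N$ entries of $B$. This number is bounded in absolute value by $\mathfrak m^{\mathfrak N}$, so it takes $O(\mathfrak N\log\mathfrak m)$ bits. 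Alternatively, one simply checks that each factor is nonzero.

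Second, the inner loop. It enumerates the simple cycles $c\in\source(p)\overline E_A^*\source(p)$ in the same way: a vertex sequence of length at most $\mathfrak N+1$ whose first and last vertices equal $\source(p)$ and whose intermediate vertices are distinct. This again costs $O(\mathfrak N\log\mathfrak N)$ space. We then compute $A_c$ and $B_c$, each of magnitude at most $\mathfrak m^{\mathfrak N}$. The test $\frac{B_c}{A_c}\notin\mathbb Z$ is a single divisibility check, i.e. computing $B_c \bmod A_c$. Integer multiplication and division of $O(\mathfrak N\log\mathfrak m)$-bit numbers run in polynomial time, hence in polynomial space, as noted earlier.

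Finally, we reuse space. After each iteration, the space used for $A_c$, $B_c$ and the intermediate checks is overwritten. The total space is therefore the sum of two path encodings, a constant number of $O(\mathfrak N\log\mathfrak m)$-bit integers, and the workspace of the arithmetic. This is polynomial in the input size, which is at least $\mathfrak N+\log\mathfrak m$. The running time may be exponential, because there are exponentially many simple paths, but space is what is claimed.

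The only step needing care is the enumeration itself. We must make sure we never store the set of all simple paths, but only generate them one at a time by incrementation. We must also make sure the validity checks, namely adjacency in $\overline E_A$ and distinctness of vertices, can be done in place with logarithmic auxiliary memory. Once that is in place, everything else is standard arithmetic on polynomially long integers.

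Correctness of the algorithm is not part of this proposition. It follows from Proposition \ref{Prop: Infinite group characterization}(1), together with the fact that $A_p$ and $B_p$ depend only on the vertex sequence, so enumerating over $\overline E_A$ suffices.
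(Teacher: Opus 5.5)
Your proposal is correct and follows essentially the same argument as the paper: you bound the working memory at any moment by the current path $p$ and cycle $c$ (each $O(\mathfrak N\log\mathfrak N)$ as vertex sequences) and the integers $B_p$, $A_c$, $B_c$ (each $O(\mathfrak N\log\mathfrak m)$ bits), with arithmetic done in polynomial space. Your extra detail on enumerating simple paths one at a time by incrementation, with in-place validity checks, makes explicit what the paper takes for granted.
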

\begin{proof}
	At any given time, what we need to store (in addition to $(A,B)$) is:
	\begin{itemize}
		\item the vertex $v$, which has size $O(\mathfrak N)$;
		\item the current path $p$ and cycle $c$, which have size $O(\mathfrak N \log \mathfrak N )$;
		\item  the variables $B_p, B_c$ and $A_c$, which have size $O(\mathfrak N\log \mathfrak m)$.
	\end{itemize}
	The claim follows.
\end{proof}
\subsection{KEP-groupoids}
\label{Sec: complexity analysis-faithful}
Denote by $\hat I_K$ and $\hat T_K$ the singular and tight ideals, respectively, of $KS_{(\hat G_B, E_A)}$. Recall that by Theorem \ref{Thm:unfaithful J=0}, an isotropy group $(\hat G_B)_v$ supports singular elements which are not tight if and only if the action of $(\hat G_B)_v$ is non-faithful and
\begin{enumerate}
	\item[(T1)]  there exist a simple path $r \in vE_A^*$ and a simple cycle $c \in \source(r) E_A^* \source(r)$ with $B_{rc} \neq 0$; and
	\item[(T2)]  for all $p\in vE_A^*$ there exists some $q\in \source(p)E_A^*$ such that $B_{pq}=0$.
\end{enumerate} Algorithm \ref{alg: decide simplicity unfaithful} decides the above conditions as outlined in Remark \ref{Rem: nonfaithful J=0 algorithm}, in particular it takes a Katsura pair $(A,B)$ as input and returns YES if $\hat I_K \setminus \hat T_K=\emptyset$ in $KS_{(\hat G_B, E_A)}$ and NO otherwise.
\begin{algorithm}
	\caption{Deciding if $\hat I_K \setminus \hat T_K=\emptyset$ for $(\hat G_B, E_A)$}\label{alg: decide simplicity unfaithful}
	\begin{algorithmic}
		\ForAll{$v \in {E}_A^0$}
			\If{\textsc{Faithful}$(A, B, v)=\text{ FALSE}$}\Comment{Checking condition (T1)}
				\ForAll{$r \in v\overline {E}_A^*$ simple paths with $B_r \neq 0$}
				\ForAll{$c \in \source(r)\overline {E}_A^*\source(r)$ simple cycles}
					\If{$B_c \neq 0$} T$1 \gets$ TRUE 
					\EndIf
				\EndFor
				\EndFor 
			\bigskip
				\If{T$1 = $ TRUE}	\Comment{Checking condition (T2)}
					\ForAll{$p \in v\overline {E}_A^*$ simple paths with $B_p \neq 0$}
					\ForAll{$q \in \source(p)\overline {E}_A^*$ simple paths}
					\If{$B_{q} = 0$} 
					\State \textbf{continue} with next $p$
					\EndIf
					\EndFor
					\State \textbf{continue} with next $v$
					\EndFor
					\State halt and \textbf{return} NO \Comment{We found $v$ satisfying Theorem \ref{Thm:unfaithful J=0}}
				\EndIf
			\EndIf
		\EndFor
		\State \textbf{return} YES
	\end{algorithmic}
\end{algorithm}

\begin{Proposition}
	\label{Prop: Alg1 poly space}
	
	Algorithm \ref{alg: decide simplicity unfaithful} runs in polynomial space of the size of $(A,B)$.
\end{Proposition}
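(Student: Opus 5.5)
The argument mirrors the proof for Algorithm \ref{Alg: decide faithfulness}: I will bound the working memory by listing everything the algorithm keeps, other than the read-only input $(A,B)$, at any moment of execution. The algorithm is a fixed number of nested loops, at most four deep. Each loop iterates in lexicographic order over one of three ranges: vertices, simple paths of $\overline E_A$, or simple cycles of $\overline E_A$. Each loop only needs its current loop variable in memory, since the next value is obtained by incrementation. So the key observation is that no list of paths is ever stored; only the current tuple $(v,r,c)$ or $(v,p,q)$ is kept.

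I will then itemize the stored data.
\begin{itemize}
\item The current vertex $v$, in $O(\log \mathfrak N)$ space (or $O(\mathfrak N)$, as before).
\item The current simple paths or cycles $r,c$ or $p,q$. Each is a vertex sequence of length at most $\mathfrak N+1$, so each takes $O(\mathfrak N\log\mathfrak N)$ space.
\item The products $B_r,B_c,B_p,B_q$. For simple paths these are bounded in absolute value by $\mathfrak m^{\mathfrak N}$, so each takes $O(\mathfrak N\log\mathfrak m)$ space.
\item The Boolean flag T$1$, in $O(1)$ space.
\end{itemize}
Deciding whether $B_{rc}\neq 0$ does not require forming the product: it is equivalent to $B_r\neq0$ and $B_c\neq0$, and the algorithm tests exactly these.

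The call to \textsc{Faithful}$(A,B,v)$ runs in polynomial space by the preceding proposition. Its workspace can be freed and reused once it returns, so the space used is the maximum, not the sum, of the subroutine's bound and the main loop's bound.

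The only step needing a moment's care is checking that the quantities stay polynomially bounded. This is where restricting to simple paths and cycles matters, and Theorem \ref{Thm:unfaithful J=0} together with Remark \ref{Rem: nonfaithful J=0 algorithm} already reduce (T2) to simple paths in this way. Without that restriction, the products $A_p,B_p$ along arbitrary paths could grow without bound. With it, every value computed has bit-length $O(\mathfrak N\log\mathfrak m)$.

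The arithmetic performed consists of products and tests for nonzeroness and integrality. These run in polynomial time, hence in polynomial space. Summing the contributions gives a total of $O(\mathfrak N\log\mathfrak N+\mathfrak N\log\mathfrak m)$ plus the polynomial bound for \textsc{Faithful}. This is polynomial in the input size, which is at least $\mathfrak N+\log\mathfrak m$.
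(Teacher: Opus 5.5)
Your proposal is correct and follows essentially the same approach as the paper. Both list what must be stored besides the input: the vertex $v$, the current simple paths and cycles $r,c,p,q$ in $O(\mathfrak N\log\mathfrak N)$ space, and the values $B_r,B_c,B_p,B_q$ in $O(\mathfrak N\log\mathfrak m)$ space. Your remarks on reusing the workspace of \textsc{Faithful} and on the $O(1)$ flag only make explicit what the paper leaves implicit.
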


\begin{proof}
	
	At any given time, what we need to store (in addition to $(A,B)$) is:
	\begin{itemize}
		\item the vertex $v$, which has size $O(\mathfrak N)$;
		\item the current paths $r,c, p$ and $q$, which have size $O(\mathfrak N \log \mathfrak N )$;
		\item  the variables $B_r, B_c, B_p$ and $B_q$, which have size $O(\mathfrak N\log \mathfrak m)$.
	\end{itemize}
	The claim follows.
	
	\begin{Remark}\label{Rem: nonfaithful simplicity algorithm}
		Recall that the simplicity of $K\G_{(\hat G_B, E_A)}$ and $C^{\ast}_r(\G_{(\hat G_B, E_A)})$ coincide by Corollary \ref{Cor: cstar simplicity iff steinberg simplicity}. Algorithm \ref{alg: decide simplicity unfaithful} can be extended to a polynomial space algorithm to decide simplicity of $K\G_{(\hat G_B, E_A)}$ by checking whether $\G_{(\hat G_B, E_A)}$ is minimal and topologically free.
		
		As discussed in Section \ref{Subsec: semigroup and groupoids algebras}, minimality is equivalent to condition (Cof) and topological freeness is equivalent to the conjunction of conditions (Evr) and (Cyc).
		
		Condition (Cof) is satisfied if $E_A$ has a unique nontrivial strongly connected component. This can be checked in polynomial time (and therefore polynomial space) by calculating $A^{\mathfrak N}$ (where $\mathfrak N$ is the dimension of $A$). Observe that zero columns of $A^{\mathfrak N}$ correspond to vertices from which a cycle is not reachable. Therefore condition (Cof) is equivalent to irreducibility of the square matrix corresponding to the vertices with non-zero columns in $A^{\mathfrak N}$. When the graph satisfies (Cof), condition (Cyc) is equivalent to the unique nontrivial strongly connected component having at least one more edge than vertex, which can be checked in polynomial time.
		
		Condition (Evr) is satisfied if whenever $g\in \hat G_B$ acts trivially on $\source(g)E_A^*$, there is some $p\in \source(g)E_A^*$ such that $g|_p=\source(p)$. By Lemma \ref{lem: fixed path computation}, this occurs if and only if whenever the action of $(G_B)_v$ fails to be faithful, there is some $p\in vE_A^*$ with $B_p=0$. Clearly $p$ may be chosen to be a simple path or cycle, so condition (Evr) can be checked in polynomial space.
	\end{Remark}
\end{proof}
\subsection{Faithful KEP-groupoids}
\label{Sec: complexity analysis-unfaithful}


Denote by $ I_K$ and $ T_K$ the singular and tight ideals, respectively, of $KS_{( G_B, E_A)}$. To decide if $\SingularNotTightIdeal=\emptyset$, we follow Theorem \ref{Thm: testing singular elements faithful}. More precisely, for every vertex $v$ such that $(G_B)_v$ is finite, and every simple cycle $c \in v\overline{E}_A^*v$, we compute the maximal $c$-stable subgroup $G_c$, and then need to determine if it has any free orbits using Proposition \ref{Prop: Subgroups with free orbits characterization} (2). 

Recall that for any vertex $v$, we can compute the order of $(G_B)_v$ using Proposition \ref{Prop: Infinite group characterization}. The procedure coded below returns the order of $(G_B)_v$ when it is finite, and $0$ otherwise. 

\begin{algorithm}
	\caption{Computing the order of $(G_B)_v$}\label{alg:order of isotropies}
	\begin{algorithmic}
		\Procedure{Order}{$A,B,v$}
		\If{\textsc{Faithful}$(A, B, v)=\text{ TRUE}$}
			\State \textbf{return} $0$
		\Else 
		\State $N \gets 1$
			\For{$p \in v\overline {E}_A^*$ simple path with $B_p \neq 0$}
				\State $N \gets \lcm\left(N, \frac{A_p}{\gcd(Ap,|B_p|)}\right)$		
			\EndFor
		
		\State \textbf{return} $N$
		\EndIf
		\EndProcedure
	\end{algorithmic}
\end{algorithm}

\begin{Proposition}
	\label{Prop: Alg2 poly space}
	
	Algorithm \ref{alg:order of isotropies} runs in polynomial space of the size of $(A,B)$, in particular, the output $N$ has size polynomial in the size of $(A,B)$.
\end{Proposition}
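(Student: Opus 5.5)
The plan is to follow the pattern of the earlier space analyses: list what Algorithm \ref{alg:order of isotropies} stores at any time, and bound each item. The key extra point is the size of the running value $N$. The call to \textsc{Faithful} is polynomial space by the proposition already proved for Algorithm \ref{Alg: decide faithfulness}, and that space can be reused afterwards. In the \textbf{else} branch the loop runs over simple paths $p \in v\overline{E}_A^*$ in lexicographic order. Storing the vertex $v$ takes $O(\mathfrak N)$ space and the current path $p$ takes $O(\mathfrak N\log\mathfrak N)$ space. The quantities $A_p$ and $|B_p|$ are at most $\mathfrak m^{\mathfrak N}$, so they take $O(\mathfrak N\log \mathfrak m)$ space. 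Their gcd and the quotient $\frac{A_p}{\gcd(A_p,|B_p|)}$ are bounded by $A_p$, so they also take $O(\mathfrak N\log\mathfrak m)$ space.

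The main obstacle is bounding $N$. A priori it is an lcm over exponentially many simple paths, which could suggest exponential bit length. I would bound it arithmetically instead. Each term $\frac{A_p}{\gcd(A_p,|B_p|)}$ divides $A_p$. Every prime power $\ell^{\alpha}$ dividing the lcm divides some single term, hence divides some $A_p \le \mathfrak m^{\mathfrak N}$, so $\alpha \le \mathfrak N\log_2\mathfrak m$. Every prime dividing some $A_p$ divides some entry of $A$, because $A_p$ is a product of at most $\mathfrak N$ entries of $A$. So the primes involved all divide $\prod_{u,w}A_{u,w} \le \mathfrak m^{\mathfrak N^2}$, and there are at most $\mathfrak N^2\log_2\mathfrak m$ of them. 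Combining these bounds,
$$\log_2 N \le (\mathfrak N^2\log_2\mathfrak m)\cdot(\mathfrak N\log_2\mathfrak m) = O(\mathfrak N^3\log^2\mathfrak m).$$
This is polynomial in the input size, which is at least $\mathfrak N+\log\mathfrak m$. In fact the same argument bounds every intermediate value of $N$, since each is an lcm over a subset of the terms.

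Alternatively, and more simply, $N$ divides $\prod_{u,w}A_{u,w}^{\mathfrak N}$. For each prime $\ell$, the exponent of $\ell$ in $N$ is at most $\max_p v_\ell(A_p)$. Since $p$ is simple, each edge pair $(u,w)$ appears at most once in $p$ (really, each vertex appears at most once), so $v_\ell(A_p) \le \sum_{u,w} v_\ell(A_{u,w})$. Hence $N$ divides $\prod_{u,w}A_{u,w}$, which gives $\log_2 N \le \mathfrak N^2\log_2\mathfrak m$. This is the bound I would actually use in the write-up. The lcm update on $N$ is a polynomial-time arithmetic operation on numbers of polynomial bit length, so it is polynomial space. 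This gives both the space bound and the claim that the output $N$ has polynomial size.
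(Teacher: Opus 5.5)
Your proposal is correct and is essentially the paper's proof: both list the stored data ($v$, the current simple path $p$, $A_p$, $B_p$ and $N$) and bound every value of $N$ by $\prod_{e\in\overline{E}_A}A_e\le\mathfrak m^{\mathfrak N^2}$; the paper states this bound without comment, and you justify it by noting that for a simple path $p$, $A_p$ is a product of distinct factors of this product. One small fix: take the product over edges of $\overline{E}_A$ (the nonzero entries of $A$) rather than over all pairs $(u,w)$, since otherwise the product can be $0$ and both your divisibility bound and your prime-counting bound become vacuous.
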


\begin{proof}

 At any given time, what we need to store (in addition to $(A,B)$) is:
	\begin{itemize}
		\item the vertex $v$, which has size $O(\mathfrak N)$;
		\item the current path $p$ which has size $O(\mathfrak N \log \mathfrak N )$;
		\item  the variables $A_p, B_p$, which have size $O(\mathfrak N\log \mathfrak m)$;
		\item the variable $N$ is bounded by $\prod_{e \in \overline{E}_A} A_e \leq \mathfrak m ^{{\mathfrak N}^2}$ so its size is $O({\mathfrak N}^2\log \mathfrak m)$.
	\end{itemize}
The claim follows.
\end{proof}

Next we consider the procedure outlined in Remark \ref{Rem: G_c computable}, which computes the order of the maximal $c$-stable subgroup of some cycle $c$ with $B_c \neq 0$, given the order $N$ of $(G_B)_v$. 

%

It is not immediately obvious, but computing the number $n$ as defined in (\ref{eq: max c-stable generator}) does not require prime factorization due to the following.

\begin{Lemma}
Let $M,k \in \mathbb Z^+$ and suppose $k \mid M$. Let $s_0=k$, and put 
$$s_{i+1}=\frac{s_i}{\gcd(s_i, \frac{M}{s_i})}.$$
Then $s_i$ is a decreasing, positive sequence of integers which eventually stabilizes at some value $s$. Furthermore, $n=\frac{M}{s}$ is the least integer with $\lcm(n,k)=M$.
\end{Lemma}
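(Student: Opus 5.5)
We work prime by prime, using $p$-adic valuations $v_p$. Write $\alpha=v_p(M)$ and $\beta_i=v_p(s_i)$. The goal is to track how each $\beta_i$ evolves and to compare the limiting value with the exponents $\gamma$ defined in (\ref{eq: max c-stable generator}).

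The first step is to check by induction that $s_i\mid M$ for all $i$. This holds for $s_0=k$ by hypothesis. If $s_i\mid M$, then $s_{i+1}$ divides $s_i$, so $s_{i+1}\mid M$ as well. In particular each $s_{i+1}$ is a positive integer dividing $s_i$, so the sequence is non-increasing. Since it consists of positive integers, it is eventually constant, say equal to $s$. At that point $\gcd(s,M/s)=1$.

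The second step is the local computation. For each prime $p$ we have
\[v_p\bigl(\gcd(s_i,M/s_i)\bigr)=\min(\beta_i,\alpha-\beta_i),\]
and therefore $\beta_{i+1}=\beta_i-\min(\beta_i,\alpha-\beta_i)$. There are two cases:
\begin{itemize}
\item if $\beta_i=\alpha$, the minimum is $0$, so $\beta_{i+1}=\alpha$;
\item if $\beta_i<\alpha$, then $\beta_{i+1}=\max(0,2\beta_i-\alpha)<\beta_i$ whenever $\beta_i>0$. Once $\beta_i=0$ it stays $0$.
\end{itemize}
So each valuation either stays at $\alpha$ forever or strictly decreases to $0$. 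Consequently the limit $s$ has $v_p(s)=\alpha$ if $v_p(k)=\alpha$, and $v_p(s)=0$ otherwise. This is consistent with the stabilization condition $\gcd(s,M/s)=1$ (equivalently, $\min(v_p(s),\alpha-v_p(s))=0$ for every $p$), and it identifies exactly which primes survive.

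The final step compares with (\ref{eq: max c-stable generator}). There, with $\beta=v_p(k)$, the least $n$ satisfying $\lcm(n,k)=M$ has $v_p(n)=0$ if $\beta=\alpha$ and $v_p(n)=\alpha$ otherwise. This is precisely $v_p(M/s)=\alpha-v_p(s)$ as computed above, so $n=M/s$.

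Minimality of $n$ needs a short justification. It was argued in Lemma \ref{Lem:cstable}, but we re-derive it: $\lcm(n,k)=M$ forces $\max(v_p(n),\beta)=\alpha$ for every $p$. When $\beta<\alpha$ this forces $v_p(n)=\alpha$, and otherwise $v_p(n)\geq 0$ is all that is required. Hence $M/s$ divides every valid $n$, and in particular it is the least one.

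The only delicate point is the case analysis in the second step, specifically that a valuation strictly below $\alpha$ never climbs back up. This is immediate from the formula $\max(0,2\beta_i-\alpha)$. As a practical remark, since $\beta_i\leq\alpha\leq\log_2 M$ and positive valuations strictly decrease, stabilization occurs within about $\log_2 M$ steps, which is what the complexity analysis will need.
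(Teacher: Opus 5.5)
Your proof is correct and follows the paper's argument: both fix a prime, track $v_p(s_i)$ through the recurrence $\delta_{i+1}=\delta_i-\min(\delta_i,\alpha-\delta_i)$, show that it stabilizes at $\alpha$ when $v_p(k)=\alpha$ and at $0$ otherwise, and compare this with the exponent formula from the proof of Lemma \ref{Lem:cstable}. The differences are minor. You explicitly check that $s_i\mid M$, which the paper leaves implicit. You also use the closed form $\max(0,2\beta_i-\alpha)$ to show that a valuation below $\alpha$ never returns to $\alpha$, whereas the paper gets this from monotonicity via $\beta=\delta_0\geq\delta_i=\alpha\geq\beta$.
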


\begin{proof}
Recall from Lemma \ref{Lem:cstable} that given prime factorizations $M=\prod p_i^{\alpha_i}$ and
$k=\prod p_i^{\beta_i}$,
then least integer with $\lcm(n,k)=M$ is $n=\prod p_i^{\gamma_i}$ where
$$\gamma_i=
\begin{cases}
0 &\hbox{if } \beta_i=\alpha_i\\
\alpha_i & \hbox{otherwise}.
\end{cases}$$ 

Fix a prime $p$, and let $\alpha$, $\beta$ and $\delta_i$ denote the exponents of $p$ in the prime decompositions of $M$, $k$ and $s_i$ respectively. Note that $\alpha \geq \beta$.
By definition we have $\delta_0=\beta$ and 
$$\delta_{i+1}=\delta_i-\min(\delta_i, \alpha-\delta_i).$$
We will show that $\delta_i$ is a decreasing sequence eventually stabilizing at
$$\delta=\begin{cases}
\alpha &\hbox{if } \beta=\alpha\\
0 & \hbox{otherwise}.
\end{cases}$$
It then follows that the exponent of $p$ in the prime decomposition of $\frac Ms$ is
$$\alpha-\delta=\begin{cases}
0 &\hbox{if } \beta=\alpha\\
\alpha & \hbox{otherwise}
\end{cases}$$ as needed.

For the claim about the sequence $\delta_i$, observe that $\delta_{i+1} \leq \delta_i$ by definition, and $\delta_i=\delta_{i+1}$ exactly if either $\delta_i=0$, or if $\delta_{i}=\alpha$. In the latter case, we have
$$\beta=\delta_0 \geq \delta_i =\alpha  \geq \beta$$
so we must have equality everywhere, thus $\delta_i=\alpha$ exactly if $\beta=\alpha$ and in this case $\delta=\alpha$, and otherwise we have $\delta=0$, which is what we had to prove.
\end{proof}

Combining the above lemma with Remark \ref{Rem: G_c computable}, we obtain Algorithm \ref{alg:order of cstables}, which on input $A,B,$ a simple cycle $c$ with $B_c \neq 0$, and the order $N$ of $(G_B)_{\source(c)}$, outputs the order of the maximal $c$-stable subgroup $G_c$.

\begin{algorithm}
	\caption{Computing the order of the maximal $c$-stable subgroup}\label{alg:order of cstables}
	\begin{algorithmic}
		\Procedure{c-StableOrder}{$A,B,c, N$} 
			\State $r \gets 1$ \Comment{Calculating $M=|\Stab c|$}
			\For{$c'$ prefix of $c$} 
				\State $r \gets \lcm(r, \frac{A_{c'}}{\gcd(A_{c'}, |B_{c'}|)})$			
			\EndFor
		\State $M \gets \frac{N}{r}$	
		\State $k \gets \frac{M}{\gcd\left(M, \frac{B_c}{A_c}\right)}$ \Comment{Calculating $k$ generating $\ker(|_c)$}	
		
		\State $s \gets k$ \Comment{Calculating $n$}
			\While{$s \neq \frac{s}{\gcd(s, M/s)}$}
			$s \gets \frac{s}{\gcd(s, M/s)}$
			\EndWhile
		\State $n \gets \frac{M}{s}$	
		\State $K \gets \frac{M}{n}$ \Comment{The order of $|G_c|$}
		\State \textbf{return} $K$
		\EndProcedure
	\end{algorithmic}
\end{algorithm}

\begin{Proposition}
	\label{Prop: Alg3 poly space}
	
	Algorithm \ref{alg:order of cstables} runs in space polynomial in the size of $(A,B)$, in particular, the output $K$ has size polynomial in the the size of $(A,B)$.
\end{Proposition}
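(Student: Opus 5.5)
The plan follows the pattern of the earlier space analyses: list every quantity Algorithm \ref{alg:order of cstables} stores at any moment and bound its bit-size by a polynomial in $\mathfrak N$ and $\log\mathfrak m$. One additional point is needed here, namely that the while loop terminates.

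The inputs are $A$, $B$, the simple cycle $c$ and $N$. The cycle $c$ is encoded as a vertex sequence of length at most $\mathfrak N+1$, so it uses $O(\mathfrak N\log\mathfrak N)$ space. By Proposition \ref{Prop: Alg2 poly space}, $N\le \mathfrak m^{\mathfrak N^2}$, so it uses $O(\mathfrak N^2\log\mathfrak m)$ space.

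In the first loop, each prefix $c'$ of $c$ is a simple path or simple cycle. Hence $A_{c'}$ and $B_{c'}$ are bounded by $\mathfrak m^{\mathfrak N+1}$, and the same holds for $A_{c'}/\gcd(A_{c'},|B_{c'}|)$. The running value $r$ is an lcm of at most $\mathfrak N+1$ such numbers. So $r\le \mathfrak m^{(\mathfrak N+1)^2}$, and in fact $r$ divides $N$ by Lemma \ref{lem:stabilizer size}; either bound gives polynomial size.

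The remaining variables are controlled by divisibility:
\begin{itemize}
\item $M=N/r$ divides $N$.
\item $B_c/A_c$ is an integer by Proposition \ref{Prop: Infinite group characterization}, since $(G_B)_v$ is finite, and $|B_c|\le\mathfrak m^{\mathfrak N+1}$.
\item $k=M/\gcd(M,B_c/A_c)$ divides $M$.
\item Each $s_i$ in the while loop divides $k$ and hence $M$.
\item $n=M/s$ divides $M$, and the output $K=M/n=s$.
\end{itemize}
So every stored integer is at most $\max(N,\mathfrak m^{(\mathfrak N+1)^2})$ in absolute value and has size $O(\mathfrak N^2\log\mathfrak m)$. All gcd, lcm and division steps run in polynomial time on such numbers, so they add only polynomial workspace.

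For termination I would use the preceding lemma. The sequence $s_{i+1}=s_i/\gcd(s_i,M/s_i)$ is a non-increasing sequence of positive divisors of $k$ that stabilizes. Moreover, whenever $s_{i+1}\neq s_i$ we have $s_{i+1}\le s_i/2$. The loop therefore runs at most $\log_2 k=O(\mathfrak N^2\log\mathfrak m)$ times. Space can also be reused between iterations, so termination alone suffices for the space bound.

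Correctness of the returned value, $K=|G_c|$, follows from Remark \ref{Rem: G_c computable} together with the preceding lemma, but only the space bound is being claimed here. The only mildly delicate step is keeping $r$ and $k$ polynomially sized, and the divisibility relations $r\mid N$ and $k\mid M\mid N$ handle this directly.
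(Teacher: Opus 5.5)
Your proposal is correct and follows essentially the same approach as the paper. Both list the stored quantities ($c$, $c'$, $N$, $A_c,B_c,A_{c'},B_{c'}$ and $r,M,k,s,n,K$) and bound each by $O(\mathfrak N^2\log\mathfrak m)$ bits; the paper bounds $r,M,k,s,n,K$ directly by $\prod_{e}A_e\le\mathfrak m^{\mathfrak N^2}$, while you use the divisibility chain $K=s\mid k\mid M\mid N$. Your explicit halving argument, which caps the while loop at $\log_2 k$ iterations, is a small detail the paper leaves implicit in the preceding lemma.
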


\begin{proof}
At any given time, we need to store (in addition to $(A,B)$):
\begin{itemize}
	\item  the cycle $c$ and its prefix $c'$, with size $O(\mathfrak N \log \mathfrak N)$;
	\item the variable $N$, which by Proposition \ref{Prop: Alg2 poly space} has polynomial size;
	\item the variables $A_c,B_c, A_{c'}, B_{c'}$, with size $O(\mathfrak N\log \mathfrak m)$;
	\item the variables $r, M, k, s,n, K$ are all bounded by $\prod_{e \in \overline{E}_A} A_e \leq \mathfrak m ^ {{\mathfrak N}^2}$ so their size is $O({\mathfrak N}^2\log \mathfrak m)$.
\end{itemize}
The claim follows.
\end{proof}

We put these two procedures together to test $\SingularNotTightIdeal=\emptyset$ by Theorem \ref{Thm: testing singular elements faithful}. We can decide if a subgroup of order $k$ contains a free orbit by Corollary \ref{Cor: H supports singular characterization}.

The following procedure takes a Katsura pair $(A,B)$ as input and outputs YES if $\SingularNotTightIdeal=\emptyset$ in $KS_{( G_B, E_A)}$ and NO otherwise.

\begin{algorithm}
	\caption{Deciding if $\SingularNotTightIdeal=\emptyset$ for $( G_B, E_A)$}\label{alg: decide simplicity}
	\begin{algorithmic}
		
		\ForAll{$v \in E_A^0$} 
			\If{\textsc{Order}$(A,B,v)\neq 0$} \Comment{Checking if $(G_B)_v$ is finite}
			\State $N \gets $\textsc{Order}$(A,B,v)$ \Comment{Computing $N=|(G_B)_v|$}
				\ForAll{$c \in v\overline E_A^*v$ simple cycles with $B_c \neq 0$}
					\State $K \gets $\textsc{c-StableOrder}$(A,B,c,N)$ \LongComment{Computing the order of the maximal $c$-stable subgroup}
					\ForAll{$q \in v \overline E_A^*$ simple paths} 
						\If{$\frac{A_q}{\gcd(A_q, |B,q|)} = K$}\Comment{Checking if the orbit of $q$ is free}
						\State \textbf{continue} with next $c$
						\EndIf
					\State  halt and \textbf{return} NO \LongComment{We found a $c$-stable subgroup with no free orbits}
					\EndFor
					
				\EndFor		
			\EndIf
		\EndFor
		
		\State \textbf{return} YES

	\end{algorithmic}
\end{algorithm}

\begin{Proposition}
	\label{Prop: Alg4 poly space}
	
	Algorithm \ref{alg: decide simplicity} runs in space polynomial in the size of $(A,B)$.
\end{Proposition}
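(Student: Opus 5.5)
We follow the same pattern as the proofs of Propositions \ref{Prop: Alg1 poly space}, \ref{Prop: Alg2 poly space} and \ref{Prop: Alg3 poly space}: we list every quantity Algorithm \ref{alg: decide simplicity} must keep in memory at any moment, and show each has size polynomial in $\mathfrak N+\log\mathfrak m$. Two facts make this work. The algorithm is a fixed nesting of loops over objects enumerated lexicographically by incrementation (vertices, simple cycles, simple paths). Each subroutine call returns a value that is stored, while the subroutine's workspace can be discarded afterwards. Total space is therefore the sum, not the product, of the costs of each loop variable and each subroutine call.

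The stored data is as follows. The outer vertex $v$ takes $O(\log\mathfrak N)$ (or $O(\mathfrak N)$) space. The call \textsc{Order}$(A,B,v)$ runs in polynomial space and its output $N$ has size $O(\mathfrak N^2\log\mathfrak m)$ by Proposition \ref{Prop: Alg2 poly space}. This call happens twice, once in the test and once in the assignment, which at most doubles the running time and does not affect space. The current simple cycle $c$ and simple path $q$ in $\overline E_A^*$ are stored as vertex sequences of length at most $\mathfrak N+1$, so $O(\mathfrak N\log\mathfrak N)$ each. The quantities $A_c,B_c,A_q,B_q$ are bounded by $\mathfrak m^{\mathfrak N}$, so $O(\mathfrak N\log\mathfrak m)$. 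The call \textsc{c-StableOrder}$(A,B,c,N)$ runs in polynomial space with polynomial-size output $K$ by Proposition \ref{Prop: Alg3 poly space}. Finally, the comparison $\frac{A_q}{\gcd(A_q,|B_q|)}=K$ is polynomial-time arithmetic on polynomial-size integers, and we disregard its cost as agreed above.

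Summing these bounds gives a polynomial, which proves the claim. The one point that needs justification is the reuse of space across nested calls. At any moment at most one subroutine call is active: either \textsc{Order}, with the \textsc{Faithful} call inside it, or \textsc{c-StableOrder}. Only the returned values $N$ and $K$ persist after a call ends. Since polynomials are closed under addition, the total space stays polynomial.
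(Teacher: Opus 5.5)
Your proposal is correct and takes essentially the same approach as the paper. Both list what is stored at any moment ($v$, $N$, $c$, $B_c$, $K$, $q$, $A_q$, $B_q$) and invoke Propositions \ref{Prop: Alg2 poly space} and \ref{Prop: Alg3 poly space} to bound the sizes of $N$ and $K$. You also note explicitly that only one subroutine call is active at a time and that only its output persists; the paper leaves this implicit.
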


\begin{proof}
	At any given time, we need to store (in addition to $(A,B)$):
	\begin{itemize}
		\item the current vertex $v$ which has size $O(\log \mathfrak N)$;
		\item the order $N$ of $(G_B)_v$, which by Proposition \ref{Prop: Alg2 poly space} is polynomial size;
		\item the current cycle $c$ with size $O(\mathfrak N \log \mathfrak N)$;
		\item the variable $B_c$ with size $O(\mathfrak N\log \mathfrak m)$;
		\item the variable $K$, which by Proposition \ref{Prop: Alg3 poly space} is of polynomial size;
		\item the current path $q$ with size $O(\mathfrak N \log \mathfrak N)$;
		\item the variables $A_q$ and $B_q$ with size $O(\mathfrak N\log \mathfrak m)$.
	\end{itemize}
	The claim follows.
\end{proof}
\begin{Remark}\label{Rem: faithful simplicity algorithm}
	Recall that the simplicity of $K\G_{(G_B, E_A)}$ and $C^{\ast}_r(\G_{(G_B, E_A)})$ coincide by Corollary \ref{Cor: cstar simplicity iff steinberg simplicity}. Algorithm \ref{alg: decide simplicity} decides whether the singular ideal vanishes in polynomial space. This can be extended to a polynomial space algorithm to decide simplicity of $K\G_{(G_B, E_A)}$ by checking whether $\G_{(G_B, E_A)}$ is minimal and topologically free. Minimality is equivalent to $E_A$ satisfying condition (Cof), and because the action is faithful, topological freeness is equivalent to condition (Cyc). These can be checked in polynomial space using the algorithms described in Remark \ref{Rem: nonfaithful simplicity algorithm}.
	
\end{Remark}

%
\section{Examples}
We restrict our examples to those defined on strongly connected graphs with at least two simple cycles so that conditions (Cof) and (Cyc) (see Section \ref{Subsec: semigroup and groupoids algebras}) are always satisfied. It follows that for these examples $\G_{(\hat G_B, E_A)}$ and $\G_{(G_B, E_A)}$ are always minimal, and because $(G_B, E_A)$ acts faithfully, $\G_{(G_B, E_A)}$ is always topologically free. Establishing when $\G_{(\hat G_B, E_A)}$ is topologically free depends only on determining whether $(\hat G_B, E_A)$ satisfies (Evr), which can easily be done using Lemma \ref{lem: fixed path computation}.


First we recall notation. In each example, we denote the singular ideal and tight ideal of $KS_{(\hat G_B, E_A)}$ by $\hat I_K$ and $\hat T_K$ respectively, while the singular ideals of $K\G_{(\hat G_B, E_A)}$ and $C^{\ast}_r(\G_{(\hat G_B, E_A)})$ are denoted by $\hat J_K$ and $\hat J$ respectively. We use $I_K, T_K, J_K$ and $J$ for the respective ideals associated to $(G_B, E_A)$. We begin with an example for which the singular ideals vanish.

\begin{Example}\label{Ex: cases (1) and (3) are uninteresting}
	Consider the matrices
	\[A=\begin{pmatrix}
		2&2&0\\
		1&0&2\\
		0&1&0
	\end{pmatrix}, 
	B=\begin{pmatrix}
		3&1&0\\
		0&0&1\\
		0&0&0
	\end{pmatrix},\]
	and refer to Figure \ref{fig: First example graph}.
	\begin{figure}[ht!]
		\centering
		\[\begin{tikzcd}
			{v_1} && {v_2} && {v_3}
			\arrow["3" description, Rightarrow, from=1-1, to=1-1, loop, in=145, out=215, distance=10mm]
			\arrow["0" description, bend left=20, from=1-1, to=1-3]
			\arrow["1" description, shift left=3, bend left=20, Rightarrow, from=1-3, to=1-1]
			\arrow["0" description, bend left=20, from=1-3, to=1-5]
			\arrow["1" description, shift left=3, bend left=20, Rightarrow, from=1-5, to=1-3]
		\end{tikzcd}\]
		\caption{The graph $E_A$ of Example \ref{Ex: cases (1) and (3) are uninteresting} edges labeled by $B$ values and edge multiplicity indicated by parallel stems.}
		\label{fig: First example graph}
	\end{figure}
	We begin by analyzing $(\hat G_B, E_A)$. Let $c$ be the loop at $v_1$. Observe that $\frac{B_{c}}{A_{c}}\notin \ZZ$, so by Proposition \ref{Prop: Infinite group characterization} the action of $(\hat G_B,)_{v_1}$ has a free orbit. It follows from Proposition \ref{Prop: free orbits prevent singular elements} that there are no nonzero singular elements in $K(\hat G_B)_{v_1}$. The isotropy groups $(\hat G_B)_{v_2}$ and $(\hat G_B)_{v_3}$ have non-faithful actions because each path $p$ from $v_1$ to either $v_2$ or $v_3$ has $B_p=0$, and $c$ is the only simple cycle with $\frac{B_c}{A_c}\notin \mathbb{Z}$. The isotropy groups $(\hat G_B)_{v_2}$ and $(\hat G_B)_{v_3}$ fail condition (T1) of Theorem \ref{Thm:unfaithful J=0}, so $\hat J_K=0=\hat J$. The graph is strongly connected and has more than one cycle so conditions (Cof) and (Cyc) are satisfied. It follows from Lemma \ref{lem: fixed path computation} that the existence of zero edges ensures that (Evr) is satisfied (see Remark \ref{Rem: nonfaithful simplicity algorithm}). We conclude that $\G_{(\hat G_B, E_A)}$ is minimal and topologically free so by Corollary \ref{Cor: cstar simplicity iff steinberg simplicity} and Proposition \ref{Prop: simple implies purely infinite}, $K\G_{(\hat G_B, E_A)}$ and $C^{\ast}_r(\G_{(\hat G_B, E_A)})$ are purely infinite simple.
	
	We turn our attention to the faithful KEP-groupoid $(G_B, E_A)$. Since $K\G_{(\hat G_B, E_A)}$ and $C^{\ast}_r(\G_{(\hat G_B, E_A)})$ are simple, they naturally have to coincide with $K\G_{(G_B, E_A)}$ and $C^{\ast}_r(\G_{(G_B, E_A)})$. Nonetheless, we explain how Section \ref{Sec: faithful} yields their simplicity. As we have seen, $K(G_B)_{v_1}$ supports no nonzero singular elements. Observe that for $i=2,3$, if $c\in v_iE_A^*$ is a cycle, then $B_{c}=0$. In particular, $c$-stable subgroups of $(G_B)_{v_2}$ and $(G_B)_{v_3}$ are trivial. Therefore $I_K=T_K$ by Theorem \ref{Thm: testing singular elements faithful} and $J_K=0=J$. Again simplicity follows from the minimality and topological freeness of $\G_{(G_B, E_A)}$.
\end{Example}

The previous example shows that $\hat J_K$ and $J_K$ may vanish simultaneously. It turns out that vanishing of $\hat J_K$ and $J_K$ are independent of each other, as the next example shows (see Remark \ref{Rem: example 2 remark}). This should not be too surprising given that the vanishing of $\hat J_K$ depends primarily on zero edges whereas vanishing of $J_K$ is determined by non-zero edges.
\begin{Example}\label{Ex: example 2}
	Consider the matrices
	\[A=\begin{pmatrix}
		1&1&1&1\\
		3&0&0&0\\
		2&0&0&0\\
		1&0&0&2
	\end{pmatrix}, 
	B=\begin{pmatrix}
	b&3&2&0\\
	2&0&0&0\\
	3&0&0&0\\
	0&0&0&3
	\end{pmatrix},\]
	 with $b\in \{0,1,2,3,4,5\}$ and refer to Figure \ref{fig: Examples Graphs}. Once again, we begin with $(\hat G_B, E_A)$. Observe that $\frac{B_c}{A_c}\in\ZZ$ for each cycle $c$ excluding the loop at $v_4$, so by Proposition \ref{Prop: Infinite group characterization} only $(\hat G_B)_{v_4}$ acts faithfully. Furthermore there are infinite paths from $v_1, v_2$ and $v_3$ avoiding zero edges so $v_1, v_2$ and $v_3$ satisfy condition (T1) of Theorem \ref{Thm:unfaithful J=0}. The graph is strongly connected and contains a zero edge so $v_1, v_2$, and $v_3$ also satisfy condition (T2) of Theorem \ref{Thm:unfaithful J=0}, and therefore $\hat J_K\neq 0\neq \hat J$. This obstructs the simplicity of $K\G_{(\hat G_B, E_A)}$ and $C_r^*(\G_{(\hat G_B, E_A)})$. We note that because $E_A$ is strongly connected, $\G_{(\hat G_B, E_A)}$ is minimal. Furthermore, condition (Cyc) is satisfied, and by Lemma \ref{lem: fixed path computation} the presence of zero edges guarantees that condition (Evr) is satisfied, so $\G_{(\hat G_B, E_A)}$ is topologically free.
	 
	 Moving now to the faithful KEP-groupoid, applying Proposition \ref{Prop: Infinite group characterization}, we find that the finite isotropy groups of $G_B$ have orders
	\[|(G_B)_{v_1}|=\lcm \{2, 3\}=6,\]
	\[|(G_B)_{v_2}|=\lcm \{1, 2\}=2,\]
	\[|(G_B)_{v_3}|=\lcm \{1, 3\}=3.\]
	Because of their prime orders, $(G_B)_{v_2}$ and $(G_B)_{v_3}$ have free orbits, and therefore do not support singular elements which are not tight by Proposition \ref{Prop: free orbits prevent singular elements}. 
	
	All that remains is $(G_B)_{v_1}$. Let $c$ be the loop at $v_1$, and let $g$ generate $(G_B)_{v_1}\cong C_6$. Observe that the map $|_c: (G_B)_{v_1}\rightarrow (G_B)_{v_1}$ is simply given by $g^k\mapsto g^{bk}$ by Lemma \ref{lem: fixed path computation}. Therefore, the maximal $c$-stable subgroup $G_c$ is trivial when $b=0$, has order 2 when $b=3$, and order $3$ when $b\in \{2,4\}$. In each of these cases, $G_c$ has a free orbit on $v_1E^1$ and thus does not support a non-zero singular element. When $b\in \{1,5\}$, we find that $G_c=(G_B)_{v_1}$ which has no free orbit on $v_1E^*$ by Proposition \ref{Prop: Subgroups with free orbits characterization}.
	
	Applying Theorem \ref{Thm: testing singular elements faithful}, we conclude that for $b\in \{0,2,3,4\}$, we have $I_k=T_K$ and therefore $J_K=0=J$. When $b\in \{1,5\}$, we find that $(1-g)(1-g^2)(1-g^3)\in K(G_B)_{v_1}$ is singular but not tight by the same Theorem, so $J_K\neq 0\neq J$.
	
	The graph is strongly connected with at least two cycles so $\G_{(G_B, E_A)}$ is minimal and topologically free. Therefore when $b\in \{0,2,3,4\}$, $K\G_{(G_B, E_A)}$ and $C_r^*(\G_{(G_B, E_A)})$ are purely infinite simple by Theorem \ref{Thm: testing singular elements faithful}, Corollary \ref{Cor: cstar simplicity iff steinberg simplicity} and Proposition \ref{Prop: simple implies purely infinite}. If instead $b\in \{1,5\}$, we find that the singular ideals do not vanish and so neither $K\G_{(G_B, E_A)}$ nor $C_r^*(\G_{(G_B, E_A)})$ are simple.
\end{Example}
\begin{figure}[ht!]
	\centering
	\[\begin{tikzcd}
		&&& {v_1} &&& \\
		{v_2} &&&&&& {v_3} \\
		&&& {v_4}
		\arrow["b" description, shift left, from=1-4, to=1-4, loop, in=55, out=125, distance=10mm]
		\arrow["2" description, shift right, bend right=20, Rightarrow, scaling nfold=3, from=1-4, to=2-1]
		\arrow["3" description, shift left, bend left=20, Rightarrow, from=1-4, to=2-7]
		\arrow["0" description, shift right, bend right=20, from=1-4, to=3-4]
		\arrow["3" description, shift right, bend right=20, from=2-1, to=1-4]
		\arrow["2" description, shift left, bend left=20, from=2-7, to=1-4]
		\arrow["0" description, bend right=20, from=3-4, to=1-4]
		\arrow["3" description, shift right, Rightarrow, from=3-4, to=3-4, loop, in=305, out=235, distance=10mm]
	\end{tikzcd}\]
	\caption{The graph $E_A$ of Example \ref{Ex: example 2} with edges labeled by $B$ and edge multiplicity indicated by parallel stems.}
	\label{fig: Examples Graphs}
\end{figure}
\begin{Remark}\label{Rem: example 2 remark}
	In the discussion before Example \ref{Ex: example 2}, we claim that the vanishing of $\hat J_K$ and $J_K$ are independent of each other. Our examples have shown all but one combination: Example \ref{Ex: cases (1) and (3) are uninteresting} displays $\hat J_K=0=J_K$ and Example \ref{Ex: example 2} displays $\hat J_K\neq0=J_K$ when $b\in \{1,2,3,4\}$ and $\hat J_K\neq0\neq J_K$ when $b\in\{1,5\}$. 
	
	An example exhibiting $\hat J_K=0\neq J_K$ is obtained by omitting $v_4$ (and adjacent edges) from Example \ref{Ex: example 2} and choosing $b\in \{1,5\}$. Let $(\tilde A, \tilde B)$ denote the resulting Katsura pair. The analysis of $(G_{\tilde B}, E_{\tilde A})$ exactly follows that of $(G_B, E_A)$ in Example \ref{Ex: example 2} and so $J_K\neq 0\neq J$, however, there are no zero edges in the graph, so by Theorem \ref{Thm:unfaithful J=0}, $\hat J_K=0 = \hat J$. In fact, $\G_{(\hat G_{\tilde B}, E_{\tilde A})}$ is Hausdorff by \cite[Thm. 18.6]{ExParSSGraphs}.
	
	Despite the singular ideals vanishing, $K\G_{(\hat G_{\tilde B}, E_{\tilde A})}$ and $C_r^\ast(\G_{(\hat G_{\tilde B}, E_{\tilde A})})$ must not be simple because they respectively quotient onto $K\G_{(G_{\tilde B}, E_{\tilde A})}$ and $C_r^\ast(\G_{( G_{\tilde B}, E_{\tilde A})})$ which are not simple because $J_K\neq 0\neq J$. Indeed, the non-faithful isotropies and the lack of zero edges imply that $(\hat G_{\tilde B}, E_{\tilde A})$ no longer satisfies (Evr) by Lemma \ref{lem: fixed path computation}, and therefore simplicity is obstructed because $\G_{(\hat G_{\tilde B}, E_{\tilde A})}$ fails to be topologically free.
\end{Remark}

\printbibliography
\end{document}